\documentclass{article}
\usepackage{dutchcal}
\usepackage{graphicx}
\usepackage{epstopdf}
\usepackage{amsmath,amsthm}
\usepackage{amssymb}
\usepackage{pgf}
\usepackage{tikz}
\usepackage{multicol}
\usepackage{mathtools}
\usepackage{titlesec}
\usepackage{bbm}
\usepackage{comment} 
\usepackage[linkcolor=blue,citecolor=red,colorlinks=true]{hyperref}
\usetikzlibrary{arrows,automata}
\newtheorem{thm}{\bf Theorem}[section]
\newtheorem{lem}[thm]{\bf Lemma}
\newtheorem{cor}[thm]{\bf Corollary}
\newtheorem{prop}[thm]{\bf Proposition}
\newtheorem{obs}[thm]{\bf Observation}

\theoremstyle{definition}

\newtheorem{rem}[thm]{\bf Remark}

\newcommand{\be}{\begin{eqnarray}}
\newcommand{\ee}{\end{eqnarray}}
\newcommand{\bd}{\begin{displaymath}}
\newcommand{\ed}{\end{displaymath}}
\newcommand{\la}{\lambda}

\newcommand{\mbb}[1]{\mathbb{#1}}

\newcommand{\mb}[1]{\boldsymbol{#1}}
\newcommand{\mc}[1]{\mathcal{#1}}

\newcommand{\mr}[1]{\mathring{#1}}
\newcommand{\p}{\partial}

\newcommand{\ov}[1]{\overline{#1}}

\usepackage{bbm}

\newcommand{\e}{\varepsilon}

\newcommand{\sem}[1]{(G_{#1}(t))_{t\geq 0}}

\title{Network transport with nonlinear dynamics at the nodes}
\author{Jacek Banasiak\footnote{Department of Mathematics and Applied Mathematics, University of Pretoria, Pretoria, South Africa}\,\,\footnote{Institute of Mathematics,  Łódź University of Technology, Łódź, Poland}\,\,\footnote{e-mail: jacek.banasiak@up.ac.za}\;\; \& 
 Adam B\l och\footnotemark[\value{footnote}]\,\,\footnote{e-mail: adam.bloch@p.lodz.pl}
 \footnotetext{Institute of Mathematics,  Łódź University of Technology, Łódź, Poland}
 }
\date{}

\begin{document}

\maketitle
\begin{abstract}
\noindent
In this paper, we consider a network transport model in which agents moving along the edges can contribute to dynamics at nodes or bypass them. The model takes the form of a system of first-order partial differential equations coupled with a system of ordinary differential equations, and can describe a range of phenomena, from diseases in metapopulations to migratory systems with delays, to cell differentiation processes, providing a unified platform that includes network transport and delay systems as particular cases. We prove the well-posedness of the model in $ L^p$ spaces, $ 1\leq p<\infty$, study long-term asymptotics, and illustrate the theory using an SIS disease in a metapopulation consisting of several sites where the disease develops, connected by routes along which the population can migrate, as an example.\\
\textbf{MSC Classification}: 35R02, 34C12, 35B40, 35F46, 35L50,34G20,  05C50  \\
\textbf{Key words:} $C_0$-semigroups, positive semigroups, semilinear perturbations, network transport, systems with delay, Metzler matrices, epidemological models in metapopulations
\end{abstract}

\section{Introduction}
Transport on networks has received considerable attention in the last several years, see e.g., \cite{BFNM3AS, BPChapt, batkai2017positive, KraPhysD, KSMZ, KrPuch, Mugnolo, Pucharx, Sik}. In its basic form, the network is represented by a metric directed graph (digraph)  $\mbb G$, consisting of $k$ vertices $\nu_1,\ldots,\nu_k,$ and $m$ arcs (directed edges), $e_1,\ldots,e_r,\ldots,e_m$, identified with the interval $[0,1]$, with $0$ corresponding to the tail and $1$ to the head of the arc. Denote by $I_e=\{1,\ldots,m\}$ the set of edges' indices and by $I_v=\{1,\ldots,k\}$ the set of nodes' indices. We assume that along each arc $e_r$, agents (particles, individuals) move at speed $c_r(x)>0$ from $0$ to $ 1$. We assume that the agents are continuously distributed on each $e_r$ and are described by the density $u_r(t,x), t\geq 0, x\in [0,1]$. Thus, we represent the density function on $\mbb G$ by the   vector function $(t.x)\mapsto \mb u(t,x)=(u_r(t,x))_{r\in I_e}.$ 
\begin{figure}[h]
\begin{center}
\begin{tikzpicture}[scale=0.6]
\node[draw,circle](N1) at (6,0)   {$\textcolor{black}{\nu_1}$};
\node[draw,circle](N2) at (10,0)   {$\textcolor{black}{\nu_2}$};
\node[draw,circle](N3) at (10,-4)   {$\textcolor{black}{\nu_3}$};
\node[draw,circle](N4) at (6,-4)   {$\textcolor{black}{\nu_4}$};
\draw[->,>=latex](N1) to [bend right=-20]  (N2);
\draw[->,>=latex](N1) to [bend right=-20]  (N3);
\draw[->,>=latex](N2) to [bend right=-20]  (N3);
\draw[->,>=latex](N4) to [bend right=-20]  (N1);
\draw[->,>=latex](N3) to [bend right=-20]  (N4);
\draw[->,>=latex](N4) to [bend right=-20]  (N3);
\draw[->,>=latex](N3) to [bend right=-20]  (N1);
\end{tikzpicture}
\end{center}
\caption{Digraph describing possible migrations between 4 patches}
\label{Assgraph}
\end{figure}
Then the flow along the edges is governed by the first-order system 
\begin{subequations}\label{tr1}
\begin{equation}
\mb u_t = -(\mc C(x)\mb u)_x, \quad x\in (0,1), t>0,
\label{tr1a}
\end{equation}
where
\begin{equation}\mc C(x) = \operatorname{diag}\,\mb c(x) = \left(\begin{array}{ccc}c_1(x)&\ldots&0\\
\vdots&\ddots&\vdots\\
0&\ldots&c_m(x)
\end{array}\right),\label{Cdef}
\end{equation}
and $c_r$ are absolutely continuous functions and there are $c_{\min},c_{\max}$ such that $0<c_{\min}\leq c_r(x)\leq c_{\max}<\infty, x\in [0,1],r\in I_e$, with the initial condition
\begin{equation}
\mb u(0,x)=\mr {\mb u}(x), \quad x\in (0,1).
\label{tr1b}
\end{equation}
This problem is supplemented by a boundary condition relating the outgoing flow to the incoming one, see, e.g., \cite[Section 1.1.5.1]{BaCo}. In general, it can be written as 
\begin{equation}\label{tr1c}
\mb u(t,0) = \mc G\mb u(t,1), \quad t>0,
\end{equation}
\end{subequations}
where, in the particular case of Kirchhoff's law at the nodes, $\mc G$ is related to the adjacency matrix of the line graph of $\mbb G$ (see Section \ref{sec2} for more details), \cite{BaFa}, but in age-structured cell mutation models, it can be arbitrary, \cite{BanRot, BPChapt}.

A natural setting for \eqref{tr1} is $L^1((0,1), \mbb C^m)$ as the norm of a (positive) solution gives the total population on $\mbb G,$ and then taking the values of $\mb u$ at $x=0,1,$ must be interpreted as trace operators. However, the $ L^p$ theory was developed in \cite{JBAB1}, see also \cite{Zwart} for another approach.

System \eqref{tr1a} can be converted into a constant speed problem by the change of variables
   $$\bar u_r(t,y_r) = c_r(x)u_r(t,x), \quad y_r = \frac{1}{\tau_r}\int_{0}^{x}\frac{ds}{c_r(s)} \in (0,1), \quad r\in I_e,$$
 and the new speed $\bar c_r$ on $e_r$ is given by
  \begin{equation}
 \frac{1}{\bar c_r} =  \tau_r :=\int_{0}^{1}\frac{ds}{c_r(s)},\label{taur}
 \end{equation}
 where $\tau_r$ is the traverse time of $e_r$, e.g., \cite{BPChapt}. Hence, from now on, we assume that $c_r, r\in I_e,$ does not depend on $x$.

Problem \eqref{tr1} has been extensively studied in the literature listed at the beginning of the section, and it has been generalised in various ways. The most relevant extension from this paper's viewpoint was introducing dynamic boundary condition by replacing $\mb u(t,0)$ in \eqref{tr1c} with $\p_t\mb u(t,0)$, \cite{Sik}, and its solution by introducing an artificial dynamics at the nodes that converted \eqref{tr1} into a matrix semigroup problem, as proposed in \cite{Nic}. This approach can be extended to a much larger class of network migration problems, which we shall discuss in detail below. Here, we provide a brief introduction to motivate the model. 

Suppose that, in addition to agents travelling along the edges according to \eqref{tr1a}, each site $\nu_i, i\in I_v,$ is inhabited by a resident population of agents $v_i$. The total population $\mb v = (v_1,\ldots,v_k)$ evolves according to a certain law $\mb v'= \mc A \mb v +\mc Q(\mb v)$, where we allow the population at one site to be instantaneously influenced by other sites independently of the existing edges, see e.g. \eqref{Mar} below.  Upon arrival at $\nu_i$, the incoming agents can either enter the site and interact with the resident population, or bypass it, travelling along the outgoing edges.  The outgoing edges can also be populated by agents leaving the site. We can model this scenario using the system
\begin{subequations}
    \label{mod1}
    \begin{equation}
    \mb u_t= - \mc C \mb u_x, \quad x\in (0,1), t>0,\label {mod1a}
    \end{equation}
    \begin{equation}
    \mb v_t = \mc A \mb v +\mc Q(\mb v) + \mc B\gamma_1 \mb u,\quad t>0, \label{mod1b}
    \end{equation}
    \begin{equation}
    \gamma_0 \mb u = \mc  F\mb v +\mc G\gamma_1 \mb u, \quad t>0,\label{mod1c}
    \end{equation}
    \begin{equation}
    \mb u(x,0) = \mr{\mb u}(x), \quad x\in (0,1),\label{mod1d}
    \end{equation}
    \begin{equation}
    \mb v(0) = \mb {\mr v}, \label{mod1e}
    \end{equation}
\end{subequations}
where $\mb u \in L_1((0,1),\mbb C^m)$, $[0,\infty)\ni t\mapsto \mb v(t)\in \mbb C^k,$
$\mc A = (a_{ij})_{i,j\in I_v}, \mc B = (b_{ij})_{i\in I_v,j\in I_e}, \mc F = (f_{ij})_{i\in I_e,j\in I_v}$ and $\mc G = (g_{ij})_{i,j\in I_e}$,
are constant matrices with real entries, $\mc Q:\Omega \to \mbb R^k,$ $\Omega \in \mbb R^k,$ is a sufficiently regular function, and $\gamma_0, \gamma_1 $ are trace operators defined for $\mb u \in W^1_1([0,1],\mbb C^m)$ by $
\gamma_0 \mb u = \mb u(0)$ and $\gamma_1 \mb u = \mb u(1).$
As with \eqref{tr1}, our basic space is 
$$
\mbb X_1 := L^1((0,1),\mbb C^m)\times l^1(\mbb C^k),
$$
with 
\begin{equation*}
\|(\mb u,\mb v)\|_{\mbb X_1} = \sum\limits_{j=1}^m\int_0^1|u_j(x)|dx + \sum\limits_{i=1}^k|v_i|.
\end{equation*}
 By $\mbb X_{1,+}$ we denote the positive cone of $\mbb X_1.$ We note that $\mbb X_1$ is an AL-space, so, in particular, the norm is additive on $\mbb X_+.$

We note that, in principle, $\mc C, \mc B, \mc F$ and $\mc G$ can also be nonlinear, see \eqref{Mar}. In this paper, however, we mostly focus on linear models ($\mc Q=0$), but return to \eqref{mod1} in Section \ref{secnon}.  

We note that even the linear version of \eqref{mod1} generalises previous models. It differs from the pure transport model \eqref{tr1a}, \eqref{tr1c} by the presence of the node dynamics \eqref{mod1b} and the corresponding unbounded boundary perturbation $\mc F\mb v$ in \eqref{mod1c}. It is also different from the model studied in \cite{Sik} and, in general, linear delay models (see Section \ref{secmd}), as it allows for a richer network structure with multiple delays between nodes and opens a possibility of adding additional dynamics during transit between them. It also 
contains an unbounded boundary perturbation $\mc G\gamma_1\mb u$ in \eqref{mod1c}. Such boundary perturbations can also be studied using the so-called Staffans--Weiss method, see \cite{HaddRhandi, Staff, Weiss}, as in e.g., \cite{BatKraRh}, but our approach through direct estimates of the resolvent is more elementary and, besides, the explicit resolvent formula allows us to derive additional results concerning the time asymptotics of the problem.

The paper is organised as follows. In Section \ref{sec2}, we recall graph-theoretical tools used in the paper. Section \ref{sec3} is devoted to two concrete applications that can be cast in the form \eqref{mod1}: a migration model and a cell differentiation model. For the former, we determine conditions under which \eqref{mod1} can be written as a system of ordinary differential equations with delay.  In Section \ref{exist}, we prove the existence of a $C_0$-semigroup $\sem{}$ solving the linear version of \eqref{mod1}, first in $\mbb X_1$ by estimating the resolvent, and next in the $L^p$ setting, by noticing that the semigroup there can be obtained as the restriction of the $\mbb X_1$ semigroup. Section \ref{sec5} is devoted to the time asymptotics of $\sem{}$. First, by extending the approach of \cite{Sik}, we show the conditions for the asymptotic stability when $\sem{}$ is positive,  and next, we generalise the result to arbitrary semigroups. In particular, we find conditions for stochasticity of $\sem{}$ when \eqref{mod1} describes a pure migration process. Finally, in Section \ref{secnon}, we provide well-posedness and stability results for the full version of \eqref{mod1}, and for illustration, we consider an SIS disease in a metapopulation described by $\mbb G,$ and find conditions for asymptotic stability of the disease-free equilibrium, using the methods developed in Section \ref{sec5}. 
\section{Graph theory toolbox}\label{sec2}
As mentioned earlier, we consider a directed graph $\mbb G$ with vertices $V=\{\nu_i\}_{i\in I_v}$ and oriented edges (arcs) $E=\{e_j\}_{j\in I_e}.$ We note that we do not consider loops (arcs from a vertex to itself) and multiple arcs.  
A related concept is that of the line graph of $\mbb G$, denoted by 
$L(\mbb G)$, and defined as the digraph whose vertices are identified with the arcs of $\mbb G$, $\{z_j\}_{j\in I_e}=\{e_j\}_{j\in I_e},$ and an arc in $L(\mbb G)$ between $z_i$ and $z_j$ occurs whenever there is a vertex $\nu_r$ of $\mbb G$ which is the head of $e_i$ and the tail of $e_j.$

We introduce two partitions of $I_e,$ 
\begin{equation*}
I_e = \{E_1^-,\ldots,E_k^-\},\quad I_e=\{E_1^+,\ldots,E_k^+\},
\end{equation*}
where, for $i\in I_v,$ $E^\pm_i$ is the set of indices of the arcs outgoing from ($-$) and incoming to ($+$) the node $\nu_i$. We note that  $E^-_i=\emptyset$ if $\nu_i$ is a sink (a vertex with no outgoing edges) and $E^+_i=\emptyset$ if $\nu_i$ is a source (a vertex with no incoming edges); if $\mbb G$ is strongly connected, then none of the sets $E_i^\pm$ is empty. We  note that, since any arc has exactly one tail and one head, 
\begin{equation}
E^\pm_i\cap E^\pm_j =\emptyset, \quad i,j\in I_v, i\neq j.
\label{Eij}
\end{equation}

We recall that the incoming incidence matrix $\Phi^+ = (\phi^+_{ij})_{ i\in I_v,j\in I_e}$ and  the outgoing incidence matrix $\Phi^-=(\phi^-_{ij})_{i\in I_e,j\in I_v}$ of $\mathbb{G}$ are defined by 
$$\phi_{ij}^+ = \left\{\begin{array}{ll} 1,&\text{if}\;j\in E^+_i\\
0,&\text{otherwise}
\end{array}\right.,
\quad \phi_{ij}^- = \left\{\begin{array}{ll} 1,&\text{if}\;i\in E^-_j\\
0,&\text{otherwise}
\end{array}\right.
.$$
By \eqref{Eij}, there is exactly one positive entry, equal to 1, in each column of $\Phi^+.$ Similarly, each row of $\Phi^-$ can contain at most one non-zero entry.

We observe that 
$$\ov{\mc N}=(\ov{\mc n}_{ij})_{i,j\in I_e} = \Phi^-\Phi^+,$$
is the adjacency matrix of $L(\mbb G),$ and $\overline{\mc M}= \Phi^+\Phi^-$ is the adjacency matrix of $\mbb G$. 
We note that not every nonnegative matrix can be the adjacency matrix of a line graph, \cite{BaFa, bein, Bang}. 
In particular, we shall need the following property.
\begin{lem}\label{lemn}
We have $\ov{\mc n}_{ij}=1$ if and only if there is $r\in I_v$ such that $e_i$ is outgoing from and $e_j$ incoming to $v_r$. In this case,
\begin{equation*}
\ov{\mc n}_{ij}=\phi^-_{ir}\phi^+_{rj}.
\end{equation*}
\end{lem}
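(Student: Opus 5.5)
The plan is to expand the matrix product entrywise and use the disjointness property \eqref{Eij} to see that at most one term of the resulting sum can be nonzero. By definition of $\ov{\mc N}=\Phi^-\Phi^+$,
\begin{equation*}
\ov{\mc n}_{ij}=\sum_{r\in I_v}\phi^-_{ir}\phi^+_{rj},\qquad i,j\in I_e .
\end{equation*}
Each factor is $0$ or $1$. We have $\phi^-_{ir}=1$ exactly when $i\in E^-_r$, that is, when $e_i$ is outgoing from $\nu_r$. Likewise $\phi^+_{rj}=1$ exactly when $j\in E^+_r$, that is, when $e_j$ is incoming to $\nu_r$. Hence the $r$-th summand equals $1$ precisely when $\nu_r$ is simultaneously the tail of $e_i$ and the head of $e_j$, and equals $0$ otherwise.

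The key step is to show that at most one summand is nonzero, so that $\ov{\mc n}_{ij}\in\{0,1\}$ and the sum collapses to a single product. For this I will use \eqref{Eij}: the sets $E^-_r$, $r\in I_v$, are pairwise disjoint, since every arc has exactly one tail. Equivalently, each row of $\Phi^-$ has at most one entry equal to $1$. Thus, for fixed $i$, there is at most one $r$ with $\phi^-_{ir}=1$. The same conclusion would also follow from the column property of $\Phi^+$ for fixed $j$.

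Consequently $\ov{\mc n}_{ij}=1$ if and only if such an $r$ exists, with $e_i$ outgoing from $\nu_r$ and $e_j$ incoming to $\nu_r$. For that unique $r$, all the other summands vanish, which gives $\ov{\mc n}_{ij}=\phi^-_{ir}\phi^+_{rj}$. Conversely, if no such $r$ exists, every summand is $0$, so $\ov{\mc n}_{ij}=0$.

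The argument involves no real obstacle. The only point requiring care is keeping track of the index conventions: $\Phi^-$ is indexed by (edge, vertex) and $\Phi^+$ by (vertex, edge). The "outgoing/incoming" roles of $e_i$ and $e_j$ must be read off directly from these definitions, as stated in the lemma.
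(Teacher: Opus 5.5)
Your proof is correct and follows essentially the same route as the paper's. Both arguments expand $\ov{\mc n}_{ij}=\sum_{r}\phi^-_{ir}\phi^+_{rj}$ and use the fact that, by \eqref{Eij}, each row of $\Phi^-$ has a single nonzero entry, so at most one summand survives. The only difference is that the paper cites both the row property of $\Phi^-$ and the column property of $\Phi^+$, while you rightly note that either one alone is enough.
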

\begin{proof}
We observe that $\Phi^-$ has only one positive entry $\phi^-_{ir}$ in each row and $\Phi^+$ has only one positive entry $\phi^+_{rj}$ in each column, with $i$ corresponding to the edge $e_i$ outgoing from $\nu_r$ and $j$ corresponding to the edge $e_j$ incoming to $\nu_r$. Thus, in the product $\Phi^-\Phi^+$, we will have either zeroes or a single product of nonzero entries in $\Phi^+$ and $\Phi^-.$ \end{proof}

\section{Examples of application}\label{sec3}

\subsection{Population migrations with delays}\label{secmd}
Consider a population divided into $k$ sub-populations, residing at different geographical sites, identified with the vertices $\{\nu_i\}_{i\in I_v}$. We assume that the changes in the population only occur through individuals' migrations between the sites. Let $v_i(t), i\in I_v,$ be the size of the sub-population at $\nu_i$ and at time $t$, and define $\mb v(t)=(v_1(t),\ldots, v_k(t)).$
For each $i\neq j$, let  $\mc m_{ij}\geq0$ be the migration rate from $\nu_j$ to $\nu_i$ and let us define the total migration rate from site $j$ as
$$\mc m_{jj}:=-\sum_{\substack{i=1\\ i\neq j}}^k \mc m_{ij}, \quad j\in I_v.$$  Then, the migration matrix is given by
$$
\mc M:=\left( \mc m_{ij}\right)_{i,j\in I_v} =: \operatorname{diag} (\mc m_{ii})_{i\in I_v} + \mc M_0 =: -\mc M_d +\mc M_0.
$$
We see that it is a Kolmogorov matrix, that is, its columns sum to zero. 

In reality, the migration between different sites does not occur instantaneously, as it takes some time to travel between them.  Hence, introducing delays in the system should give a more realistic picture. Thus, we consider, for $t>0$,
\begin{equation}\label{DE}
    v'_i(t)=\mc m_{ii}v_i(t)+\sum_{\substack{j=1\\j\neq i}}^k \mc m_{ij}v_j(t-\tau_{ij}),\quad i\in I_v,
\end{equation}
 where $\tau_{ij}$ is the time needed to travel from $\nu_j$ to $\nu_i$. 
 
 On the other hand, the migration process, taking into account the time needed to travel between the sites, can be described by \eqref{mod1}. Here, $c_j = \frac{1}{\tau_j} = \frac{1}{\tau_{rs}}, j \in E_s^-\cap E_r^+,$ that is, $\tau_j$ is the time needed to travel along the arc $e_j$ from $\nu_s$ to $\nu_r$ with speed $c_j.$ We emphasise that, though in general we can consider vector dynamics at the nodes and also instantaneous interactions between them, here we consider only scalar equations for $v_i$ at each node $\nu_i, i\in I_v$, and the effect of migration along the edges at them, and hence we assume $\mc G=0$.
 
Thus, we consider \eqref{mod1} with $\mc Q=0$, arbitrary $\mc B$ and $\mc F$, and $\mc G=0.$
 For any vector function $\mb \phi(t)=(\phi_1(t),\ldots,\phi_m(t)),$ we define  
$$\mb \phi_{\mb \tau}(t) :=\left(\begin{array}{c} \phi_1(t-\tau_1)\\\vdots\\\phi_m(t-\tau_m)\end{array}\right).$$
If $\mb u$ is a solution to \eqref{mod1a}, then, for  $j\in I_e$, 
\begin{equation}u_j(t,x) = \mc u_j(t-c_j^{-1}x) =\mc u_j(t-\tau_j x)
\label{del}
\end{equation}
for some functions $\mc u_j$, that is, $$
\mb u(t,1)=\mb  u_{\mb \tau}(t,0).
$$
Since some traverse times can equal each other, we partition $\{\tau_j\}_{j\in I_{{ e}}}$  into $ l, 1\leq l \leq m,$ sets $T_r,$ 
where for each $r,$  $\tau_i=\tau_j$ if  $\tau_i,\tau_j\in T_r.$ By renumbering the edges, we assume $\tau_r \in T_r$.
Define 
$$
I_r:=\{i:\; \tau_i\in T_r\},\quad \text{and} \quad 
I_{i,j} :=\{s\in I_e:\; b_{is}f_{sj} \neq 0\}. 
$$
\begin{prop}\label{prop0}
Let $(\mb u,\mb v)$ be a classical solution to \eqref{mod1} with $\mc G=0$ and arbitrary $\mc B$ and $\mc F$. Then $\mb v$ satisfies  \eqref{DE} with some matrix $\mc M_0$ if and only if 
for any $i,j\in I_{v}$ there is $r \in \{1,\ldots,l\}$ such that $I_{i,j}\subset I_r$. 
\end{prop}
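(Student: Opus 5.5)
We substitute the characteristic representation into the node equation and compare the result with \eqref{DE}. With $\mc G=0$, \eqref{mod1c} reads $\gamma_0\mb u(t)=\mc F\mb v(t)$, so $u_s(t,0)=\sum_{j\in I_v} f_{sj}v_j(t)$. By \eqref{del}, $u_s(t,1)=u_s(t-\tau_s,0)$ for $t>\tau_s$. Inserting this into \eqref{mod1b} with $\mc Q=0$ gives, for $i\in I_v$,
\begin{equation*}
v_i'(t)=\sum_{j\in I_v}a_{ij}v_j(t)+\sum_{j\in I_v}\sum_{s\in I_{i,j}} b_{is}f_{sj}\,v_j(t-\tau_s).
\end{equation*}
Grouping the inner sum by the classes $T_r$, it equals $\sum_{r=1}^l \beta^r_{ij}v_j(t-\tau_r)$ with $\beta^r_{ij}=\sum_{s\in I_{i,j}\cap I_r}b_{is}f_{sj}$. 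So $\mb v$ satisfies a multi-delay system. The question is whether it has the single-delay-per-pair form \eqref{DE}, where the diagonal term is $\mc m_{ii}v_i(t)$, off-diagonal terms appear only as $\mc m_{ij}v_j(t-\tau_{ij})$, and $\mc A=-\mc M_d$.

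\emph{Sufficiency.} Assume $I_{i,j}\subset I_{r(i,j)}$. Then all $s\in I_{i,j}$ share the delay $\tau_{ij}:=\tau_{r(i,j)}$, so the inner sum collapses to $\mc m_{ij}v_j(t-\tau_{ij})$ with $\mc m_{ij}:=(\mc B\mc F)_{ij}$. If $I_{i,j}=\emptyset$, set $\mc m_{ij}=0$. This defines $\mc M_0$. We take $\mc A$ as the diagonal part, as in the migration setting. For $i=j$, we note that a nonzero $b_{is}f_{si}$ corresponds to a return trip. If this is excluded or absorbed (no loops), nothing remains to check. Otherwise the term is a delayed self-term, and we allow $\tau_{ii}$ as well.

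\emph{Necessity.} Suppose $I_{i,j}$ meets two distinct classes $I_r\neq I_{r'}$. Then $\mb v$ satisfies an equation containing $v_j(t-\tau_r)$ and $v_j(t-\tau_{r'})$ with $\tau_r\neq\tau_{r'}$. We must show that this cannot coincide with \eqref{DE} for every solution. This is the main obstacle: the coefficients $\beta^r_{ij}$ could cancel, and comparing two delay equations needs a uniqueness argument. The first thing to try is to exploit freedom in the initial data. Classical solutions are determined by $(\mr{\mb u},\mr{\mb v})$, and $\mr{\mb u}$ encodes the history $v(t-\tau)$ on $[-\tau_s,0]$ essentially arbitrarily, up to compatibility. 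So we may choose histories in which only $v_j$ is nonzero, localised near one time. Then $v_j(t-\tau_r)$ and $v_j(t-\tau_{r'})$ are supported on disjoint intervals, and the identity forces each $\beta^r_{ij}$ to match a single coefficient at a single delay. Hence at most one class carries a nonzero total. Here ``$I_{i,j}$'' is understood via the nonvanishing of the grouped sums, or we assume no cancellation, which holds automatically for nonnegative $\mc B,\mc F$.
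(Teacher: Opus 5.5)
Your sufficiency step matches the paper's. You insert $u_s(t,1)=\sum_j f_{sj}v_j(t-\tau_s)$ into the node equation and, when $I_{i,j}$ lies in one class $I_r$, collapse the inner sum to $\mc m_{ij}v_j(t-\tau_{ij})$ with $\mc m_{ij}=\sum_{s\in I_{i,j}}b_{is}f_{sj}$.

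The real difference is necessity. The paper only observes that the inner sum ``defines an entry of $\mc M_0$'' exactly when all $\tau_s$ with $b_{is}f_{sj}\neq 0$ coincide. That is, it matches coefficients of delayed terms formally, tacitly treating distinct delays as independent and ignoring cancellation. You supply that independence by encoding an arbitrary history in the initial datum, $\mr u_s(x)=(\mc F\mb h(-\tau_s x))_s$, $\mr{\mb v}=\mb h(0)$, which is admissible in $D(\mbb A)$ because $\mc G=0$. You also rightly note that the statement is false as written for mixed-sign $\mc B,\mc F$, since a class can meet $I_{i,j}$ with zero grouped sum. Your no-cancellation reading holds automatically for $\mc B,\mc F\geq 0$.

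The localisation needs to be stated more carefully, because it only works in a short initial window. For example, take $\mb h=\phi_\eta\mb e_j$, a bump of width $\eta$ centred at $t_0-\tau_r$ (so $\mr{\mb v}=0$), with $t_0$ smaller than every delay involved. At $t=t_0$ the only nonzero delayed value is $h_j(t_0-\tau_r)$, while $\mb v(t_0)=O(\eta)$ removes the undelayed terms. Letting $\eta\to0$ gives $\beta^r_{ij}=\mc m_{ij}$ if $\tau_{ij}=\tau_r$ and $\beta^r_{ij}=0$ otherwise. Note that for $t>0$ the solution $v_j$ itself is not localised, so ``supported on disjoint intervals'' applies only to this window.

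This history reading of ``$\mb v$ satisfies \eqref{DE}'' is essential. If one only required \eqref{DE} along actual solutions for $t>\tau_{\max}$, necessity could fail. For instance, under the paper's normalisation $\mc A=0$ with the $j$th row of $\mc B$ zero, $v_j$ is constant, and distinct delays acting on it cannot be distinguished.

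So the paper's argument is a quick structural identification that suffices for the intended nonnegative, graph-realised setting. Yours costs a little more, but it actually proves the equivalence under a precise interpretation.
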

\begin{proof} We can assume $\mc A\equiv 0$ as this matrix does not influence the structure of the connections in the graph. Then, using \eqref{del}, for each $i\in I_{{v}}, $ 
\begin{equation}\label{aux}
v_i' = \sum\limits_{s=1}^m b_{is}u_s(t,1) = \sum\limits_{s=1}^m b_{is}\sum\limits_{j=1}^k f_{sj}\upsilon_j(t-\tau_s)= \sum\limits_{j=1}^k\left(\sum\limits_{s=1}^m b_{is}f_{sj}\upsilon_j(t-\tau_s)\right).
\end{equation}
The result follows by noticing that the inner sum on the right-hand side above defines an entry of $\mc M_0$ in \eqref{DE} if and only if, for any given $i,j\in I_v$,  whenever $b_{is}f_{sj} \neq 0$ for some $s$, the corresponding $\tau_{s}$s are equal. Then $\mc m_{ij} = \sum\limits_{s\in I_{i,j}} b_{is}f_{sj}.$
\end{proof}
 To interpret Proposition \ref{prop0}, we observe that whenever $b_{is}f_{sj}\neq 0$, there is an input from $\nu_j$ into $\nu_i$ and the structure of \eqref{DE} requires any such input to arrive with the same delay.
 
In what follows, we shall present a particular realisation of the structure introduced in Proposition \ref{prop0} that uses the underlying graph structure of the system. Then, the entries of $\mc B$ must be interpreted as the rates at which the population flows into a node along the arcs connecting it with other nodes, so it must have the form $\mc B = \Phi^+\mc C$.  On the other hand, $\mc F$ should have the structure of  $\Phi^-$. However, to ensure the conservativeness of the process, the outflow from $\nu_i$ must equal the total flow along the arcs outgoing from it,  and thus we have to redistribute the flow among them. For this, we assume that each  $e_j, j\in E_i^-,$ will receive a fraction $w_{ji}$ of the inflow. Clearly, in this setting, the weights $w_{ji}$ satisfy
\begin{equation}
\sum\limits_{j\in E^-_i}^{}w_{ji}=1, \quad i\in I_v,
\label{sumom}
\end{equation}
as the matter is supposed to be neither lost nor created at the nodes. This necessitates introducing the weighted outgoing incidence matrix $\Phi^-_w,$ given by
\begin{equation}
\phi_{w,ji}^- = \left\{\begin{array}{ll} w_{ji},&\text{if}\;j \in E^-_i,\\
0,&\text{otherwise.}
\end{array}\right.
\label{om1}
\end{equation}
To balance the rates of the flow, we take the boundary condition in the form 
$$
\mc C\mb u(t,0) = \Phi_w^-\mc M_d\mb v(t), \quad t>0,
$$
and hence we consider the problem 
\begin{subequations}
    \label{de1}
    \begin{equation}
    \mb u_t= - \mc C \mb u_x, \quad x\in (0,1), t>0,\label {de1a}
    \end{equation}
    \begin{equation}
    \mb v_t (t)= -\mc M_d\mb v (t)+ \Phi^+\mc C \mb u(t,1),\quad t>0, \label{de1b}
    \end{equation}
    \begin{equation}
     \mb u(t,0) = \mc C^{-1}\Phi^{-}_w\mc M_d\mb v(t) , \quad t>0,\label{de1c}
    \end{equation}
    \end{subequations}
with appropriate initial conditions. Then Proposition \ref{prop0} yields
\begin{cor}
    Given a diagonal matrix $\mc M_d\geq 0$ and nonnegative weights $\{\mb w_i\}_{i\in I_v}$, there is a matrix $\mc M_0\geq 0$ such that for any classical solution $(\mb u,\mb v)$ to \eqref{de1}, $\mb v$  satisfies \eqref{DE} with  $\mc M = -\mc M_d+\mc M_0$.
\end{cor}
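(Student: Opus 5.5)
We cast \eqref{de1} in the form \eqref{mod1} and then check the criterion of Proposition \ref{prop0}. Comparing \eqref{de1} with \eqref{mod1}, we have $\mc Q=0$, $\mc G=0$, $\mc A=-\mc M_d$, $\mc B=\Phi^+\mc C$ and $\mc F=\mc C^{-1}\Phi^-_w\mc M_d$. The matrix $\mc A$ plays no role in the proof of Proposition \ref{prop0}: it only adds the diagonal term $-\mc M_d\mb v(t)$, which is exactly the $\mc m_{ii}v_i(t)$ part of \eqref{DE}. So it suffices to show that, for all $i,j\in I_v$, the set $I_{i,j}=\{s\in I_e:\ b_{is}f_{sj}\neq 0\}$ lies in a single class $I_r$ of equal traverse times.

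Next we compute the entries explicitly. Since $\mc C$ and $\mc M_d$ are diagonal,
$$b_{is}=\phi^+_{is}c_s,\qquad f_{sj}=c_s^{-1}\phi^-_{w,sj}\,\mc m_{d,jj},$$
where $\mc m_{d,jj}=-\mc m_{jj}\ge 0$. Hence
$$b_{is}f_{sj}=\phi^+_{is}\,\phi^-_{w,sj}\,\mc m_{d,jj}.$$
This is nonzero only if $s\in E^+_i\cap E^-_j$, that is, only if $e_s$ is an arc from $\nu_j$ to $\nu_i$. The graph has no multiple arcs, so $E^+_i\cap E^-_j$ contains at most one index (cf. Lemma \ref{lemn}). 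Moreover $i\neq j$ is automatic, since there are no loops. Therefore each $I_{i,j}$ is empty or a singleton, and trivially lies in some $I_r$. Proposition \ref{prop0} then gives that $\mb v$ satisfies \eqref{DE}, with
$$\mc m_{ij}=\sum_{s\in I_{i,j}}b_{is}f_{sj}=w_{si}\,\mc m_{d,jj}$$
for the unique arc $e_s$ from $\nu_j$ to $\nu_i$, and $\mc m_{ij}=0$ if there is no such arc. The delay is $\tau_{ij}=\tau_s$. (Here $w_{si}$ denotes the fraction of the outflow from the tail $\nu_j$ sent along $e_s$.)

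Finally, $\mc M_0\ge 0$ because the weights and $\mc M_d$ are nonnegative.

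The identification $\mc M=-\mc M_d+\mc M_0$ with $\mc M$ Kolmogorov (zero column sums) follows from \eqref{sumom}. Indeed,
$$\sum_{i\ne j}\mc m_{ij}=\mc m_{d,jj}\sum_{s\in E^-_j}w_{sj}=\mc m_{d,jj},$$
so the diagonal entries are consistent with $\mc m_{jj}=-\sum_{i\neq j}\mc m_{ij}$. The statement only asserts the existence of $\mc M_0$, so this is a consistency check rather than a required step. If $\nu_j$ is a sink, then $E^-_j=\emptyset$, and the balance holds only if $\mc m_{d,jj}=0$; this is a side remark.

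The only delicate point is the index bookkeeping in the product $b_{is}f_{sj}$. In particular, one must use that each column of $\Phi^+$ and each row of $\Phi^-_w$ has at most one nonzero entry, together with the absence of multiple arcs, to guarantee that the sum in $I_{i,j}$ has at most one term. Once that is established, the delay-consistency condition of Proposition \ref{prop0} holds trivially.
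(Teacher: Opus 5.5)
Your proof is correct and follows essentially the paper's route: both verify the hypothesis of Proposition~\ref{prop0} from the fact that columns of $\Phi^+$ and rows of $\Phi^-_w$ have at most one nonzero entry. Hence $b_{is}f_{sj}\neq 0$ only for the arc $e_s$ from $\nu_j$ to $\nu_i$, which is unique because there are no multiple arcs, and one then reads off $\mc m_{ij}$ with $\tau_{ij}=\tau_s$; your entrywise bookkeeping makes explicit the no-multiple-arcs assumption that the paper's vector computation in \eqref{mcC}--\eqref{phiC} uses only implicitly, though in your formula for $\mc m_{ij}$ the weight should be written $w_{sj}$, consistent with \eqref{om1}, rather than $w_{si}$.
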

\begin{proof}
To use \eqref{de1c}, we recall that in each row of $\Phi^-_w,$ there is at most one nonzero entry $w_{rs}$ giving the portion of the material in $\nu_s$ moving into the arc $e_r$ (there can be a zero-row if $\nu_s$ is a sink). Then,
\begin{equation}
\mc C^{-1} \Phi^-_w\mc M_d\mb v(t)=\left(\begin{array}{c} c_1^{-1}w_{1s_1} \mc m_{s_1s_1}v_{s_1}(t)\\\vdots\\c_m^{-1}w_{ms_m} \mc m_{s_ms_m}v_{s_m}(t)\end{array}\right)
\label{mcC}
\end{equation}
for some rearrangement, with possible repetitions,  $(s_1,\ldots,s_m)$  of $(1,\ldots,k),$ and hence 
 $$
\mb u(t,1)=\mb  u_{\mb \tau}(t,0) = \left(\begin{array}{c} c_1^{-1}w_{1s_1}\mc m_{s_1s_1}v_{s_1}(t-\tau_{1})\\\vdots\\c_m^{-1}w_{ms_m}\mc m_{s_ms_m}v_{s_m}(t-\tau_{m})\end{array}\right).
$$
Thus, the internal sum in the third term of \eqref{aux} consists of only one summand and thus the assumptions of Proposition \ref{prop0} are satisfied, so $\mb v$ satisfies \eqref{DE} with some $\mc M_0$. To identify its entries in the current context, we observe that 
\begin{equation}\label{phiC}
\Phi^+\mc C \mb u(t,1) = \Phi^+\left(\begin{array}{c} w_{1s_1}\mc m_{s_1s_1}v_{s_1}(t-\tau_{1})\\\vdots\\w_{ms_m}\mc m_{s_ms_m}v_{s_m}(t-\tau_{m})\end{array}\right) = \left(\begin{array}{c} \sum\limits_{i=1}^m \phi^+_{1i}w_{is_i}\mc m_{s_is_i}v_{s_i}(t-\tau_i)\\
\vdots\\
\sum\limits_{i=1}^m \phi^+_{ki}w_{is_i}\mc m_{s_is_i}v_{s_i}(t-\tau_i)
\end{array}\right).
\end{equation}
If $\phi^+_{ji}w_{is_i}\mc m_{s_is_i}\neq 0,$ then there is a connection from $\nu_{s_i}$ through arc $e_i$ to $\nu_j.$ Thus $\tau_i = \tau_{js_i}$ in \eqref{DE}.  Therefore, with $\phi^+_{ji}w_{is_i}\mc m_{s_is_i} =: \mc m_{js_i}, j\in I_v, i\in I_e,$ $j\neq s_i$,  and   $\mc A = -\mc M_d,
$
\eqref{de1b} is the same as \eqref{DE}.
\end{proof}
\begin{rem}
    We see that the proof  depends on the structure of $\Phi^-_w$. Assumption \eqref{om1} on the weights and the role of $\mc M_d$ plays a role in the conservativeness of the process, see Section \ref{cons}.
\end{rem}


\begin{rem}
If  we only allow one delay in \eqref{DE}, 
\begin{equation}\label{DE2}
\mb v'(t)= -\mc M_d \mb v (t) + \mc M_0 \mb v(t-\tau),
\end{equation}
then $\mc M_0$ can be factorized in an arbitrary way, say, $\mc M_0 = \mc B\mc C\mc C^{-1}\mc F,$ to arrive at \eqref{de1} with 
\begin{subequations}
    \label{de11}
    \begin{equation*}
    \mb v_t (t)= -\mc M_d \mb v (t)+ \mc B\mc C \mb u(t,1),\quad t>0, 
    \end{equation*}
    \begin{equation*}
     \mc C\mb u(t,0) = \mc F\mb v(t) , \quad t>0.
    \end{equation*}
    \end{subequations}
Indeed, then 
$$\mc B\mc C\mb u(t,1)=\mc B\mc C\mb u(t-\tau,0)= \mc B\mc F\mb v(t-\tau) = \mc M_0\mb v(t-\tau).$$
If, however, the delays vary and $\mc F$ is such that in the formula corresponding to \eqref{mcC} there appear more than one $v_{s_i}$ in some rows, then $\mb u_{\mb \tau}(t,0) =[\mc F\mb v(t)]_{\mb\tau} \neq \mc F\mb v_{\mb\tau}(t)$ and in $\mc B[\mc F\mb v(t)]_{\mb\tau}$ we have a mixture of different delays coming from the same source.  

Of particular importance is the factorization $\mc M_0 = \mc M_0\mc I,$ which, together with the change of variable $x\to -x$,   converts \eqref{DE2} into 
\begin{subequations}
    \label{de3}
    \begin{equation*}
    \mb u_t=  \mb u_x, \quad x\in (-1,0), t>0,
    \end{equation*}
    \begin{equation*}
    \mb v_t (t)= -\mc M_d \mb v (t)+ \mc M_0 \mb u(t,-1),\quad t>0, 
    \end{equation*}
   \begin{equation*}
   \mb u(t,0) = \mb v(t), \quad t>0,
   \end{equation*}
    \end{subequations}
typically used to analyse \eqref{DE2}, e.g., \cite[Chapter II]{BatPia} or \cite[Chapter 15]{batkai2017positive}. However, this system does not provide any interesting physical graph realisation of the transport in \eqref{DE2}. 
\end{rem}

\subsection{Cell maturation and differentiation model}

An essential role in the derivation of \eqref{phiC}, that is, in revealing the network structure of the transport, is played by the structure of $\Phi_w^-$. In fact, \eqref{mcC} expresses the fact that any arc can be fed from only one vertex. However, if we distance ourselves from the graph interpretation of \eqref{de1}, then, in principle, there could occur an input from several vertices into an arc. This justifies a need to use arbitrary matrices in \eqref{mod1}. As a concrete realisation of such a model (with $\mc F=0$), we can mention the Lebowitz--Rubinov--Rotenberg model of cell maturation, division and mutation, \cite{Rot}, analysed in the current framework in \cite{BanRot}.

More general models, which fit into the framework presented in \eqref{mod1}, have been developed in, e.g.,  \cite{Belair, DoumicMC}. Leaving aside the biological details, we have a reservoir of stem cells which undergo an asymmetric division with one progeny becoming a self-renewed stem cell and one a differentiating (progenitor, precursor) cell which matures and eventually becomes a mature cell. The model developed in \cite{DoumicMC} for stem cells $w(t)$, progenitor cells of density $u(t,x)$ (where $0\leq x\leq 1$ denotes the maturity of the cell) and the mature cells $v(t)$, written as \eqref{mod1}, is given as  
\begin{equation}
\label{Mar}
\begin{split}
\p_t u(t,x) + \p_x(g(x, s)u(t,x)) &= p(x, s)u(t,x) - d(x)u(t,x),\\
w'(t)&= (2a_w-1)p_w(s) w(t) - d_w w(t),\\
v'(t) &=  -\mu v(t) + g(1, s)u(t,1),\\
g(0, s)u(t, 0) &= 2(1 - a_w(s))p_w(s)w(t), 
\end{split}
\end{equation}
where $a_w$ is the fraction of the progeny cells remaining in the stem cell pool, $p_w, p$ denote the proliferation rates, and $d_w, d$ the death rates of, respectively,  stem and progenitor cells, and $\mu$ is the death rate of the mature cells, $g$ is the maturation rate. The process is regulated by a single feedback mechanism based on the assumption that there exist signalling molecules (cytokines) which regulate the differentiation or proliferation
process. The signal intensity depends on the level of mature cells, $s=s(v)$. The model can be easily extended to different types of progenitor cells, ageing mature cells, \cite{Belair}, extrinsic factors affecting the differentiation, etc.

We note a significant difference between \eqref{Mar} and the migration model of the previous section: the equations for $v$ and $w$ remain coupled even in the absence of the transport part. 

We note that the methods developed in this paper can handle \eqref{Mar}, except for the nonlinearity in $g$ and in the boundary condition at $x=0$ (the right-hand side of the first equation is a semilinear perturbation of the problem), but we shall not develop further theory for it. 

\section{Well-posedness of \eqref{mod1} }\label{exist}

In the following two sections, we set $\mc Q=0$ in \eqref{mod1}, and whenever we refer to \eqref{mod1}, we mean its linear version \eqref{mod2a}. 
\subsection{$L^1$ theory}

As mentioned above, we consider \eqref{mod1} in the form  
\begin{subequations}\label{mod2}
\begin{equation}\label{mod2a}
\partial_t\left(\begin{array}{c}\mb u\\\mb v\end{array}\right) = \left(\begin{array}{cc}-\mc C(x)\mc{diag}\,\mb{\p}_x&0\\\mc B\gamma_1&\mc A\end{array}\right)\left(\begin{array}{c}\mb u\\\mb v\end{array}\right) =: \mathbb A\left(\begin{array}{c}\mb u\\\mb v\end{array}\right),
\end{equation}
where $\mb \p_x =\underbrace{ (\p_x,\ldots,\p_x)}_{m\;\text{times}},$ in $
\mbb X_1$ on the domain 
\begin{equation}
D(\mbb A) = \left\{(\mb u,\mb v)\in W^1_1([0,1],\mbb C^m)\times\mbb C^k:\;  \gamma_0 \mb u = \mc  F\mb v +\mc G\gamma_1 \mb u\right\},\label{mod2b}
\end{equation}
\end{subequations}
and appropriate initial conditions. 

We begin with finding the resolvent operator of $\mathbb{A}$. Although $$\left(\begin{array}{cc}0&0\\0&\mc A\end{array}\right)$$ is a bounded operator and thus can be omitted for the proof of the generation theorem, we include it in the resolvent operator as it will be necessary for further considerations.

We list basic notation and abbreviations of various operators that will be used in the sequel.
For a given operator $\mc O$, we denote its spectrum by $\sigma(\mc O)$;  $\rho(\mc O)=\mbb C\setminus \sigma(\mc O)$ denotes the resolvent set of $\mc O,$ $r(\mc O)= \sup_{\la\in \sigma(\mc O)}|\la|$ its spectral radius, $s(\mc O)=\sup_{\la\in \sigma(\mc O)}\Re\la$ its spectral bound, and $R(\la,\mc O)=(\la I-\mc A)^{-1}, \la \in \rho(\mc O),$ its resolvent.  If $\mc O$ is a matrix, we say that it is a Metzler matrix if its off-diagonal entries are nonnegative and that it is Metzler stable if $s(\mc O)<0.$ If $\mc O$ is a matrix, $\mc O_j$ denotes its $j$th row. Further, 
\begin{subequations}\label{ops}
\begin{equation}
\mb{E}_{\lambda}(s,t) = \operatorname{diag}\left(\mb e_i(s,t)\right)_{1\leq i\leq m} = \operatorname{diag}\;\left(e^{-\frac{\lambda}{c_i}(t-s)}\right)_{1\leq i\leq m}, \quad \mb E_\la:= \mb E_\la(0,1), 
\label{E}
\end{equation}
 \begin{equation}
 \mc H_\la = \mc FR(\lambda,\mc A)\mc B +\mc G
 \label{Hl}
 \end{equation}
\begin{equation}
[R_0(\la)\mb f](x) = \mathcal{C}^{-1}\int_{0}^{x}\mb{E}_{\lambda}(s,x)\mb{f}(s)ds 
\label{R0}
\end{equation}
\begin{equation}
\mc K_\la = (\mc I - \mc G\mb{E}_{\lambda})^{-1}, \label{Kl}
\end{equation}
\begin{equation}
\mc M_\la = R(1, \mc H_\la\mb E_\la)= \left(\mc I-\left(\mc FR(\la, \mc A)\mc B+\mc G\right)\mb E_{\lambda}\right)^{-1},
\label{Ml}
\end{equation}
\begin{equation}
\mc L_\la = \mc B\mb{E}_{\lambda}\mc K_\la\mc F,\label{Ll}\end{equation}
\begin{equation}
\mc R_\la=R(0, \mc L_\la+\mc A-\la\mc I) = (\la I-\mc A-\mc L_\la)^{-1}.
\label{Rl}
\end{equation}
 \end{subequations}
 We observe that $\mc R_\la$ in \eqref{Rl} is not a resolvent as the parameter $\la$ occurs also in the operator. Precisely, as stated above, it is the resolvent at 0 of $\mc L_\la+\mc A-\la\mc I$ with a fixed $\la.$
\begin{prop}\label{propres}
    The resolvent $R(\lambda,\mathbb{A})$ of the operator $\mathbb{A}$ is given by
    $$
\left[R(\la, \mbb A)\left(\begin{array}{c}\mb f\\\mb g\end{array}\right)\right](x)=\left(\begin{array}{c}\mb u(x)\\\mb v\end{array}\right),
$$
where
    \begin{subequations}\label{res11}
    \begin{equation}
    \begin{split}
\mb u(x)&=\mb E_{\lambda}(0,x)\mc M_\la \left(\mc FR(\lambda,\mc A)\mb g + \mc H_{\lambda}
  [R_0(\la)\mb{f}](1)\right)+[R_0(\la)\mb{f}](x),\end{split}
  \label{res1aa}  
  \end{equation}
  \begin{equation}\begin{split}
    \mb v&=R(\lambda,\mc A)\left(\mb g+\mc B\mb E_{\lambda}\mc M_\la \left(\mc FR(\lambda,\mc A)\mb g + \mc H_{\lambda}
  [R_0(\la)\mb{f}](1)\right)+\mc B[R_0(\la)\mb f](1)\right),
\end{split}
\label{res1bb} \end{equation}
\end{subequations}
or, equivalently, by 
\begin{subequations}
\label{res22}
\begin{equation}
  \begin{split}
\mb u(x)&= \mb E_\la(0,x)\left(\mc K_\la\left(\mc F \mc R_\la\mb g +  
\left(\mc F \mc R_\la\left(\mc B\left(\mb{E}_{\lambda}\mc K_\la \mc G +\mc I\right)\right) + \mc G\right) [R_0(\la)\mb{f}](1)\right)\right)\\
&\phantom{x}+ [R_0(\la)\mb f](x),
\end{split}\label{res2a}
\end{equation}
\begin{equation}
\mb v = \mc R_\la\left(\mb g + \mc B\left(\mb{E}_{\lambda}\mc K_\la \mc G +\mc I\right)[R_0(\la)\mb{f}](1)\right),
\label{res2b}
\end{equation}
\end{subequations}
for sufficiently large $\Re\lambda $.
\end{prop}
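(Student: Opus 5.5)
We solve the resolvent equation $(\la I-\mbb A)(\mb u,\mb v)=(\mb f,\mb g)$ directly, with $(\mb u,\mb v)\in D(\mbb A)$. Written out, it is the system
\begin{equation*}
\la\mb u+\mc C\mb u_x=\mb f,\qquad \la\mb v-\mc A\mb v-\mc B\gamma_1\mb u=\mb g,\qquad \gamma_0\mb u=\mc F\mb v+\mc G\gamma_1\mb u .
\end{equation*}

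\textbf{Solving the edge equations.} The first equation is a decoupled system of scalar linear ODEs. Its general solution is
\begin{equation*}
\mb u(x)=\mb E_\la(0,x)\mb a+[R_0(\la)\mb f](x),
\end{equation*}
with an unknown vector $\mb a=\gamma_0\mb u\in\mbb C^m$. Since $[R_0(\la)\mb f](0)=0$, we have $\gamma_1\mb u=\mb E_\la\mb a+[R_0(\la)\mb f](1)$. For $\Re\la>s(\mc A)$, the node equation gives
\begin{equation*}
\mb v=R(\la,\mc A)\bigl(\mb g+\mc B\gamma_1\mb u\bigr).
\end{equation*}

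\textbf{Derivation of \eqref{res11}.} Substitute this $\mb v$ into the boundary condition. Using \eqref{Hl}, this yields the algebraic equation
\begin{equation*}
\mb a=\mc FR(\la,\mc A)\mb g+\mc H_\la\mb E_\la\mb a+\mc H_\la[R_0(\la)\mb f](1).
\end{equation*}
The key point is that $\mc I-\mc H_\la\mb E_\la$ is invertible for large $\Re\la$. Indeed, $\|\mb E_\la\|\le e^{-\Re\la/c_{\max}}\to0$, while $\|R(\la,\mc A)\|\le (\Re\la-\|\mc A\|)^{-1}$, so $\|\mc H_\la\|$ stays bounded. Hence $\|\mc H_\la\mb E_\la\|<1$ for $\Re\la$ large, and the Neumann series gives $\mc M_\la$ in \eqref{Ml}. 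Then
\begin{equation*}
\mb a=\mc M_\la\bigl(\mc FR(\la,\mc A)\mb g+\mc H_\la[R_0(\la)\mb f](1)\bigr),
\end{equation*}
which is \eqref{res1aa}. Inserting $\gamma_1\mb u$ into the formula for $\mb v$ gives \eqref{res1bb}.

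\textbf{Injectivity and surjectivity.} Every step above is an equivalence. Conversely, for any $(\mb f,\mb g)\in\mbb X_1$ the constructed $\mb u$ lies in $W^1_1$ and satisfies the boundary condition, so $(\mb u,\mb v)\in D(\mbb A)$ solves the system. Uniqueness follows because $\mb f=0,\mb g=0$ forces $\mb a=0$. Thus $\la\in\rho(\mbb A)$ and the formula is the resolvent.

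\textbf{Derivation of \eqref{res22}: eliminate $\mb u$ first.} This requires the same algebra done in a different order. Treat $\mb v$ as unknown and write the boundary condition as
\begin{equation*}
(\mc I-\mc G\mb E_\la)\mb a=\mc F\mb v+\mc G[R_0(\la)\mb f](1).
\end{equation*}
Here $\mc K_\la$ exists for large $\Re\la$ since $\|\mc G\mb E_\la\|\to0$. This gives $\mb a=\mc K_\la(\mc F\mb v+\mc G[R_0(\la)\mb f](1))$ and hence
\begin{equation*}
\gamma_1\mb u=\mb E_\la\mc K_\la\mc F\mb v+(\mb E_\la\mc K_\la\mc G+\mc I)[R_0(\la)\mb f](1).
\end{equation*}
Plugging this into the node equation gives
\begin{equation*}
(\la\mc I-\mc A-\mc L_\la)\mb v=\mb g+\mc B(\mb E_\la\mc K_\la\mc G+\mc I)[R_0(\la)\mb f](1).
\end{equation*}
Since $\|\mc L_\la\|\to0$, the matrix $\la\mc I-\mc A-\mc L_\la$ is invertible for large $\Re\la$, with $\|\mc L_\la\|+\|\mc A\|<\Re\la$. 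This yields \eqref{res2b}.

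\textbf{Completing \eqref{res2a}.} Substituting \eqref{res2b} back into $\mb a$ gives \eqref{res2a} after collecting the $[R_0(\la)\mb f](1)$ terms into the factor
\begin{equation*}
\mc F\mc R_\la\mc B(\mb E_\la\mc K_\la\mc G+\mc I)+\mc G .
\end{equation*}
Both representations give the unique solution of the same uniquely solvable system, so they coincide.

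The main obstacle is only the bookkeeping needed to check that this rearrangement matches \eqref{res2a} exactly. The invertibility estimates are routine.
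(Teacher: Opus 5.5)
Your proposal is correct and follows essentially the same route as the paper. Variation of constants reduces the resolvent equation to the finite-dimensional system for $(\mb v,\gamma_0\mb u)$, which you solve in the two ways the paper does: eliminate $\mb v$ via $R(\la,\mc A)$ and invert $\mc I-\mc H_\la\mb E_\la$ by a Neumann series, or eliminate $\gamma_0\mb u$ via $\mc K_\la$ and invert $\la\mc I-\mc A-\mc L_\la$. Your explicit bound $\|\mb E_\la\|\le e^{-\Re\la/c_{\max}}$ makes the ``sufficiently large $\Re\la$'' step more concrete than the paper does. One small slip: $[R_0(\la)\mb f](0)=0$ justifies $\gamma_0\mb u=\mb a$, whereas your formula for $\gamma_1\mb u$ is just evaluation at $x=1$.
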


\begin{proof}
The resolvent equation
 takes the form
\begin{subequations}\label{res1}
\begin{equation}
\lambda \mb u(x)+\mc C\mb u'(x)=\mb f(x),\quad x\in (0,1), 
\label{res1a}
\end{equation}
\begin{equation}
    \la \mb v -\mc A\mb v= \mb g + \mc B\gamma_1 \mb u\label {res1b}.
\end{equation}
\end{subequations}
Using the variation of constants formula and \eqref{ops}, the solution to \eqref{res1a} is given by
\be \label{rozwrez}
\label{u}
\mb{u}(x)=\mb{E}_{\lambda}(0,x)\mb y+\mathcal{C}^{-1}\int_{0}^{x}\mb{E}_{\lambda}(s,x)\mb{f}(s)ds = \mb{E}_{\lambda}(0,x)\mb y+[R_0(\la)\mb f](x),
\ee
for some $\mb {y}=(y_{j})_{j\in I_e}$. Then, $\mb v$ and $\mb y$ can be determined from the following system of linear equations: 
\begin{equation}\label{forv2}
\begin{split}
\la \mb v-\mc A\mb v -\mc B\mb{E}_{\lambda}\mb y &=\mb g+\mc B[R_0(\la)\mb f](1)\\
  -\mc F\mb v +(\mc I- \mc G\mb{E}_{\lambda})\mb y &=\mc G [R_0(\la)\mb f](1).
\end{split}
\end{equation}
We provide two ways of solving \eqref{forv2}, which depend on the spectral properties of the involved operators and are equivalent for large $\Re\la.$

For sufficiently large $\Re\lambda$, $\la\notin\sigma(\mc A)$ and thus from the first equation of \eqref{forv2} we have
\begin{equation}\label{formulaforv}
\begin{split}
\mb v&=R(\la, \mc A)(\mb g +\mc B\gamma_1\mb u)=R(\la, \mc A)\left(\mb g +\mc B\left(\mb{E}_{\lambda}\mb y+[R_0(\la)\mb f](1)\right)\right).
\end{split}
\end{equation}
Inserting it into the second equation of \eqref{forv2} and rearranging, we get 
\begin{equation}
\left(\mc I-\mc H_\la\mb E_{\lambda}\right)\mb y=\mc FR(\la, \mc A)\mb g
+\left(\mc FR(\la, \mc A)\mc B+\mc G\right)\mathcal{C}^{-1}\int_{0}^{1}\!\!\mb{E}_{\lambda}(s,1)\mb{f}(s)ds. \label{formulafory}
\end{equation}
We observe that, by \eqref{Hl}, taking $\Re\lambda$ sufficiently large, we can achieve 
\begin{equation}
r(\mc H_\la \mb E_{\lambda})<1,
\label{rr1}
\end{equation}
and thus express the inverse as the Neumann series.  Hence, we have
$$
 \mb y = \mc M_\la\left(\mc FR(\la, \mc A)\mb g + \mc H_\la
  [R_0(\la)\mb{f}](1)\right) =  R(1, \mc H_\la\mb E_\la)\left(\mc FR(\la, \mc A)\mb g + \mc H_\la
  [R_0(\la)\mb{f}](1)\right),
 $$
 where, if \eqref{rr1} is satisfied, $\mc M_\la$ is given by 
\begin{equation}
\mc M_\la\mb z=\sum\limits_{r=0}^\infty \mc H_\la^r \mb E_\la^r\mb z, \quad \mb z \in \mbb C^m.
  \label{Mla}
 \end{equation}
Inserting this expression into the formulae for $\mb u$ and $\mb v$, we get \eqref{res11}.

Equivalently, \eqref{formulaforv} can be written as 
\begin{equation}\label{formulaforv1}
\begin{split}
\mb g &=(\lambda\mc I-\mc A)\mb v -\mc B\gamma_1\mb u=(\lambda\mc I-\mc A)\mb v -\mc B\left(\mb{E}_{\lambda}\mb{y}+[R_0(\la)\mb{f}](1)\right),
\end{split}
\end{equation}
and \eqref{formulafory} as
\begin{equation}\label{formulafory1}
\begin{split}
    \mb y=\mc F\mb v+\mc G\gamma_1\mb u&=\mc F \mb v +\mc G\left(\mb{E}_{\lambda}\mb y+[R_0(\la)\mb f](1)\right).
\end{split}
\end{equation}
For large $\la$, $\mc I - \mc G\mb{E}_{\lambda}$ is invertible, so, solving \eqref{formulafory1} for $\mb y,$ substituting into \eqref{formulaforv1}  and grouping there terms containing the unknown $\mb v$ 
\begin{equation}
    \begin{split}
&\left(\lambda\mc I-\mc A -\mc B\mb{E}_{\lambda}\mc K_\la\mc F\right)\mb v = 
\mb g + \mc B\left(\mb{E}_{\lambda}\mc K_\la \mc G +\mc I\right)[R_0(\la)\mb{f}](1).
\end{split}\label{Rleq}
\end{equation}
Then, for large $\la$, $\lambda\mc I-\mc A -\mc L_\la$, see \eqref{Ll}, is invertible and 
$$
\mb v = \mc R_\la\left(\mb g + \mc B\left(\mb{E}_{\lambda}(0,1)\mc K_\la \mc G +\mc I\right)[R_0(\la)\mb{f}](1)\right).
$$
Therefore, 
\begin{equation*}
\begin{split}
\mb y &=  \mc K_\la\mc F \mc R_\la\left(\mb g + \mc B\left(\mb{E}_{\lambda}\mc K_\la \mc G +\mc I\right)[R_0(\la)\mb{f}](1)\right) + \mc K_\la\mc G [R_0(\la)\mb{f}](1)\\
&= \mc K_\la\left(\mc F \mc R_\la\mb g +  
\left(\mc F \mc R_\la\left(\mc B\left(\mb{E}_{\lambda}\mc K_\la \mc G +\mc I\right)\right) + \mc G\right) [R_0(\la)\mb{f}](1)\right),
\end{split}
\end{equation*}
and, substituting it into \eqref{u}, we get \eqref{res22}. 

It is easy to convince ourselves that both formulae define the resolvent. Indeed, formulae \eqref{res1a} and \eqref{res2a} give a solution in $W^1_1((0,1),\mbb C^m)$ to \eqref{res1aa} and the other terms are unique solutions to the relevant finite-dimensional linear equations.  
\end{proof}
To bring together the representations \eqref{res11} and \eqref{res22}, we observe that, if 
\begin{equation}\label{pos}
\mc B\geq 0,\mc G\geq 0,\mc F\geq 0\;\text{and}\;\mc A\;\text{is\;a\;Metzler\;matrix},
\end{equation}
a positive operator $\mc R_\la$  exists if and only if, e.g., \cite[Theorem 8.1]{banpop},
$$
s(\mc L_\la +\mc A-\la \mc I)<0,
$$
provided $\mc K_{\lambda}$ (and hence $\mc L_{\lambda}$) is positive. The condition
\begin{equation}
r(\mc G\mb E_\la)<1,
\label{Gass}
\end{equation}
for some $\la\in \mbb R$, plays an essential role in further considerations. 
\begin{prop}\label{prop2}
Let \eqref{pos} be satisfied. Then, the following conditions are equivalent:
\begin{itemize}
    \item[(i)] $r(\mc G\mb E_\la)<1$ and $s(\mc L_\la+\mc A - \la\mc I)<0$,
    \item[(ii)] $\lambda>s(\mc A)$ and $r(\mc H_\la \mb E_\la)<1$.
\end{itemize}
\end{prop}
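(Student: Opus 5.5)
The plan is to recast both conditions as the single statement that one block Metzler matrix is stable, and then to read each of (i) and (ii) off as one of the two Schur-complement characterisations of that stability. Take $\la\in\mbb R$ (implicit in (ii) through $\la>s(\mc A)$). Following the linear system \eqref{forv2} for the unknowns $(\mb v,\mb y)$, define the $(k+m)\times(k+m)$ matrix
\begin{equation*}
\mc N_\la=\left(\begin{array}{cc}\mc A-\la\mc I & \mc B\mb E_\la\\ \mc F & \mc G\mb E_\la-\mc I\end{array}\right).
\end{equation*}
By \eqref{pos} and since $\mb E_\la\geq 0$ is diagonal for real $\la$, $\mc N_\la$ is a Metzler matrix. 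Its diagonal blocks $\mc A-\la\mc I$ and $\mc G\mb E_\la-\mc I$ are Metzler, and its off-diagonal blocks are nonnegative. Equivalently, $-\mc N_\la$ is a Z-matrix.

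The main tool is the standard Schur-complement criterion for M-matrices (e.g.\ Berman--Plemmons), which I state in Metzler form. Let $\mc N=\left(\begin{smallmatrix}P&Q\\ R&S\end{smallmatrix}\right)$ be Metzler with $Q,R\geq 0$. Then
\begin{equation*}
s(\mc N)<0 \iff s(S)<0\ \text{and}\ s(P-QS^{-1}R)<0,
\end{equation*}
and symmetrically
\begin{equation*}
s(\mc N)<0 \iff s(P)<0\ \text{and}\ s(S-RP^{-1}Q)<0.
\end{equation*}
If a self-contained argument is needed, I would use the characterisation that a Metzler matrix $\mc O$ satisfies $s(\mc O)<0$ if and only if $\mc O$ is invertible with $-\mc O^{-1}\geq 0$ (cf.\ \cite[Theorem 8.1]{banpop}).
\begin{itemize}
\item[$(\Leftarrow)$] Write $-\mc N^{-1}$ by the block inversion formula. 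Its entries are built from $-S^{-1}\geq0$, $-(P-QS^{-1}R)^{-1}\geq0$ and the nonnegative blocks $Q,R$, so it is nonnegative.
\item[$(\Rightarrow)$] Note first that the Schur complement $P-QS^{-1}R$ is again Metzler, because $-QS^{-1}R\geq 0$. Then use the fact that principal submatrices of a stable Metzler matrix are stable (a monotonicity property of the spectral bound), which gives $s(S)<0$. Finally, the inverse of the Schur complement is minus the corresponding diagonal block of $-\mc N^{-1}$, hence $\leq 0$, so the Schur complement is stable.
\end{itemize}
I expect this step, stating and justifying the lemma carefully, to be the main obstacle. The rest is identification of blocks.

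For (i), apply the first form with $S=\mc G\mb E_\la-\mc I$. Since $\mc G\mb E_\la\geq0$, Perron--Frobenius gives $s(\mc G\mb E_\la)=r(\mc G\mb E_\la)$, so $s(S)<0$ if and only if $r(\mc G\mb E_\la)<1$. In that case $-S^{-1}=\mc K_\la$ by \eqref{Kl}, and the Schur complement is
\begin{equation*}
\mc A-\la\mc I+\mc B\mb E_\la\mc K_\la\mc F=\mc L_\la+\mc A-\la\mc I.
\end{equation*}
Hence (i) $\iff s(\mc N_\la)<0$.

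For (ii), apply the second form with $P=\mc A-\la\mc I$. Here $s(P)<0$ if and only if $\la>s(\mc A)$, and then $-P^{-1}=R(\la,\mc A)\geq 0$. The Schur complement is
\begin{equation*}
\mc G\mb E_\la-\mc I+\mc FR(\la,\mc A)\mc B\mb E_\la=\mc H_\la\mb E_\la-\mc I.
\end{equation*}
Since $\mc H_\la\mb E_\la\geq0$, Perron--Frobenius again gives that its stability is equivalent to $r(\mc H_\la\mb E_\la)<1$. Hence (ii) $\iff s(\mc N_\la)<0$, and combining the two equivalences proves the proposition.
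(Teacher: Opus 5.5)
Your proof is correct, but it follows a different route from the paper's. The paper never forms a block matrix in this proof. From (i) it gets $\mc K_\la\geq 0$, hence $\mc L_\la\geq 0$ and, by monotonicity of the spectral bound, $\la>s(\mc A)$. It then applies the regular-splitting theorem twice, once to $\mc L_\la+(\mc A-\la\mc I)$ and once to $\mc FR(\la,\mc A)\mc B\mb E_\la+(\mc G\mb E_\la-\mc I)$. The two results are linked by the cyclic identity $r(\mc L_\la R(\la,\mc A))=r(\mc FR(\la,\mc A)\mc B\mb E_\la\mc K_\la)$. The reverse direction needs a separate monotonicity step to recover $r(\mc G\mb E_\la)<1$ from $r(\mc H_\la\mb E_\la)<1$.

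Your $\mc N_\la$ is exactly the matrix $\mbb M_\la$ of \eqref{detA} for real $\la$. Your two Schur complements are the two orders of block elimination of \eqref{forv2}, the same eliminations that produce the representations \eqref{res11} and \eqref{res22}.

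What your route buys:
\begin{itemize}
\item It treats (i) and (ii) symmetrically, with no monotonicity step needed in either direction.
\item It yields a third equivalent condition, $s(\mbb M_\la)<0$. This is precisely what the paper later imports from \cite{KamSal} in Remark \ref{rem3} and uses in Theorem \ref{mstth} and Corollary \ref{s(A)=0}.
\item The side conditions $\la>s(\mc A)$ and $r(\mc G\mb E_\la)<1$ come for free as stability of principal submatrices.
\end{itemize}

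The price is that the M-matrix Schur-complement lemma must be proved or cited. One small point in your ($\Rightarrow$) direction should be stated: the Schur complement $T=P-QS^{-1}R$ is invertible because $\det\mc N=\det S\,\det T$, so $T^{-1}$ really is the $(1,1)$ block of $\mc N^{-1}$.

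The paper's route, by contrast, produces the intermediate next-generation-type criterion $r(\mc FR(\la,\mc A)\mc B\mb E_\la\mc K_\la)<1$. That criterion matches the spectral-radius style of the later applications, such as Theorem \ref{thm5}.
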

\begin{proof}
By (i), $\mc K_\la$ exists and is nonnegative, therefore, $\mc L_\la$ is also nonnegative. Thus, by the monotonicity of the spectral bound for nonnegative perturbations, we get  $\la>s(\mc A)$. Therefore, $\mc A-\la\mc I$ is a stable Metzler matrix and $\mc L_\la +(\mc A-\la \mc I)$ represents the so-called regular split, see \cite{Varga}, and by \cite[Theorem 3.29]{Varga} or \cite[Theorem 8.5]{banpop},
we have 
\begin{equation}
s(\mc L_\la+\mc A - \la\mc I)<0\quad\text{if\;and\;only\;if}\quad r(\mc L_\la R(\la, \mc A))<1.
\label{sr1}
\end{equation}
Using the equality $r(\mc R\mc S) = r(\mc S\mc R),$ valid for any matrices, from \eqref{Ll}, 
\begin{equation}
r(\mc L_\la R(\la, \mc A))=r(\mc B\mb{E}_{\lambda}\mc K_\la\mc FR(\la, \mc A) )= r(\mc FR(\la, \mc A)\mc B\mb{E}_{\lambda}\mc K_\la ). 
\label{sr2}
\end{equation}
Consider now
$$
\mc H_\la \mb E_\la-\mc I=\mc FR(\la, \mc A)\mc B\mb{E}_{\lambda}+  \mc G\mb{E}_{\lambda}-\mc I.
$$
The first term is nonnegative and the second is a stable Metzler matrix on account of the positive invertibility of  $-(\mc G\mb{E}_{\lambda}-\mc I),$ ensured by \eqref{Gass}, see, e.g., \cite[Theorem 8.1]{banpop}.
Using again \cite[Theorem 3.29]{Varga} or \cite[Theorem 8.5]{banpop}, we see that 
$$
s(\mc H_\la \mb E_\la-\mc I)<0 \quad\text{if\;and\;only\;if}\quad r(\mc FR(\la, \mc A)\mc B\mb{E}_{\lambda}\mc K_{\lambda})<1.
$$
However, $
s(\mc H_\la \mb E_\la-\mc I)<0
$
is equivalent to $s(\mc H_\la \mb E_\la)<1,$ and, since $\mc H_\la \mb E_\la$ is nonnegative, $s(\mc H_\la \mb E_\la)=r(\mc H_\la \mb E_\la)$. This, by \eqref{sr1}, gives the second part of (ii).

On the other hand, $\la>s(\mc A)$ gives the existence of $\mc H_\la\mb E_\la$ and yields the positivity of $\mc FR(\la,\mc A)\mc B\mb E_\la$, hence,  by the monotonicity of the spectral radius, $r(\mc H_\la\mb E_\la)<1$ implies $r(\mc G\mb E_\la)<1.$ Retracing the steps, we obtain  
$$
r(\mc H_\la \mb E_\la)<1 \quad\text{if\;and\;only\;if}\quad r(\mc FR(\la, \mc A)\mc B\mb{E}_{\lambda}\mc K_{\lambda})<1,
$$
which, by \eqref{sr1} and \eqref{sr2}, proves the second part of (i).
\end{proof}
We are ready to prove the following theorem.
\begin{thm}
   For any matrices  $\mc A, \mc B,\mc F,\mc G,$ the operator $(\mathbb{A},D(\mathbb{A}))$ generates  a $C_0$-semigroup, say, $\sem{1}$, in $\mbb X_1$. If \eqref{pos} is satisfied, $\sem{1}$ is positive.
\end{thm}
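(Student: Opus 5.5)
We intend to use the Hille--Yosida theorem for real $\la$, applied to a perturbed operator. First we remove $\mc A$: the operator $\operatorname{diag}(0,\mc A)$ is bounded on $\mbb X_1$, so by the bounded perturbation theorem it suffices to prove generation for $\mbb A_0$, which is $\mbb A$ with $\mc A=0$ (same domain). Next, since for any matrices we have $|\mc B|,|\mc F|,|\mc G|$ entrywise, we compare the resolvent with the problem whose data are $|\mc B|,|\mc F|,|\mc G|$ and $\mc A=0$; these data satisfy \eqref{pos}. For real $\la$ large enough the Neumann series \eqref{Mla} converges absolutely. Each term of \eqref{res11} is a sum of products of the matrices $\mc F,\mc B,\mc G$, of $R(\la,0)=\la^{-1}$, of $\mb E_\la$ and of the positive operator $R_0(\la)$. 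Hence $|R(\la,\mbb A_0)(\mb f,\mb g)|\le R(\la,|\mbb A_0|)(|\mb f|,|\mb g|)$ pointwise. It is therefore enough to handle the positive case. Moreover, in the positive case, once \eqref{rr1} holds (guaranteed for large $\la$), all the operators in \eqref{res11} are positive. This gives the positivity claim, and it also means the final semigroup is positive, because $\operatorname{diag}(0,\mc A)$ with Metzler $\mc A$ is a perturbation of the form positive operator plus a multiple of the identity.

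For the positive case we want a bound of the form $\|R(\la,\mbb A_0)\|\le 1/(\la-\omega)$ for large $\la$. The standard route is to renorm $\mbb X_1$ with weights: a norm $\sum_j \alpha_j\int_0^1 |u_j| + \sum_i\beta_i|v_i|$, with $\alpha_j,\beta_i>0$ chosen so that $\mbb A_0-\omega$ becomes dissipative on positive elements. This corresponds to the AL-space estimate $\langle \phi,\mbb A_0 \mb w\rangle\le\omega\langle\phi,\mb w\rangle$. We prefer the direct resolvent route, however. Integrate the resolvent equation \eqref{res1a} against a weight $e^{\theta x}$ per component. For $\mb f,\mb g\ge0$, integration by parts gives
\[
\int_0^1 e^{\theta x}(\la-\theta c_j)u_j\,dx + c_je^{\theta}u_j(1) - c_ju_j(0)=\int_0^1e^{\theta x}f_j\,dx .
\]
Adding the $\mb v$ equation weighted by $\beta$, we must control the boundary input $\mc C\gamma_0\mb u=\mc C(\mc F\mb v+\mc G\gamma_1\mb u)$ by the outflow $e^\theta\mc C\gamma_1\mb u$ together with $\la\beta\mb v$. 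Choosing $\theta$ large, so that $e^\theta c_{\min}$ dominates $\|\mc C\mc G\|+\|\mc B\|$, and choosing $\beta$ so that $\beta\la\ge$ the column sums of $\mc C\mc F$ plus margin, we obtain $\|(\mb u,\mb v)\|_w(\la-\omega)\le\|(\mb f,\mb g)\|_w$. Here $\omega$ depends on $\theta c_{\max}$ and on the $\mc F$ term. The equivalent weighted norm then gives the Hille--Yosida estimate for all powers, since it is a contraction-type bound.

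Domain density is routine: $D(\mbb A)$ contains $C_c^\infty((0,1))^m\times\{0\}$ plus the elements needed to reach arbitrary $\mb v$, for instance $(\mb u,\mb v)$ with $\mb u$ smooth and matching the boundary values. Closedness follows from the existence of the resolvent. The main obstacle is the second step, the weighted estimate: the $\mc F\mb v$ term at $x=0$ is an unbounded boundary perturbation coupling $\mb v$ to the edge mass. We expect the $\mc F\mb v$ contribution to enter as $\|\mc C\mc F\|\,|\mb v|$ and to be absorbed in the shift $\omega$ rather than through $\la$, which yields a bound of the form $\omega=\theta c_{\max}+\|\mc C\mc F\|$. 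If this fails, the fallback is to estimate $\|R(\la,\mbb A_0)\|$ directly from \eqref{res11}, using $\|\mb E_\la\|\le e^{-\la/c_{\max}}$ and $\|R_0(\la)\|\le\la^{-1}$, and to prove the Hille--Yosida bound for powers via positivity and the weighted functional.
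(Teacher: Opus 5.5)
Your proposal is correct, but it takes a different route from the paper at the one step that matters: the resolvent estimate in the positive case. The remaining ingredients coincide with the paper's:
\begin{itemize}
\item removing $\mc A$ as a bounded perturbation;
\item the domination $|R(\la,\mbb A)|\le R(\la,\mbb A_+)$, read off from \eqref{res11} and the Neumann series \eqref{Mla};
\item density via functions with $\mb u(1)=0$ and $\mb u(0)=\mc F\mb v$;
\item positivity from the explicit resolvent formula.
\end{itemize}

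\textbf{The paper's route.} It works with the norm $\|\mb u\|_{\mb c}=\sum_i\|u_i/c_i\|$, which has no spatial weight, and derives an exact identity for it in terms of the column sums $h_i$ of $\mc H_\la$. When all $h_i\ge 1$, this gives the lower bound $\|R(\la_0,\mbb A)(\mb f,\mb g)\|\ge\la_0^{-1}\|(\mb f,\mb g)\|$, and the paper then invokes Arendt's theorem on resolvent positive operators. The case $h_i<1$ is reduced to this one by enlarging columns of $\mc G$ and using $0\le R(\la,\mbb A)\le R(\la,\hat{\mbb A})$.

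\textbf{Your route.} You build an equivalent lattice norm in which $\mbb A_0-\omega$ is quasi-contractive, so the plain Hille--Yosida theorem suffices. Your computation does close. Fix $\beta>0$ and choose $\theta$ with $c_le^{\theta}\ge(\beta^{\top}\mc B)_l+(\mb 1^{\top}\mc C\mc G)_l$ for all $l$. Then, for $\mb f,\mb g\ge0$ and $\la$ large enough that the resolvent exists and is positive, the boundary terms $\bigl(c_le^{\theta}-(\beta^{\top}\mc B)_l-(\mb 1^{\top}\mc C\mc G)_l\bigr)u_l(1)$ are nonnegative and can be dropped. This leaves
\[
(\la-\omega)\Bigl(\sum_j\int_0^1e^{\theta x}u_j\,dx+\beta\cdot\mb v\Bigr)\le\sum_j\int_0^1e^{\theta x}f_j\,dx+\beta\cdot\mb g,\qquad \omega=\max\Bigl\{\theta c_{\max},\max_i\frac{(\mb 1^{\top}\mc C\mc F)_i}{\beta_i}\Bigr\}.
\]
This extends to arbitrary data through $|R(\la)x|\le R(\la)|x|$. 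It extends to arbitrary matrices through the resolvent domination, because your norm is a lattice norm.

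\textbf{One caution.} $\beta$ must be fixed independently of $\la$. Your phrase ``choosing $\beta$ so that $\beta\la\ge$ the column sums of $\mc C\mc F$'' would make the norm depend on $\la$, and then the bound could not be iterated to powers of the resolvent. The $\mc F\mb v$ term must therefore be absorbed into $\omega$, as you anticipate at the end.

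\textbf{What each route buys.} Yours is self-contained: it needs no Arendt theorem and no $\hat{\mc G}$ comparison. It also yields explicit constants in $\|G_1(t)\|\le Me^{\omega t}$ for the $\mc A=0$ part, with $M$ the norm-equivalence constant ($M=e^{\theta}$ for $\beta=\mb 1$). The paper's route instead exploits the additivity of the $\mbb X_1$ norm on the positive cone and requires no choice of exponential weights.
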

\begin{proof}
As noted earlier, for the generation theorem, we can assume $\mc A=0$. First, we show that $D(\mbb A)$ is dense in $\mbb X_1.$ Indeed, define 
\begin{equation}
C_{\mc F}:= \{\mb u\in C^1([0,1],\mbb C^m):\;\mb u(1)=0, \mb u(0)=\mc F\mb v\;\text{for\;some\;}\mb v\in \mbb C^k\}.
\label{CF}
\end{equation}
Clearly, $C_{\mc F}\times\mbb C^k\subset D(\mbb A)$. Let $(\mb f,\mb v)\in \mbb X_1$ be arbitrary and define $\mb g(x) =\mb f(x)-(1-x)\mc F\mb v.$ Since $\mb g\in L^1((0,1),\mbb C^m),$ there is a sequence $(\mb \phi_n)_{n\in \mbb N}\subset C_0^\infty((0,1),\mbb C^m)$ such that $\lim\limits_{n\to \infty} \mb \phi_n = \mb g$ in $L^1((0,1),\mbb C^m)$. Then, $\mb u_n(x):=\mb \phi_n(x)+(1-x)\mc F\mb v\in C_{\mc F}$ and $\lim\limits_{n\to \infty} \mb u_n = \mb f$ in $L^1((0,1),\mbb C^m)$. Hence, $D(\mbb A)\ni (\mb u_n,\mb v)$ converges to $(\mb f,\mb v)$ in $\mbb X_1.$ 

 Then, by \eqref{res11} or \eqref{res22}, and \eqref{Mla},   for sufficiently large  $\la\in \mbb R$, $|R(\la, \mbb A)|\leq R(\la, \mbb A_+)$, where in $\mbb A_+$ all matrices are replaced by their moduli, making $\mbb A_+$ resolvent positive. Then, by induction,
$$
 \left\| R(\lambda,\mbb{A})^{n}\right\| \leq \left\| R(\lambda,\mbb{ A}_+)^{n}\right\|, \quad n\in \mbb N,
$$
hence it suffices to prove the generation for $\mbb A_+$. Thus, in what follows, we assume that $\mc B,\mc F,\mc G$ are nonnegative. 

We re-write \eqref{formulaforv} and \eqref{formulafory} as
\be\label{res2}
\mb v = \la^{-1}(\mb g +\mc B\gamma_1 \mb u) = 
\la^{-1}\left(\mb g +\mc B \left(\mb{E}_{\lambda}\mb y+ \mathcal{C}^{-1}\int_{0}^{1}\mb{E}_{\lambda}(s,1)\mb{f}(s)ds\right)\right),
\ee
\begin{equation}\label{res4}
\begin{split}
\mb y=\gamma_0 \mb u &= 
\la^{-1}\mc F\mb g + \mc H_{\lambda}
 \left(\mb{E}_{\lambda}\mb y+\mathcal{C}^{-1}\int_{0}^{1}\mb{E}_{\lambda}(s,1)\mb{f}(s)ds \right). \end{split}
\end{equation}
Then, adding the coordinates of $\gamma_0\mb u$ in \eqref{res4} multiplied by $\lambda^{-1}$, we obtain 
\be\label{suma}
\frac{1}{\lambda}\sum_{i =1}^m y_{i}=\frac{1}{\la^2} \sum_{i =1}^m \mc F_i\mb g + \frac{1}{\lambda}\sum_{i =1}^m  h_{i}\mb e_i(0,1){y}_{i}+\frac{1}{\lambda}\sum_{i=1}^m \frac{h_i}{c_i}\int_{0}^{1}\mb e_i(s,1)f_{i}(s)ds,
\ee
where $h_{i}=\sum_{j= 1}^m h_{ji}$ are the column sums of the matrix $\mc H_{\lambda}$. Next, define the norm 
$$
\left\|\mb u\right\|_{\mb c}:=\sum\limits_{i=1}^m\left\|\frac{u_i}{c_i}\right\|_{L^1((0,1),\mathbb{R})},
$$
which is equivalent to the standard one. With this norm, we have
\begin{align*}
\left\|\mb u\right\|_{\mb c}&=\sum\limits_{i=1}^m\left(\int\limits_0^1\frac{\mb e_i(0,x)}{c_i}y_i\,dx+\int\limits_0^1\int\limits_0^x\frac{\mb e_i(s,x)}{c_i}\frac{f_i(s)}{c_i}\,ds\,dx\right)\\
&=-\frac{1}{\lambda}\sum\limits_{i=1}^m\int\limits_0^1\left(\frac{d}{dx}\mb e_i(0,x)\right)y_i\,dx+\sum\limits_{i=1}^m\int\limits_0^1\frac{f_i(s)}{c_i}\int\limits_s^1-\frac{1}{\lambda}\left(\frac{d}{dx}\mb e_i(s,x)\right)\,dx\,ds\\
&=\frac{1}{\lambda}\sum\limits_{i=1}^m\left(y_i-\mb e_i(0,1)y_i-\int\limits_0^1\mb e_i(s,1)\frac{f_i(s)}{c_i}\,ds\right)+\frac{1}{\lambda}\left\|\mb f\right\|_{\mb c}.
\end{align*}
Using \eqref{suma} in the expression for $\left\|\mb u\right\|_{\mb c}$, we have 
\begin{align*}\label{uest}
\left\|\mb u\right\|_{\mb c}&=\frac{1}{\la^2} \sum_{i =1}^m [\mc F\mb g]_i + \frac{1}{\lambda}\sum_{i =1}^m  h_{i}\mb e_i(0,1){y}_{i}+\frac{1}{\lambda}\sum_{i=1}^m h_i\int_{0}^{1}\mb e_i(s,1)\frac{f_{i}(s)}{c_i}ds\\
&-\frac{1}{\lambda}\sum\limits_{i=1}^m\left(\mb e_i(0,1)y_i+\int\limits_0^1\mb e_i(s,1)\frac{f_i(s)}{c_i}\,ds\right)+\frac{1}{\lambda}\left\|\mb f\right\|_{\mb c}\\
&=\frac{1}{\la^2} \sum_{i =1}^m [\mc F\mb g]_i+\frac{1}{\lambda}\sum\limits_{i=1}^m(h_i-1)\left(\mb e_i(0,1)y_i+\int\limits_0^1\mb e_i(s,1)\frac{f_i(s)}{c_i}\,ds\right)+\frac{1}{\lambda}\left\|\mb f\right\|_{\mb c}.
\end{align*}
Using the $\ell^1$-norm in $\mathbb{R}^k$ we clearly have
$$
\left\|\mb v\right\|=\left\|\la^{-1}\left(\mb g +\mc B \left(\mb{E}_{\lambda}\mb y+ \mathcal{C}^{-1}\int_{0}^{1}\mb{E}_{\lambda}(s,1)\mb{f}(s)ds\right)\right)\right\|\geq \frac{1}{\lambda}\left\|\mb g\right\|.
$$
Hence, if there exists $\la_0$ satisfying Proposition \ref{prop2}(ii) such that $h_i\geq 1$  for any $i\in I_e,$ then
\begin{equation*}
\left\|R(\la_0, \mbb A)\left(\begin{array}{c}\mb f\\\mb g\end{array}\right)\right\|_c = \|\mb u\|_{\mb c} + \|\mb v\|\geq \frac{1}{\la_0}(\|\mb f\|_{\mb c} + \|\mb g\|).
\end{equation*}
  Thus, we have the generation by \cite[Theorem 2.5]{Arendt}. If for some $i$ we have $h_i<1$, we replace $\mc G$ by $\mc {\hat G}$ where, for each such $i$, the entries in the $i$-th column of $\mc G$ are increased so that the sum of the entries of this column, $\hat g_i,$ satisfies $\hat g_i\geq 1$. Hence, taking $\hat {\mc H}=\la^{-1}\mc {FB} +\hat {\mc G}$, we have  also  $\hat h_i\geq 1$ irrespective of $\la$ and, if necessary, we can increase $\la$ so that $r(\mc {\hat H}\mb E_{\lambda})<1$. Hence, the resolvent of such a modified $\hat{\mbb A}$ exists and, as above, 
\bd
0\leq R(\lambda,\mbb{A})\leq R(\lambda,\mbb{\hat A}),
\ed
which proves that $\mbb{A}$ generates a semigroup. Positivity follows from $\mbb{A}$ being resolvent positive, which can be seen from the representation $\eqref{res11}$: for the case $\mc A=0$ it is straightforward, while for $\mc A$ being a Metzler matrix it is guaranteed by the fact that $R(\lambda,\mc A)$ is positive.
\end{proof}

\subsection{$L^p$ theory}
Next, we shall show that the part of the operator $(\mathbb{A},D(\mathbb{A})$) is the generator also in the space $\mbb X_p =L^p((0,1),\mathbb{C}^m)\times l^p(\mathbb{C}^k)$ for $1<p<\infty$. We will use the same symbol $\|\cdot\|_p$ to denote the norms in $L^p$ and $l^p$ -- the meaning will be clear from the context. 
\begin{thm}
The restriction $\sem{p} =(G_1(t)|_{X_p})$ is a $C_0$-semigroup whose generator $\mbb A_p$ is the part of $\mbb A$ in $\mbb X_p.$
\end{thm}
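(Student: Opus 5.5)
The plan is to show that $\mbb X_p$ is invariant under $G_1(t)$, that the restricted semigroup is locally bounded in $\|\cdot\|_p$ and strongly continuous there, and then to identify its generator as the part of $\mbb A$.

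\textbf{Invariance via the resolvent.} Since $(0,1)$ has finite measure, $\mbb X_p\hookrightarrow\mbb X_1$ continuously. Take $(\mb f,\mb g)\in\mbb X_p$ and $\la$ large and real. In \eqref{res11}, the component $\mb E_\la(0,x)\mb y$ is bounded for every $x$, and $\mb v$ is a finite-dimensional vector. The term $R_0(\la)\mb f$ is a convolution of $\mb f$ with $\mc C^{-1}e^{-\la s/c_i}\mathbbm 1_{s>0}$. By Young's inequality,
\[
\|R_0(\la)\mb f\|_p\le \frac{\max_i c_i^{-1}}{\la/c_{\max}}\|\mb f\|_p .
\]
Hence $R(\la,\mbb A)$ maps $\mbb X_p$ into $\mbb X_p$; in fact it maps $\mbb X_p$ into $W^1_p\times\mbb C^k$, because $\mb u'=\mc C^{-1}(\mb f-\la\mb u)\in L^p$.

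\textbf{Estimating the restricted resolvent.} I would avoid Hille--Yosida in $\mbb X_p$ and use a direct characterisation of the semigroup instead. For $\mb f\in L^p$, the $\mb u$-component of $G_1(t)(\mb f,\mb g)$ is, by characteristics, the shift $u_i(t,x)=f_i(x-c_it)$ for $x>c_it$. For $x<c_it$ it equals $y_i(t-x/c_i)$, where $\mb y(t)=\gamma_0\mb u(t)$ is the boundary trace. So I would first show that for $(\mb f,\mb g)\in\mbb X_p$ the trace $\mb y$ belongs to $L^p_{loc}([0,\infty))$, with
\[
\|\mb y\|_{L^p(0,T)}\le C_T\|(\mb f,\mb g)\|_p .
\]
The trace solves $\mb y(t)=\mc F\mb v(t)+\mc G\gamma_1\mb u(t)$, where $\gamma_1\mb u(t)$ is $f_i(1-c_it)$ for small $t$ and a delayed $\mb y$ later, and $\mb v$ solves an ODE driven by $\mc B\gamma_1\mb u$. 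Solving stepwise on intervals of length $\min_i\tau_i$ gives this renewal-type delay system an $L^p$ bound by induction, with constants depending only on the matrices and $T$. This yields $\|G_1(t)(\mb f,\mb g)\|_p\le Me^{\omega t}\|(\mb f,\mb g)\|_p$.

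\textbf{Strong continuity and the generator.} For smooth data in $D(\mbb A)\cap(W^1_p\times\mbb C^k)$ with compatible boundary values, the orbit is continuous in $\mbb X_p$ (it is even classical). Such data are dense in $\mbb X_p$ by the argument used for \eqref{CF}, with $C_0^\infty$ dense in $L^p$. Local boundedness then gives strong continuity on $\mbb X_p$. Finally, by the standard result on restrictions of semigroups to continuously embedded invariant subspaces (Engel--Nagel, II.2.3), the generator of $\sem{p}$ is the part of $\mbb A$ in $\mbb X_p$, namely $\{x\in D(\mbb A)\cap\mbb X_p:\mbb Ax\in\mbb X_p\}$. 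This is consistent with the resolvent computation above, since the two resolvents coincide on $\mbb X_p$ for large $\la$.

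The main obstacle is the uniform $L^p$ bound on the boundary trace $\mb y$, that is, controlling the feedback loop $\mc G\gamma_1$ together with $\mc F\mb v$. I would handle it by the stepwise (method of steps) estimate on intervals of length $\min\tau_i$, using that $\mb v$ is absolutely continuous and bounded by Gronwall.
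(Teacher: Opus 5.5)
Your proposal is correct and is essentially the paper's argument: a characteristics representation of $G_1(t)$, $L^p$ bounds on the transported part and on the boundary-generated part (your trace $\mb y$) extended by density, strong continuity on the dense set of smooth compatible data (the paper uses $C_{\mc F}$) combined with local boundedness, and the Engel--Nagel result on subspace semigroups to identify the generator. The differences are only organisational: the paper estimates only for $0\le t<T=\min_j 1/c_j$, where $\gamma_1\mb u$ is given explicitly by the initial data, so only the first step of your method-of-steps induction is needed (the semigroup property covers larger $t$), and your resolvent-invariance paragraph is not used anywhere in the argument.
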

\begin{proof}
We will follow the approach of \cite[Theorem 4.2]{JBAB1} and use the fact that $\mbb X_p\subset \mbb X_1$, hence $\sem{1}$ can be evaluated on $(\mb{\mr u},\mb {\mr v})\in \mbb X_p$ and, since it is an algebraic semigroup, it suffices to show that $\mbb X_p$ is invariant under its action and that it it strongly continuous. 

First, we note that the density of $C_{\mc F}$ in $\mbb X_p$ can be proved as for $\mbb X_1.$ Next, as in \textit{op.cit.}, $\sem{1}$ can be written down explicitly for small $t$. Similarly to  \eqref{taur}, 
$\tau_j(x):= \frac{x}{c_j}, j\in I_e,$
is the time needed to move from 0 to $x$ along $e_j$, with the inverse $\tau_j^{-1}(t)$ giving the distance from $0$ covered in time $t$. Suppose that $(\mb u,\mb v)$ is a classical solution to \eqref{mod1}, originating, e.g., from $C_{\mc F}\times\mbb C^k$. The general formula for the solution to the transport equation gives 
\begin{equation}
u_j(t,x)=\left\{\begin{array}{ll}
\mathring{u}_j(\tau_j^{-1}(\tau_j(x)-t)),\quad &\tau_j^{-1}(t)<x<1,\\
\varphi(t-\tau_j(x)),&0<x<\tau_j^{-1}(t),
\end{array}\right.
\label{uj}
\end{equation}
where $\varphi(t)=u_j(0,t)$ is a boundary condition. To make notation shorter, we denote $\phi_j(x,t) = \tau_j^{-1}(\tau_j(x)-t) = x-c_jt.$ Then, in particular, 
$$
u_j(t,1)=\mathring{u}_j(\phi_j(1,t))
$$
is well-defined for $0<t<\tau_j(1),$ and we have
$$
v_i(t)=\mathring{v}_i+\sum\limits_{r=1}^mb_{ir}\int\limits_0^tu_r(1,s)\,ds=\mathring{v}_i+\sum\limits_{r=1}^mb_{ir}\int\limits_0^t\mathring{u}_r(\phi_r(1,s))\,ds.
$$
Thus, using the boundary condition \eqref{mod1c}, we have
\begin{align*}
u_j(t,0)
&=\sum\limits_{r=1}^kf_{jr}\mathring{v}_r+\sum\limits_{r=1}^kf_{jr}\sum\limits_{l=1}^mb_{rl}\int\limits_0^t\mathring{u}_l(\phi_l(1,s))\,ds+\sum\limits_{l=1}^mg_{jl}\mathring{u}_l(\phi_l(1,t)),
\end{align*}
and thus, by \eqref{uj}, for $0<x<\tau_j^{-1}(t)$,
$$
u_j(t,x)=\sum\limits_{r=1}^kf_{jr}\mathring{v}_r+\sum\limits_{r=1}^kf_{jr}\sum\limits_{l=1}^mb_{rl}\!\!\!\int\limits_0^{t-\tau_j(x)}\!\!\!\mathring{u}_l(\phi_l(1,s))\,ds+\sum\limits_{l=1}^mg_{jl}\mathring{u}_l(\phi_l(1,t-\tau_j(x))).
$$
 Taking $T:=\operatorname{min}_j\tau_j(1)$, for $0\leq t\leq T,$ the semigroup solution to \eqref{mod1} is given by
\begin{align*}
&u_j(t,x)=\mathbbm{1}_{[\tau_j^{-1}(t),1)}\mathring{u}_j(\phi_j(x,t))\\
&\phantom{x}+\mathbbm{1}_{[0,\tau_j^{-1}(t))}\left(\sum\limits_{r=1}^kf_{jr}\mathring{v}_r
+\sum\limits_{r=1}^kf_{jr}\sum\limits_{l=1}^mb_{rl}\!\!\int\limits_0^{t-\tau_j(x)}\!\!\!\mathring{u}_l(\phi_l(1, s))\,ds+\sum\limits_{l=1}^mg_{jl}\mathring{u}_l(\phi_l(1,t-\tau_j(x)))\!\!\right),\\
&v_i(t)=\mathring{v}_i+\sum\limits_{r=1}^mb_{ir}\int\limits_0^t\mathring{u}_r(\phi_r(1,s))\,ds.
\end{align*}
We begin by showing that the above formula leaves $L^p$ invariant. Denote by $\|\cdot\|_{p,t}$ the $L_p$ norm on $[0,\tau_j^{-1}(t)]\subset [0,1].$  Since $\sum\limits_{r=1}^kf_{jr}\mathring{v}_r$ is a constant, we have
$$
\left(\left\|\sum\limits_{r=1}^kf_{jr}\mathring{v}_r\right\|_{p,t}\right)^p\leq \left(\sum\limits_{r=1}^k|f_{jr}|^q\right)^{\frac{p}{q}}\left\|\boldsymbol{\mathring{v}}\right\|_{p}^p = \|\mc F_j\|_q^p\|\mr {\mb v}\|_p^p,$$
and it follows that
\begin{align*}
&\left\|u_j(\cdot,t)\right\|_p^p=\int\limits_{\tau_j^{-1}(t)}^1\left|\mathring{u}_j(\phi_j(x,t))\right|^p\,dx\\
&\phantom{xx}+\!\!\int\limits_0^{\tau_j^{-1}(t)}\left|\sum\limits_{r=1}^kf_{jr}\mathring{v}_r\right.+\sum\limits_{r=1}^kf_{jr}\sum\limits_{l=1}^mb_{rl}\int\limits_0^{t-\tau_j(x)}\mathring{u}_l(\phi_l(1,s))\,ds
+\left.\sum\limits_{l=1}^mg_{jl}\mathring{u}_l(\phi_l(1,t-\tau_j(x)))\right|^p\!\!dx\\
&\leq\int\limits_{\tau_j^{-1}(t)}^1\left|\mathring{u}_j(\phi_j(x,t))\right|^p\,dx+\left(\|\mc F_j\|_q\|\mr {\mb v}\|_p+\sum\limits_{r=1}^k|f_{jr}|\sum\limits_{l=1}^m|b_{rl}|\left\|\int\limits_0^{t-\tau_j(\cdot)}\mathring{u}_l(\phi_l(1,s))\,ds\right\|_{p,t}\right.\\
&\phantom{xx}+\left.\sum\limits_{l=1}^m|g_{jl}|\left\|\mathring{u}_l(\phi_l(1, t-\tau_j(x)))\right\|_{p,t}\right)^p\\
&\leq \!\!\!\int\limits_{\tau_j^{-1}(t)}^1\!\left|\mathring{u}_j(\phi_j(x,t))\right|^p\,dx+2^{p-1}\|\mc F_j\|_q^p\|\mr {\mb v}\|_p^p+4^{p-1}\left(\sum\limits_{r=1}^k|f_{jr}|\sum\limits_{l=1}^m|b_{rl}|\left\|\int\limits_0^{t-\tau_j(\cdot)}\mathring{u}_l(\phi_l(1,s))\,ds\right\|_{p,t}\right)^p\!\!\!\\
&\phantom{xx}+4^{p-1}\left(\sum\limits_{l=1}^m|g_{jl}|\left\|\mathring{u}_l(\phi_l(1,t-\tau_j(\cdot)))\right\|_{p,t}\!\!\right)^p =:I_1+I_2+I_3+I_4,
\end{align*}
where we used  \cite[Lemma 2.2]{Adams}. Applying the discrete H\"older's inequality twice, we get
$$
I_3\leq  4^{p-1}\|\mc F_j\|_q^pk\max\limits_r\|\mc B_r\|_q^p\sum\limits_{l=1}^m\left\|\int\limits_0^{t-\tau_j(\cdot)}\mathring{u}_l(\phi_l(1,s))\,ds\right\|_{p,t}^p
$$
and 
$$I_4\leq4^{p-1}\|
\mc G_j\|_q^p\sum\limits_{l=1}^m\left\|\mathring{u}_l(\phi_l(1, t-\tau_j(\cdot)))\right\|_{p,t}^p.
$$
Next, 
\begin{align*}
   I_1=\int\limits_{\tau_j^{-1}(t)}^1\left|\mathring{u}_j(\phi_j(x,t))\right|^p\,dx= \int\limits_{c_jt}^1\left|\mathring{u}_j(x-c_jt)\right|^p\,dx &=\int\limits_0^{1-c_jt}|\mathring{u}_j(s)|^p\,ds\leq\left\|\mathring{u}_j\right\|_p^p.
\end{align*}
Further, using the H\"older's inequality again and noting that $\tau_j^{-1}\leq 1$ and $t-\tau_j(x)\leq T,$ 
\begin{align*}
    \left\|\int\limits_0^{t-\tau_j(\cdot)}\mathring{u}_l(\phi_l(1, s))\,ds\right\|_{p,t}^p&=\int\limits_0^{\tau_j^{-1}(t)}\left|\int\limits_0^{t-\tau_j(x)}\mathring{u}_l(\phi_l(1,s))\,ds\right|^p\,dx
    \leq T^{\frac{p}{q}}\int\limits_0^{1}\int\limits_0^T|\mathring{u}_l(1-c_ls)|^p\,ds\,dx\\&=\frac{T^{\frac{p}{q}}}{c_l}\int\limits_{0}^{1}\int\limits_{1-c_lT}^{1}|\mathring{u}_l(z)|^p\,dz\,dx\leq \frac{T^{\frac{p}{q}}}{c_l}\int\limits_{0}^1|\mathring{u}_l(z)|^p\,dz\leq\frac{T^{\frac{p}{q}}}{c_l}\left\|\mathring{u}_l\right\|_p^p.
\end{align*}
Finally, 
\begin{align*}
    &\left\|\mathring{u}_l(\phi_l(1,t-\tau_j(\cdot)))\right\|_{p,t}^p=\int\limits_0^{\tau_j^{-1}(t)}|\mathring{u}_l(\phi_l(1,t-\tau_j(x)))|^p\,dx =\frac{c_j}{c_l}\int\limits_{\phi_l(1, t)}^1|\mathring{u}_l(z)|^p\,ds\leq \frac{c_j}{c_l}\left\|\mathring{u}_l\right\|_p^p.
\end{align*}
Combining all the above inequalities, we find that
\begin{align*}
\left\|u_j(\cdot,t)\right\|_p^p&\leq\left\|\mathring{u}_j\right\|_p^p\!+\!2^{p-1}\|\mc F_j\|_q^p\left\|\boldsymbol{\mathring{v}}\right\|_{p}^p\!+\!4^{p-1}\|\mc F_j\|_q^pk\max\limits_r\|\mc B_r\|_q^p\sum\limits_{l=1}^m\frac{T^{\frac{p}{q}}}{c_l}\left\|\mathring{u}_l\right\|_p^p\\
&+4^{p-1}\|\mc G_j\|_q^pc_j\sum\limits_{l=1}^m\frac{\left\|\mathring{u}_l\right\|_p^p}{c_l}\leq D^u_j\left\|\boldsymbol{\mathring{u}}\right\|_p^p+D^v_j\left\|\boldsymbol{\mathring{v}}\right\|_{p}^p,
\end{align*}
where $D^u_{j}, D_j^v$ are certain constants depending only on $\mc C, T, p$ and the boundary matrices.
Similarly, for the norm of  $\mb v$, by the H\"older's inequality, 
\begin{align*}
|v_i(t)|^p&=\left|\mathring{v}_i+\sum\limits_{r=1}^mb_{ir}\int\limits_0^t\mathring{u}_r(\phi_r(1, s))\,ds\right|^p\leq2^{p-1}|\mathring{v}_i|^p+2^{p-1}\left(\sum\limits_{r=1}^m|b_{ir}|\int\limits_0^T|\mathring{u}_r(1-c_rs)|\,ds\right)^p\\
&\leq2^{p-1}|\mathring{v}_i|^p+2^{p-1}T^{\frac{p}{q}}\|\mc B_i\|_q^p\sum\limits_{r=1}^m\frac{1}{c_r}\int\limits^1_{0}|\mathring{u}_r(z)|^p\,dz\\
&=2^{p-1}|\mathring{v}_i|^p+2^{p-1}T^{\frac{p}{q}}\|\mc B_i\|\max_r\frac{1}{c_r}\left\|\boldsymbol{\mathring{u}}\right\|_p^p\leq K^v_i\left\|\boldsymbol{\mathring{v}}\right\|_{p}^p+K^u_i\left\|\boldsymbol{\mathring{u}}\right\|_p^p,
\end{align*}
where, as above, $K^u_{i}, K_i^v$ are certain constants depending only on $\mc C, T, p$ and the boundary matrices.
Finally, combining all the constants into a single constant, we get
\begin{equation}
\left\|(\boldsymbol{u}(\cdot,t),\boldsymbol{v}(t))\right\|_{\mbb X_p}
\leq D\left\|(\boldsymbol{\mathring{u}},\boldsymbol{\mathring{v}})\right\|_{\mbb X_p}, \quad t\in [0,T).\label{44}
\end{equation}
This estimate has been proven for classical solutions, but can be extended by density to $\mbb X_p.$

Next, we show strong continuity at 0. We begin with $\mb u$ and consider first $\mb {\mathring{u}}\in C_{\mc F}$. Then, 
\begin{align*}
    &\left\|u_j(\cdot,t)-u_j(\cdot,0)\right\|_p^p=\int\limits_{c_jt}^1|\mathring{u}_j(x-c_jt)-\mathring{u}_j(x)|^p\,dx\\
&+\int\limits_0^{c_jt}\left|\sum\limits_{r=1}^kf_{jr}\mathring{v}_r\right.+\sum\limits_{r=1}^kf_{jr}\sum\limits_{l=1}^mb_{rl}\int\limits_0^{t-x/c_j}\mathring{u}_l(1-c_ls)\,ds
+\left.\sum\limits_{l=1}^mg_{jl}\mathring{u}_l\left(1-c_l\left(t-\frac{x}{c_j}\right)\right)-\mathring{u}_j(x)\right|^p\!\!\!dx.\\
\end{align*}
The estimates are similar to \cite[Theorem 4.5]{JBAB1}, with the first term converging to 0 as $t\to 0^+$ on account of $\mb {\mr u}$ being differentiable, and hence uniformly continuous, see \eqref{CF}, and the second since the interval of integration shrinks to 0 and, as we proved above, the integrand is integrable with $p$th power.
 
The convergence of the $\mb v$ component of the semigroup is straightforward:
\begin{align*}
    |v_i(t)-v_i(0)|^p&=\left|\mathring{v}_i+\sum\limits_{r=1}^mb_{ir}\int\limits_0^t\mathring{u}_r(\phi_r(1, s))\,ds-\mathring{v}_i\right|^p\to 0, \quad \text{as}\;t\to 0.
\end{align*}
The result can be extended to $\mbb X_p$ by uniform boundedness \eqref{44} and the density of $C_{\mc F}$ in $\mbb X_p.$ The statement about the generator follows from \cite[Proposition in Section II 2.1]{ENshort}.
\end{proof}
\section{Long-term behaviour}\label{sec5}

Before we proceed, we mention that two special cases of \eqref{mod1} are well understood. If $\mc F=0,$ then \eqref{mod1a} and \eqref{mod1c} become decoupled from \eqref{mod1b} and can be studied separately as a pure network transport problem, e.g., \cite{BFNM3AS, BPChapt, batkai2017positive, KraPhysD, KSMZ, Matrai}. It is then known that when the traverse times $\tau_j$ are integer multiples of some reference time, the dynamics becomes finite-dimensional and is governed by iterations of the weighted adjacency matrix of the line graph (or its generalisation if the model is not graph realisable, see \cite[Section 2.2.5]{BPChapt}). On the other hand, if $\mc G=0,$ then, as discussed in Section  \ref{secmd}, \eqref{mod1} is related to a delay problem and was studied in such a context in \cite{Sik}. It turns out that in this case, the semigroup is eventually uniformly continuous (even differentiable) and compact, which allows us to draw several strong conclusions regarding its long-term behaviour, especially when it is positive. The problem we study is, however, an unbounded boundary perturbation of either case, which does not allow us to reach such strong conclusions. We shall show, however, that part of the results of \cite{Sik} remain available and extend them to problems that do not necessarily generate positive semigroups. 
\begin{prop}
The resolvent $R(\la,\mbb A)$ is a compact operator, and the spectrum of $\mbb A$ is discrete with no accumulation points and bounded from the right.
\end{prop}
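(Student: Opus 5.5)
The plan is to read compactness directly off the explicit resolvent formula \eqref{res11} of Proposition \ref{propres}. I will then obtain discreteness of the spectrum from the analytic Fredholm structure of the finite-dimensional equation \eqref{forv2}, and the bound on the right from generation.

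\textbf{Compactness.} Fix $\la$ with $\Re\la$ large, so that \eqref{res11} holds. Split $R(\la,\mbb A)(\mb f,\mb g)$ into three parts.
\begin{itemize}
\item The term $[R_0(\la)\mb f](x)$ in the $\mb u$ component.
\item Terms of the form $\mb E_\la(0,x)\mb z$, where $\mb z\in\mbb C^m$ depends linearly and boundedly on $(\mb f,\mb g)$ through $[R_0(\la)\mb f](1)$ and $\mb g$. These include the terms coming from $\mc M_\la$ and $\mc H_\la$.
\item The whole $\mb v$ component, which lies in $\mbb C^k$.
\end{itemize}
The second and third parts have finite-dimensional range and are bounded, since $\mb f\mapsto[R_0(\la)\mb f](1)$ is bounded on $L^1$ because the kernel is bounded. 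Hence they are compact. For the first part, $R_0(\la)$ maps $L^1((0,1),\mbb C^m)$ boundedly into $W^1_1((0,1),\mbb C^m)$, because $\mc C(R_0(\la)\mb f)'=\mb f-\la R_0(\la)\mb f$. The embedding $W^1_1(0,1)\hookrightarrow L^1(0,1)$ is compact, by Rellich--Kondrachov or directly by Arzel\`a--Ascoli via $W^1_1\subset C[0,1]$. Therefore $R(\la,\mbb A)$ is compact for one $\la$, and by the resolvent identity for every $\la\in\rho(\mbb A)$.

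\textbf{Discreteness.} A generator with compact resolvent has a spectrum consisting only of isolated eigenvalues of finite algebraic multiplicity, with no finite accumulation point; this is the standard result in \cite{ENshort}. To make this concrete, and to guarantee that $\rho(\mbb A)\neq\emptyset$, I will note that the proof of Proposition \ref{propres} shows that $\la\in\rho(\mbb A)$ whenever the finite block matrix of \eqref{forv2},
$$
\left(\begin{array}{cc}\la\mc I-\mc A & -\mc B\mb E_\la\\ -\mc F & \mc I-\mc G\mb E_\la\end{array}\right),
$$
is invertible. The reason is that $\mb f\mapsto R_0(\la)\mb f$ exists for all $\la$, so solvability of the resolvent equation reduces exactly to this system. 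The determinant $d(\la)$ of this matrix is an entire function, and it is not identically zero because it is nonzero for large real $\la$. So $\sigma(\mbb A)=\{d=0\}$ is discrete with no accumulation points, which confirms the abstract statement.

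\textbf{Bound from the right.} By the generation theorem, $\sigma(\mbb A)\subset\{\Re\la\le\omega_0\}$, where $\omega_0$ is the growth bound of $\sem{1}$. Alternatively, one can check directly that $d(\la)\ne0$ for large $\Re\la$ since $\mb E_\la\to0$.

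\textbf{Main obstacle.} The step requiring the most care is the identification $\sigma(\mbb A)=\{d=0\}$ for all $\la\in\mbb C$, and not only for large $\Re\la$. The work is in verifying that invertibility of the block system is both necessary and sufficient for $\la\in\rho(\mbb A)$. Necessity follows because a nontrivial kernel vector $(\mb v,\mb y)$ of the block matrix yields the eigenfunction $(\mb E_\la(0,x)\mb y,\mb v)\in D(\mbb A)$. Sufficiency follows from the explicit construction in \eqref{rozwrez}.
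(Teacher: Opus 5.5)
Your proof is correct. For the spectral part it follows the paper, but the compactness part is genuinely different.

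\textbf{Spectral part.} The paper's whole proof identifies $\sigma(\mbb A)$ with the zero set of the entire function $\det\mbb M_\la$ in \eqref{detA}, and uses solvability of \eqref{forv2} for large $\Re\la$ to get the bound on the right. Your version is more careful here. You check that this identification holds for every $\la\in\mbb C$: a kernel vector $(\mb v,\mb y)$ gives the eigenvector $(\mb E_\la(0,\cdot)\mb y,\mb v)\in D(\mbb A)$, and invertibility gives the resolvent via \eqref{rozwrez}. You also check that the determinant is not identically zero. The paper leaves both points implicit.

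\textbf{Compactness.} The paper's proof never addresses the first claim of the statement. You prove it by writing the resolvent as $R_0(\la)$ plus a finite-rank operator. You then use that $R_0(\la)$ maps $L^1$ boundedly into $W^1_1$, which embeds compactly into $L^1$. This fills that gap. Through the standard theory of generators with compact resolvent, it also shows that the eigenvalues have finite algebraic multiplicity, which the determinant argument does not state directly.

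\textbf{One correction.} Your alternative justification ``directly by Arzel\`a--Ascoli via $W^1_1\subset C[0,1]$'' does not work. Bounded subsets of $W^1_1(0,1)$ are bounded in $C[0,1]$ but need not be equicontinuous; take $x\mapsto\min(1,nx)$. Keep the Rellich--Kondrachov argument instead. Alternatively, use the Fr\'echet--Kolmogorov criterion with $\|\mb u(\cdot+h)-\mb u\|_{L^1}\le |h|\,\|\mb u'\|_{L^1}$, or Helly's selection theorem.
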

\begin{proof}
By the proof of Proposition \ref{propres}, the spectrum of $\mbb A$ is determined by the solvability of the system of equations \eqref{forv2}. Thus, the spectrum consists of the roots of the equation 
\begin{equation}
det \mbb M_\la :=det \left(\begin{array}{cc} \mc A -\la \mc I&\mc B\mb{E}_{\lambda}\\
\mc F&\mc G\mb{E}_{\lambda} -\mc I\end{array}\right) = 0.\label{detA}
\end{equation}
Since the entries are entire functions of $\la$, the zeros are isolated without any accumulation point, and they are bounded from the right as \eqref{forv2} is solvable by Proposition \ref{propres} for large $\Re\la$.
\end{proof}
The main issue is to ensure that all roots of \eqref{detA}, that is, the elements of $\sigma(\mbb A)$, have negative real parts. The issue with \eqref{detA} is that it is not a typical eigenvalue problem. Thus, we first convert it to a more familiar form at the cost, however, of requiring \eqref{Gass} to be satisfied. 

In the sequel, with \eqref{Gass} satisfied, we focus our attention on the operator
$
\lambda\mc I-\mc A-\mc L_{\lambda},
$
where $\mc L_{\lambda}$ is defined in \eqref{Ll}. We observe that $
\la\mapsto \mc L_\la
$
is a matrix-valued analytic function on  $\Omega =\{\la\in \mbb C:\; r(\mc G\mb E_\la)<1\}$.   Since the spectral radius is continuous (as the eigenvalues depend continuously on the entries of the matrix and the entries of $\mc G\mb E_\la$ depend continuously on $\la$, \cite[Sections 3.1.1 \& 3.1.2]{Ortega}), $\Omega$ is open and nonempty, by \eqref{Gass}. In particular, by the monotonicity of the exponential functions, there is $\bar\la\in \{-\infty\}\cup \mbb R$ such that $(\bar\la,\infty)\subset \Omega$ ($\bar\la=-\infty$ if $\mc G=0$).
It is then straightforward to prove the following proposition.
\begin{prop}\label{prop4}
Assume that \eqref{Gass} is satisfied. Then $\la \in \sigma(\mbb A)$ if and only if 
\begin{equation}
det (\la \mc I -\mc A - \mc L_\la)=0.
\label{det2}
\end{equation}
\end{prop}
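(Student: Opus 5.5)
Note first that \eqref{Gass} makes $\Omega$ a nonempty open set, and the statement should be read for $\la\in\Omega$, where $\mc K_\la$ and hence $\mc L_\la$ exist. The plan is to reduce $\la\in\sigma(\mbb A)$ to the non-solvability of the finite-dimensional system \eqref{forv2}, and then to eliminate $\mb y$ from that system using the invertibility of $\mc I-\mc G\mb E_\la$.

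The first step is to establish that $\la\in\sigma(\mbb A)$ if and only if $\det\mbb M_\la=0$ in \eqref{detA}. Fix $\la\in\mbb C$ and $(\mb f,\mb g)\in\mbb X_1$. Any solution of \eqref{res1a} in $W^1_1$ has the form \eqref{u}, with $\mb y=\gamma_0\mb u$ free. Hence the resolvent equation is equivalent to the linear system \eqref{forv2} in the unknowns $(\mb v,\mb y)\in\mbb C^k\times\mbb C^m$, with matrix $-\mbb M_\la$ (up to sign in the second block row).

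If $\det\mbb M_\la\neq0$, the system has a unique solution for every right-hand side. The resulting $(\mb u,\mb v)$ depends boundedly on $(\mb f,\mb g)$, because $R_0(\la)$ and $\mb f\mapsto[R_0(\la)\mb f](1)$ are bounded. Thus $\la\in\rho(\mbb A)$.

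If $\det\mbb M_\la=0$, take a nonzero $(\mb v,\mb y)$ in the kernel. Then $(\mb E_\la(0,\cdot)\mb y,\mb v)$ is a nonzero element of $D(\mbb A)$ with $(\la-\mbb A)(\mb u,\mb v)=0$, so $\la$ is an eigenvalue. Nonzeroness holds because $\mb y=0$ forces $\mb v\neq0$.

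The second step is a Schur-complement computation for $\la\in\Omega$. Since $\mc G\mb E_\la-\mc I=-\mc K_\la^{-1}$ is invertible, block elimination of the lower-right block gives
\begin{equation*}
\det\mbb M_\la=\det(\mc G\mb E_\la-\mc I)\,\det\bigl(\mc A-\la\mc I-\mc B\mb E_\la(\mc G\mb E_\la-\mc I)^{-1}\mc F\bigr)=\det(\mc G\mb E_\la-\mc I)\,\det(\mc A-\la\mc I+\mc L_\la),
\end{equation*}
using $(\mc G\mb E_\la-\mc I)^{-1}=-\mc K_\la$ and \eqref{Ll}. The first factor is nonzero on $\Omega$, so $\det\mbb M_\la=0$ if and only if \eqref{det2} holds.

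Equivalently, this is exactly the elimination performed in \eqref{formulafory1}–\eqref{Rleq}: the system \eqref{forv2} is uniquely solvable if and only if $\la\mc I-\mc A-\mc L_\la$ is invertible. In that case \eqref{res22} gives the resolvent for every $\la\in\Omega$ with $\det(\la\mc I-\mc A-\mc L_\la)\neq0$, not only for large $\Re\la$.

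The main obstacle is the first step, specifically the converse direction: turning singularity of $\mbb M_\la$ into a genuine eigenvector of $\mbb A$. This requires checking that the kernel vector yields a function satisfying the boundary condition in \eqref{mod2b}. That check is exactly the second row of \eqref{forv2} with $\mb f=0$. The remaining work is bookkeeping, together with the observation that the matrix identity for $\mc K_\la$ holds only where $r(\mc G\mb E_\la)<1$.
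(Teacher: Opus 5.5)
Your proposal is correct and follows essentially the paper's route: you reduce $\la\in\sigma(\mbb A)$ to singularity of the finite system \eqref{forv2}, i.e.\ of $\mbb M_\la$, and then eliminate $\mb y$ using the invertibility of $\mc I-\mc G\mb E_\la$, which is exactly the block elimination \eqref{formulafory1}--\eqref{Rleq} the paper invokes; your explicit eigenvector construction, bounded-inverse check and Schur-complement factorization of $\det\mbb M_\la$ only make rigorous what the paper states briefly. Two minor remarks: the matrix of \eqref{forv2} is exactly $-\mbb M_\la$, with no extra sign change in the second block row, and $\mc K_\la$ exists whenever $1\notin\sigma(\mc G\mb E_\la)$, not only where $r(\mc G\mb E_\la)<1$, though restricting to $\Omega$ is consistent with the paper's setup.
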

\begin{proof}
By assumption, $ \mc I- \mc G\mb{E}_{\lambda}$ is invertible, and $$\mbb M_\la\left(\begin{array}{c}\mb x_1\\\mb x_2\end{array}\right)= \left(\begin{array}{c}\mb y_1\\\mb y_2\end{array}\right)$$ can be solved by first solving 
$$
-\mc F \mb x_1 + (\mc I- \mc G\mb{E}_{\lambda})\mb x_2 = \mb y_2
$$
with respect to $\mb x_2$ and substituting the result into the first row, see \eqref{Kl} and \eqref{Rleq}. Thus, the invertibility of $\mbb M_\la$ is equivalent to the invertibility of $\la \mc I -\mc A - \mc L_\la$. In other words,  $\la \mc I -\mc A - \mc L_\la$ is not invertible if and only if \eqref{detA} is satisfied and therefore such a $\lambda$ is in the spectrum of $\mbb A.$
\end{proof}
Equation \eqref{det2} still is not a typical eigenvalue problem, and to proceed, for a fixed $\la$, we consider the invertibility of $\mu\mc I -\mc A-\mc L_\la,$ $\mu \in \mbb C.$ 
The crucial role is played by the function 
\begin{equation*}
    (\bar\la,\infty)\ni \la \mapsto s(\la):= s(\mc A+\mc L_\la)\in \{-\infty\}\cup \mbb R.
    \label{sfunct}
\end{equation*}
In a general setting, we only have the following observation. 
\begin{obs}\label{obs1}
If for some $\la_0$, $s(\la)<\la$ for all $\la \geq \la_0$, then $s(\mbb A)<\la_0$.
\end{obs}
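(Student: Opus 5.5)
The plan is to reduce the claim to Proposition \ref{prop4}. That proposition identifies $\sigma(\mbb A)$, on the region $\Omega=\{\la:\ r(\mc G\mb E_\la)<1\}$, with the set of $\la$ satisfying \eqref{det2}, that is, with those $\la$ for which $\la\in\sigma(\mc A+\mc L_\la)$. Since the hypothesis is only formulated where $s(\la)$ is defined, I will first make sure that $\la_0>\bar\la$, so that $[\la_0,\infty)\subset\Omega$.

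\emph{Real points.} For real $\la\ge\la_0$ we have, by assumption, $\la>s(\mc A+\mc L_\la)$. Hence $\la\notin\sigma(\mc A+\mc L_\la)$ and $\det(\la\mc I-\mc A-\mc L_\la)\neq 0$. By Proposition \ref{prop4}, $\la\in\rho(\mbb A)$, so $[\la_0,\infty)\cap\sigma(\mbb A)=\emptyset$. This step is immediate.

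\emph{Non-real points.} This is the main obstacle: I must show that no $\la=\alpha+i\beta$ with $\alpha\ge\la_0$ and $\beta\neq0$ solves \eqref{det2}, while the hypothesis only controls real arguments. I would first try a domination argument comparing $\mc L_\la$ with $\mc L_{\alpha}$.

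Note that $|\mb E_\la|=\mb E_\alpha$ entrywise. From the Neumann series $\mc K_\la=\sum_n(\mc G\mb E_\la)^n$, the modulus $|\mc K_\la|$ is dominated by the corresponding series built from $|\mc G|$ and $\mb E_\alpha$. Consequently $|\mc L_\la|\le |\mc B|\,\mb E_\alpha\,(\mc I-|\mc G|\mb E_\alpha)^{-1}|\mc F|$, which equals $\mc L_\alpha$ whenever \eqref{pos} holds.

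Next I would use the standard comparison for Metzler majorants. If $\mc N$ is Metzler, $|m_{ij}|\le n_{ij}$ for $i\ne j$ and $\Re m_{ii}\le n_{ii}$, then $s(\mc M)\le s(\mc N)$. This follows from $|e^{t\mc M}|\le e^{t\mc N}$, e.g.\ \cite{banpop}. Applying it with $\mc M=\mc A+\mc L_\la$ and $\mc N=\mc A+\mc L_\alpha$ gives
$$\Re\mu\le s(\mc A+\mc L_\alpha)=s(\alpha)<\alpha=\Re\la \quad\text{for every }\mu\in\sigma(\mc A+\mc L_\la).$$
In particular $\mu=\la$ is impossible. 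Thus no $\la$ with $\Re\la\ge\la_0$ belongs to $\sigma(\mbb A)$.

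\emph{Concluding and caveat.} To pass from "no spectrum in $\{\Re\la\ge\la_0\}$" to the strict inequality $s(\mbb A)<\la_0$, I would use that $\sigma(\mbb A)$ is discrete and bounded from the right. I would also use that $\la\mapsto s(\la)$ is continuous at $\la_0$, so the hypothesis still holds on $[\la_0-\epsilon,\infty)$ for some $\epsilon>0$; repeating the argument there yields $s(\mbb A)\le\la_0-\epsilon$.

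The caveat is that this argument is complete only under \eqref{pos}. For general matrices, the domination step above only works if $s(\la)$ is understood via the modulus system, i.e.\ with $\mc L_\la$ built from $|\mc B|,|\mc F|,|\mc G|$ and $\mc A$ replaced by its Metzler majorant. I expect that verifying this, or finding a substitute for it, is where the general case will require care.
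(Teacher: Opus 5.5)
Under \eqref{pos} your argument is correct. It differs from the paper's at the only non-trivial point, the non-real $\la$. The paper treats $[\la_0,\infty)$ exactly as you do via Proposition \ref{prop4}. It then concludes in one line that, ``since $\mbb A$ generates a semigroup'', $\{\Re\la\ge\la_0\}\subset\rho(\mbb A)$. That step is legitimate only because under \eqref{pos} the semigroup is positive on an $L^p$ lattice. Then $s(\mbb A)\in\sigma(\mbb A)$, so real-axis information controls the spectral bound, and this also gives the strict inequality at once.

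Your argument does not need that lattice-theoretic fact. The domination $|\mc L_\la|\le\mc L_{\Re\la}$, combined with the Metzler-majorant comparison, excludes the whole closed half-plane directly, and continuity of $s(\cdot)$ gives strictness. This route is more elementary, and unlike the paper's step it generalises correctly. Running it with $\mc A_+,|\mc B|,|\mc F|,|\mc G|$ shows the following for arbitrary real matrices, in the spirit of Theorem \ref{mstth1}: if $s(\mc A_+ +|\mc B|\mb E_\la(\mc I-|\mc G|\mb E_\la)^{-1}|\mc F|)<\la$ for all $\la\ge\la_0$ (with $\la_0$ beyond the $\bar\la$ built from $|\mc G|$), then $s(\mbb A)<\la_0$.

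Do not try to remove your caveat, however. Without positivity the observation is false as stated, so no argument can close that case, and the paper's one-line step is precisely where it fails.

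Take $k=m=1$, $c_1=1$, $\mc A=0$, $\mc G=0$ (so $\bar\la=-\infty$), and $\mc B\mc F=-K$ with $K>\pi/2$. Then $\mc L_\la=-Ke^{-\la}$ and $s(\la)=-Ke^{-\la}<0\le\la$ for all $\la\ge 0$, so the hypothesis holds with $\la_0=0$. However, by Proposition \ref{prop4}, $\sigma(\mbb A)$ is the zero set of $\la+Ke^{-\la}$. This is the characteristic function of $v'(t)=-Kv(t-1)$, which has a pair of zeros with positive real part for $K>\pi/2$; they cross the imaginary axis at $\pm i\pi/2$ when $K=\pi/2$. Hence $s(\mbb A)>0$.

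So the statement must be read under \eqref{pos}, which is where it is actually used (Theorem \ref{mstth}), or with $s(\la)$ replaced by the modulus version above.
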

\begin{proof}
Fix any $\la\geq \la_0$. From the definition, any $\mu$ with $\Re\mu >s(\la)$ satisfies $\mu\in  \rho(\mc A+\mc L_{\la})$. In particular, $\la \mc I-\mc A-\mc L_{\la}$ is invertible, hence $\la \notin \sigma(\mbb A).$ Thus, $[\la_0,\infty)\subset \rho(\mbb A)$. Since $\mbb A$ generates a semigroup, we have $\{\la \in \mbb C;\;\Re \la\geq \la_0\}\subset \rho(\mbb A)$ and thus $s(\mbb A)<\la_0.$
\end{proof}
We can deal with the reverse inequality if we require more of the spectrum. 
\begin{obs}\label{obs2}
If for some $\la_0$, $s(\la_0)> \la_0$ and $s(\la_0) \in \sigma(\mc A+\mc L_{\la_0}),$ then $s(\mbb A)> \la_0.$
\end{obs}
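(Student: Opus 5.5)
The plan is to produce a real number $\la_1>\la_0$ with $\det(\la_1\mc I-\mc A-\mc L_{\la_1})=0$. By Proposition \ref{prop4}, that gives $\la_1\in\sigma(\mbb A)$, and hence $s(\mbb A)\geq\la_1>\la_0$. I will find $\la_1$ as a fixed point of $\la\mapsto s(\la)$ on the real half-line $[\la_0,\infty)\subset(\bar\la,\infty)$, using the intermediate value theorem, and then check that at this fixed point the spectral bound is actually an eigenvalue.

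First, continuity and behaviour at infinity. For real $\la>\bar\la$ the matrix $\mc L_\la=\mc B\mb E_\la\mc K_\la\mc F$ is real and depends continuously (indeed analytically) on $\la$. The eigenvalues of a matrix depend continuously on its entries, so $\la\mapsto s(\la)=s(\mc A+\mc L_\la)$ is continuous on $(\bar\la,\infty)$. As $\la\to+\infty$ we have $\mb E_\la\to0$ and $\mc K_\la\to\mc I$, so $\mc L_\la\to0$ and $s(\la)\to s(\mc A)<\infty$. Therefore $\psi(\la):=s(\la)-\la$ is continuous, $\psi(\la_0)>0$ by hypothesis, and $\psi(\la)<0$ for large $\la$. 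The intermediate value theorem gives $\la_1>\la_0$ with $s(\la_1)=\la_1$. I would take $\la_1$ to be the smallest such point, so that $\psi>0$ on $[\la_0,\la_1)$.

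The main obstacle is upgrading $s(\la_1)=\la_1$ to $\la_1\in\sigma(\mc A+\mc L_{\la_1})$. In general only $\Re\mu=\la_1$ for some eigenvalue $\mu$, and that $\mu$ may be non-real, which is why the hypothesis $s(\la_0)\in\sigma(\mc A+\mc L_{\la_0})$ is needed. My first attempt is to propagate the property ``the spectral bound is a real eigenvalue'' along $[\la_0,\la_1]$. Let $\mu(\la)$ be the largest real eigenvalue of $\mc A+\mc L_\la$, which is the largest real root of the real polynomial $p_\la(\mu)=\det(\mu\mc I-\mc A-\mc L_\la)$. At $\la_0$ it equals $s(\la_0)$. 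Consider the set of $\la\in[\la_0,\la_1]$ with $\mu(\la)=s(\la)$. It is closed, since roots are continuous and a limit of real roots is real. To show it is also open in $[\la_0,\la_1]$, I would argue that a real rightmost eigenvalue cannot leave the real axis without colliding with another real eigenvalue. If that works, connectedness gives $\mu(\la_1)=s(\la_1)=\la_1$.

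In the setting \eqref{pos}, which is the relevant case here (by Proposition \ref{prop2}, $\mc K_\la$ and $\mc L_\la$ are nonnegative whenever $r(\mc G\mb E_\la)<1$), this step is immediate. $\mc A+\mc L_\la$ is then a Metzler matrix for every real $\la\in(\bar\la,\infty)$, and by the Perron--Frobenius theory for Metzler matrices $s(\la)$ is always an eigenvalue. The same theory shows $s(\la)$ is nonincreasing in $\la$, because $\mb E_\la$ and $\mc K_\la$ decrease. So I would present the proof in that setting. Alternatively, I would use a sign argument on $h(\la)=\det(\la\mc I-\mc A-\mc L_\la)$: it is real and continuous, and it is positive for large $\la$. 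Then I need a $\la\geq\la_0$ with $h(\la)\leq0$, which follows when $s(\la_0)$ is a real eigenvalue of odd multiplicity lying in $(\la_0,\infty)$ with no other real roots between $\la_0$ and $s(\la_0)$.
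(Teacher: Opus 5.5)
\textbf{Verdict.} Your argument is correct in the positive setting \eqref{pos} and takes a different route from the paper. The paper's one-line proof does not actually work. Your handling of general matrices has a step that fails, and it cannot be repaired: without positivity the observation is false.

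\textbf{The paper's proof and your repair.} The paper asserts that $s(\la_0)\in\sigma(\mc A+\mc L_{\la_0})$ implies $s(\la_0)\in\sigma(\mbb A)$. By Proposition \ref{prop4}, however, $\mu\in\sigma(\mbb A)$ means $\det(\mu\mc I-\mc A-\mc L_{\mu})=0$, with the same parameter in both places, and here $\mu=s(\la_0)\neq\la_0$. You rightly replace this by a fixed point $\la_1=s(\la_1)>\la_0$, obtained from the intermediate value theorem, and you correctly identify that you then need $\la_1\in\sigma(\mc A+\mc L_{\la_1})$.

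Under \eqref{pos}, each $\mc A+\mc L_\la$ with real $\la>\bar\la$ is Metzler, so $s(\la)$ is always an eigenvalue and your proof closes. In that setting the hypothesis $s(\la_0)\in\sigma(\mc A+\mc L_{\la_0})$ is in fact redundant. Your sign argument is also a valid sufficient condition in general: if $\mc A+\mc L_{\la_0}$ has an odd number of real eigenvalues in $(\la_0,\infty)$, counted with multiplicity, then $\det(\la_0\mc I-\mc A-\mc L_{\la_0})<0$, and a real point of $\sigma(\mbb A)$ lies to the right of $\la_0$.

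\textbf{The step that fails.} Your openness argument does not hold for general matrices. The rightmost real eigenvalue can merge with another real eigenvalue and leave the real axis, or be overtaken by a complex pair, so the set where $\mu(\la)=s(\la)$ need not be open.

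\textbf{Counterexample.} Take $k=2$, $m=1$, $c_1=1$, $\mc G=0$, $\mc B=\begin{pmatrix}0\\1\end{pmatrix}$, $\mc F=\begin{pmatrix}1&0\end{pmatrix}$ and $\mc A=\begin{pmatrix}0&1\\-\frac{17}{16}&\frac12\end{pmatrix}$. Then $\mc L_\la=e^{-\la}\mc B\mc F$ and
\[
\det(\la\mc I-\mc A-\mc L_\la)=\la^2-\tfrac12\la+\tfrac{17}{16}-e^{-\la},
\]
which is the characteristic function of $\ddot y-\frac12\dot y+\frac{17}{16}y=y(t-1)$.

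At $\la_0=0$ the matrix $\mc A+\mc L_0$ has characteristic polynomial $(\mu-\frac14)^2$. Hence $s(0)=\frac14>0$ and $s(0)\in\sigma(\mc A+\mc L_0)$, so the hypotheses hold.

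Now replace the delay $1$ by $\tau$. Purely imaginary zeros of $\la^2-\frac12\la+\frac{17}{16}-e^{-\tau\la}$ occur only at $\pm i\omega_{1,2}$, where $\omega_{1,2}^2=\frac12\bigl(\frac{15}{8}\mp\sqrt3\bigr)$ are the roots of $F(y)=|P(i\sqrt y)|^2-1=y^2-\frac{15}{8}y+\frac{33}{256}$, with $P(\la)=\la^2-\frac12\la+\frac{17}{16}$. The crossing direction is the sign of $F'(\omega^2)$.
\begin{itemize}
\item At $\omega_1\approx0.267$ the first crossing is at $\tau\approx0.50$, and roots move into the left half-plane.
\item At $\omega_2\approx1.343$ the first crossing is at $\tau\approx1.79$, and roots move into the right half-plane.
\end{itemize}
Since $\la=0$ is never a zero (the function equals $\frac1{16}$ there), the double unstable root $\frac14$ of the undelayed problem has crossed into $\Re\la<0$ by $\tau=1$, and nothing has re-entered. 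Hence $s(\mbb A)<0=\la_0$.

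In this example $\mc B,\mc F\ge0$ and $\mc G=0$, so only the Metzler property of $\mc A$ fails. The real eigenvalue $\frac14$ has even multiplicity, which is exactly the case your parity argument excludes.

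\textbf{What to state.} Restricting to \eqref{pos}, or to the odd-count condition, is necessary rather than merely convenient. You should state the result in that form, not as a proof of the observation for arbitrary matrices.
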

\begin{proof}
If $s(\la_0)\in \sigma(\mc A+\mc L_{\la_0})$, then also $s(\la_0)\in \sigma(\mbb A)$ and hence $s(\mbb A)\geq s(\la_0)>\la_0.$
\end{proof}
\subsection{Positive semigroups}
Precise estimates can be established if \eqref{pos} is satisfied.  First, we establish that  $R(\la,\mbb A)\geq 0$.  
\begin{prop}\label{prop5} Assume that  $\mc A$ s a Metzler matrix and $\mc B,\mc F$ and $\mc G$ are nonnegative. Then, 
$$
R(\la, \mbb A)\geq 0\;\text{if\;and\;only\;if}\; \mc A + \mc L_\la-\la\mc I\;\text{is\;Metzler\;stable}.
$$
\end{prop}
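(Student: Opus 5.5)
The natural route is through the second representation \eqref{res22} of the resolvent. Throughout, $\la$ is real and satisfies \eqref{Gass}, so that $\mc K_\la = \sum_{n\ge0}(\mc G\mb E_\la)^n\geq 0$ and hence $\mc L_\la=\mc B\mb E_\la\mc K_\la\mc F\geq 0$. The statement is read for such $\la$. We show both directions.

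\emph{Sufficiency.} Assume $\mc A+\mc L_\la-\la\mc I$ is Metzler stable. By \cite[Theorem 8.1]{banpop}, $\mc R_\la=(\la\mc I-\mc A-\mc L_\la)^{-1}$ exists and is nonnegative. By Proposition \ref{prop4}, $\la\in\rho(\mbb A)$. The system \eqref{forv2} is then uniquely solvable, and its solution is given by \eqref{res22}; the derivation there only used the invertibility of $\mc I-\mc G\mb E_\la$ and of $\la\mc I-\mc A-\mc L_\la$, not the largeness of $\la$. We then inspect each building block:
\begin{itemize}
\item $\mb E_\la(0,x)\geq 0$;
\item $[R_0(\la)\mb f]\geq 0$ for $\mb f\geq 0$;
\item $\mc K_\la,\mc R_\la,\mc B,\mc F,\mc G\geq0$.
\end{itemize}
Consequently both formulas \eqref{res2a} and \eqref{res2b} map nonnegative $(\mb f,\mb g)$ to nonnegative $(\mb u,\mb v)$, which gives $R(\la,\mbb A)\geq0$.

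\emph{Necessity.} Assume $R(\la,\mbb A)\geq0$, so in particular $\la\in\rho(\mbb A)$ and, by Proposition \ref{prop4}, $\mc R_\la$ exists. Take $\mb f=0$ and $\mb g\geq 0$ arbitrary. By \eqref{res2b}, the $\mb v$-component of $R(\la,\mbb A)(0,\mb g)$ equals $\mc R_\la\mb g$. Positivity of the resolvent therefore gives $\mc R_\la\geq 0$, that is, $(\la\mc I-\mc A-\mc L_\la)^{-1}\geq0$. Since $\la\mc I-\mc A-\mc L_\la$ is minus a Metzler matrix (off-diagonal entries $-a_{ij}-(\mc L_\la)_{ij}\le 0$), it is a Z-matrix with a nonnegative inverse. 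By the characterization of nonsingular M-matrices in \cite[Theorem 8.1]{banpop}, this is equivalent to $s(\mc A+\mc L_\la-\la\mc I)<0$, i.e.\ Metzler stability.

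\emph{Main obstacle.} The delicate point is justifying that \eqref{res22} represents the resolvent for every $\la$ in question, not merely for large $\Re\la$. I would handle this by noting that Proposition \ref{prop4} identifies $\rho(\mbb A)$ with the set of invertibility of $\la\mc I-\mc A-\mc L_\la$ within $\Omega$. On that set, the elimination carried out in the proof of Proposition \ref{propres} is purely algebraic and valid verbatim. A secondary subtlety is that, without \eqref{Gass}, $\mc K_\la$ need not be positive. The statement should therefore be understood for $\la\in(\bar\la,\infty)$, where $r(\mc G\mb E_\la)<1$ holds by the monotonicity of $\la\mapsto\mc G\mb E_\la$.
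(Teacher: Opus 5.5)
Your proof is correct and is essentially the paper's argument: under $r(\mc G\mb E_\la)<1$ one has $\mc K_\la\geq 0$, and the elimination leading to \eqref{Rleq} with back-substitution into \eqref{res22} gives sufficiency, while necessity rests on the characterisation in \cite[Theorem 8.1]{banpop} that a Metzler matrix $M$ satisfies $s(M)<0$ exactly when $(-M)^{-1}$ exists and is nonnegative. Taking $\mb f=0$ to read off $\mc R_\la\mb g$ as the $\mb v$-component of $R(\la,\mbb A)(0,\mb g)$ is a more explicit version of the paper's appeal to the nonnegative invertibility of $\mbb M_\la$.
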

\begin{proof}
By, e.g., \cite[Theorem 8.1 \& Lemma 8.4]{banpop}, it is easy to see that if $r(\mc G\mb{E}_{\lambda})<1,$ then $ (\mc I- \mc G\mb{E}_{\lambda})^{-1}\geq 0$, and then, as in the proof of Proposition \ref{prop4}, the nonnegative invertibility of $\mbb M_\la$
is equivalent to the nonnegative invertibility of $\la \mc I -\mc A - \mc L_\la$, see \eqref{Kl} and \eqref{Rleq}. The statement about nonnegativity of $R(\la,\mbb A)$ is a consequence of \eqref{Rleq} and subsequent back substitutions. \end{proof}

\begin{rem}\label{rem3} We have a stronger statement. By the proof of \cite[Proposition 3.1]{KamSal}, if $\mbb M_\la$ is Metzler stable, then both $\mc A-\la \mc I$ and  $\mc G\mb{E}_{\lambda}-\mc I$ are Metzler stable, and  the Metzler stability of $\mbb M_\la$ is equivalent to $\mc G\mb{E}_{\lambda}-\mc I$ and  $\mc A +\mc L_\la$, see \eqref{Kl}, being Metzler stable. The former, $\mc G\mb{E}_{\lambda}$  being nonnegative, is Metzler stable if and only if $ \mc I- \mc G\mb{E}_{\lambda}$ is positively invertible, which, in turn, is equivalent to $r(\mc G\mb{E}_{\lambda})<1$. \end{rem}

With Proposition \ref{prop5}, we can adapt the considerations of \cite{Sik} to prove the following theorem.

\begin{thm}\label{mstth}
If \eqref{pos} and \eqref{Gass} are satisfied, then $\omega(\mbb A) = s(\mbb A)$ and
\begin{equation}
s(\mbb A) <0\;\text{if\;and\;only\;if}\;\bar \la<0\;\text{and}\;s(\mc A + \mc B \mc K_0 \mc F)<0.
\label{impres}
\end{equation}
\end{thm}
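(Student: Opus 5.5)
For the first claim, $\omega(\mbb A)=s(\mbb A)$, I would invoke the general result that positive $C_0$-semigroups on $L^1$-type spaces (AL-spaces) have equal growth bound and spectral bound (Derndinger/Weis). $\mbb X_1$ is an AL-space, and under \eqref{pos} the semigroup $\sem{1}$ is positive by the generation theorem, so $\omega(\mbb A)=s(\mbb A)$ follows at once. The remaining work concerns \eqref{impres}. I would study the real function $s(\la)=s(\mc A+\mc L_\la)$ on $(\bar\la,\infty)$, adapting the argument of \cite{Sik}.

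First I would record the properties of this function under \eqref{pos} and \eqref{Gass}. For real $\la\in(\bar\la,\infty)$ we have $\mc K_\la=\sum_n(\mc G\mb E_\la)^n\geq 0$, and each $\mb E_\la$ is entrywise decreasing in $\la$. Hence $\mc L_\la=\mc B\mb E_\la\mc K_\la\mc F\geq0$ is entrywise nonincreasing, and $\mc A+\mc L_\la$ is Metzler. Monotonicity of the spectral bound of Metzler matrices gives that $\la\mapsto s(\la)$ is nonincreasing. Continuity of eigenvalues gives that it is continuous. By Perron–Frobenius, $s(\la)\in\sigma(\mc A+\mc L_\la)$. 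Consequently $\la\mapsto s(\la)-\la$ is continuous and strictly decreasing, and it is negative for large $\la$ (take $\la$ large in Proposition \ref{prop2} with Proposition \ref{prop5}).

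($\Leftarrow$) Assume $\bar\la<0$ and $s(\mc A+\mc B\mc K_0\mc F)<0$. Then $0\in(\bar\la,\infty)$, $\mb E_0=\mc I$, so $\mc L_0=\mc B\mc K_0\mc F$ and $s(0)<0$. By monotonicity, $s(\la)\leq s(0)<0\leq\la$ for every $\la\geq0$. Moreover, continuity gives $s(\la)<\la$ also on some $[-\e,\infty)$. Observation \ref{obs1} with $\la_0=-\e$ then yields $s(\mbb A)<-\e<0$.

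($\Rightarrow$) Assume $s(\mbb A)<0$. By positivity, $R(\la,\mbb A)\geq0$ for all real $\la>s(\mbb A)$, in particular at $\la=0$. I would then show that $0\in\Omega$, i.e. $r(\mc G)<1$, which is the same as $\bar\la<0$. This is the main obstacle, because Proposition \ref{prop5} presupposes $r(\mc G\mb E_\la)<1$. My plan is to use that $\la\mapsto r(\mc G\mb E_\la)$ is continuous and nonincreasing on the reals. Suppose $\bar\la\geq0$, so that $r(\mc G\mb E_{\bar\la})=1$ with a Perron eigenvector $\mb y\geq0$ of $\mc G\mb E_{\bar\la}$. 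I would use this $\mb y$ to produce, via \eqref{forv2}, either an eigenvalue of $\mbb A$ with real part at least $\bar\la$, or a blow-up of $\|R(\la,\mbb A)\|$ as $\la\downarrow\bar\la$. Concretely, $R(\la,\mbb A)$ contains the factor $\mc K_\la$ through \eqref{res2a}, and $\|\mc K_\la\|\to\infty$ as $\la\downarrow\bar\la$; I must check that this blow-up survives the back substitution. Either outcome contradicts $\bar\la\geq 0>s(\mbb A)$. Remark \ref{rem3} suggests a cleaner route: positivity of the resolvent at a real point should force Metzler stability of $\mbb M_\la$, and hence $r(\mc G\mb E_\la)<1$ there.

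Once $\bar\la<0$ is established, Proposition \ref{prop5} at $\la=0$ gives that $\mc A+\mc L_0$ is Metzler stable, that is, $s(\mc A+\mc B\mc K_0\mc F)<0$, which completes the proof.
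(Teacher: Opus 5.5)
Your treatment of $\omega(\mbb A)=s(\mbb A)$, of the function $\la\mapsto s(\la)$, of the implication ``$\Leftarrow$'', and of the last step of ``$\Rightarrow$'' (Proposition~\ref{prop5} at $\la=0$ once $r(\mc G)<1$ is known) is fine and follows the paper. The gap is the step you flag yourself: the implication $s(\mbb A)<0\Rightarrow\bar\la<0$ is never proved.

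A Perron vector $\mb y$ of $\mc G\mb E_{\bar\la}$ does not by itself produce an eigenvalue of $\mbb A$. For $(\mb x_1,\mb y)\in\ker\mbb M_{\bar\la}$ you would also need $\mc F\mb x_1=\mb 0$ and $(\mc A-\bar\la\mc I)\mb x_1=-\mc B\mb E_{\bar\la}\mb y$. So everything rests on the blow-up branch, and there the check you postpone is the whole difficulty. The divergence $\|\mc K_\la\|\to\infty$ is a singularity of the representation \eqref{res22}, not a priori of $R(\la,\mbb A)$, because $\mc R_\la$ also contains $\mc K_\la$ and may compensate.

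What closes it is positivity, which your plan does not use at this point. For real $\la\in(\bar\la,\bar\la+1)$ you have $\mb v\geq 0$, so \eqref{formulafory1} gives $\gamma_0\mb u=\mc K_\la(\mc F\mb v+\mc G[R_0(\la)\mb f](1))\geq \mc K_\la\mc G[R_0(\la)\mb f](1)=(\mc K_\la-\mc I)\mb E_\la^{-1}[R_0(\la)\mb f](1)$. Take a constant $\mb f$ with positive entries. Then $\mb E_\la^{-1}[R_0(\la)\mb f](1)$ is bounded below by a positive multiple of $\mb 1$, while $\mc K_\la\mb 1$ is unbounded as $r(\mc G\mb E_\la)\uparrow 1$. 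Since $\mb u\geq\mb E_\la(0,\cdot)\gamma_0\mb u$, this contradicts the boundedness of $R(\la,\mbb A)$ on the compact set $[\bar\la,\bar\la+1]\subset\rho(\mbb A)$.

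The paper's route is your ``cleaner route'', but it needs an argument rather than a ``should'', and it yields both conditions at once. At $\la=0$ one has $\mb E_0=\mc I$. Put $\mb w=\gamma_1\mb u=\gamma_0\mb u+[R_0(0)\mb f](1)$. Then system \eqref{forv2} becomes $-\mbb M_0(\mb v,\mb w)^T=(\mb g,[R_0(0)\mb f](1))^T$, with $-\mbb M_0=\left(\begin{array}{cc}-\mc A&-\mc B\\ -\mc F&\mc I-\mc G\end{array}\right)$. The right-hand side can be any nonnegative vector (take $\mb f$ constant). Moreover $\mb v\geq 0$ and $\gamma_1\mb u\geq 0$, because $R(0,\mbb A)\geq 0$ and $\mb u$ is absolutely continuous. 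Hence $(-\mbb M_0)^{-1}\geq 0$, so the Metzler matrix $\mbb M_0$ is stable.

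By Remark~\ref{rem3} (principal submatrices and Schur complements of stable Metzler matrices are stable), both $\mc G-\mc I$ and $\mc A+\mc B\mc K_0\mc F$ are Metzler stable. The first gives $r(\mc G)<1$, hence $r(\mc G\mb E_\la)<1$ for $\la\geq 0$, and $\bar\la<0$ by openness of $\Omega$. The second is the remaining condition in \eqref{impres}, which makes your appeal to Proposition~\ref{prop5} unnecessary.
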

\begin{proof}
The equality $\omega(\mbb A) = s(\mbb A)$  follows, since we deal with positive semigroups in $L^p$ spaces. To prove the other part, we extend the relevant results of \cite{Sik} to the current setting.

First, as in \cite[Lemma 6.13]{BatPia},  we observe that the function $(\mu,\la)\mapsto R(\mu, \mc A+\mc L_\la)$ is analytic on the set $\rho = \{(\mu,\la)\in \mbb C^2:\;\mu \in \rho(\mc A+\mc L_\la)\}.$ This follows as in \textit{op.cit.} from the fact that for each $(\mu_0,\la_0)\in \rho$, the function 
$$
\Delta_{\mu,\la} = (\mu\mc I - \mc A-\mc L_\la)- (\mu_0\mc I - \mc A-\mc L_{\la_0}) = (\mu-\mu_0) -\mc B(\mb E_\la(0,1)\mc K_\la-\mb E_{\la_0}(0,1)\mc K_{\la_0})\mc F
$$
is analytic and converges to 0 as $(\mu,\la)\to (\mu_0,\la_0).$ Then, it follows that $s(\cdot)$ is nonincreasing and continuous from the left on $(\bar\la,\infty)$ and continuous at any point $\la_0$ such that $s(\la_0)$ is isolated in $\sigma(\mc A+\mc L_{\la_0})\cap \mbb R,$ \cite[Proposition 6.14]{BatPia}. In fact, in our case, $s$ is a continuous function since $\sigma(\mc A+\mc L_{\la_0})$ is discrete.  Since all the above properties used in \textit{op.cit} are available for $\mc A$ and $\mc L_\la$, we can use the abovementioned properties of $s$.

If $\bar{\lambda}<0$, then, in particular, $\mc K_0$ and hence $\mc L_0=\mc B\mc K_0\mc F$ are well defined. Assume that $s(\mc A+\mc L_0)<0$. This corresponds to the case $s(0)<0$, and the monotonicity of $s$ ensures that $s(\mc A+\mc L_{\lambda})=s(\lambda)<\lambda$ for all $\lambda \geq 0$. By Observation \ref{obs1}, $s(\mathbb{A})<0$.

To prove the converse, we observe that if $R(\la,\mbb A)\geq 0$ with $s(\mbb A)<0$, then $0\in \rho(\mbb A),$ hence 
$$
-\mbb M_0 :=\left(\begin{array}{cc} -\mc A &-\mc B\\
-\mc F&\mc I-\mc G\end{array}\right)
$$
must be positively invertible. Indeed, by the proof of Proposition \ref{propres}, the inverse gives $\mb y=\gamma_0\mb u$ and $\mb v$, which must be nonnegative. For $\mb v,$ it is straightforward as $\mb v$ is the ODE part of the solution. Also, if $\gamma_0\mb u<0,$ then, since functions in the domain of $\mbb A$ are absolutely continuous, at least one component of $\mb u$ would be negative on some interval $[0,x'].$ Thus, we have 
$$-\mbb M_0^{-1}\geq 0,$$
and $\mbb M_0$ being Metzler, must satisfy $s(\mbb M_0)<0,$ by \cite[Theorem 8.1]{banpop}. Hence, it is a Metzler stable matrix, which, by Remark \ref{rem3}, yields both $\mc A+\mc L_0$ and $\mc G -\mc I$ being Metzler stable. The former proves $s(\mc A+\mc B\mc K_0\mc F)<0$. For the latter, as in the proof of Proposition \ref{prop2}, this is equivalent to $r(\mc G)<1$, since we have $s(\mc A)<0=\lambda$. By the monotonicity of exponential functions, $r(\mc G\mb E_\la)<1$ for $\lambda\geq 0$. Hence, $\bar\la<0$ as the set $\Omega$ is open.
\end{proof}
A similar reasoning yields the following result.
\begin{cor}\label{s(A)=0}
Let \eqref{pos} be satisfied and $\bar\la<0$. Then
\begin{equation}\label{stab0}
    s(\mbb A)=0\;\text{if\;and\;only\;if}\;s(\mc A+\mc L_0) =0. 
\end{equation}
\end{cor}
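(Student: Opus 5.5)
The plan is to use the function $s(\la)=s(\mc A+\mc L_\la)$ on $(\bar\la,\infty)$, which contains $0$ because $\bar\la<0$. From the proof of Theorem \ref{mstth} we already know that $s(\cdot)$ is nonincreasing and continuous there. Since \eqref{pos} holds and $r(\mc G\mb E_\la)<1$, the matrix $\mc K_\la$ is nonnegative, so $\mc L_\la\geq 0$ and $\mc A+\mc L_\la$ is Metzler for real $\la>\bar\la$. By Perron--Frobenius theory for Metzler matrices, $s(\la)$ is then a real eigenvalue of $\mc A+\mc L_\la$, i.e. $s(\la)\in\sigma(\mc A+\mc L_\la)$ whenever $s(\la)>-\infty$. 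This is what makes Observation \ref{obs2} applicable.

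\emph{Sufficiency.} Assume $s(\mc A+\mc L_0)=0$. Then $0\in\sigma(\mc A+\mc L_0)$, so $\det(0\cdot\mc I-\mc A-\mc L_0)=0$, and Proposition \ref{prop4} gives $0\in\sigma(\mbb A)$, hence $s(\mbb A)\geq 0$. For the reverse inequality, monotonicity gives $s(\la)\leq s(0)=0<\la$ for every $\la>0$. Applying Observation \ref{obs1} with any $\la_0>0$ yields $s(\mbb A)<\la_0$, and letting $\la_0\downarrow 0$ gives $s(\mbb A)\leq 0$. Therefore $s(\mbb A)=0$.

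\emph{Necessity.} Assume $s(\mbb A)=0$. If $s(\mc A+\mc L_0)<0$, Theorem \ref{mstth} (with $\bar\la<0$) would give $s(\mbb A)<0$, a contradiction. If $s(0)>0$, then $s(0)\in\sigma(\mc A+\mc L_0)$, and Observation \ref{obs2} with $\la_0=0$ gives $s(\mbb A)>0$, again a contradiction. Hence $s(\mc A+\mc L_0)=0$.

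The only delicate point is justifying that $s(\la)$ lies in the spectrum and that monotonicity applies at $\la=0$. This rests on $0\in(\bar\la,\infty)$, on the positivity $\mc K_\la\ge0$ there, and on the Metzler structure guaranteed by \eqref{pos}. I would check these explicitly before invoking the two observations.
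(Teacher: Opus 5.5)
Your sufficiency half is exactly the paper's argument. For necessity you take a different route. The paper gets $s(\mc A+\mc L_0)\geq 0$ directly: positivity gives $0=s(\mbb A)\in\sigma(\mbb A)$, and then Proposition \ref{prop4} applies. It gets $s(\mc A+\mc L_0)\leq 0$ by noting that for each $\e>0$ the resolvent $R(\e,\mbb A)$ is positive. Hence $-\mbb M_\e$ is positively invertible, $\mbb M_\e$ is Metzler stable, and Remark \ref{rem3} yields $s(\mc A+\mc L_\e)<\e$; letting $\e\to 0^+$ and using continuity of $s(\cdot)$ finishes. You replace the first step by the contrapositive of Theorem \ref{mstth}, which is fine, and the second by Observation \ref{obs2}.

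That last citation is the one step you should not take on faith. The printed proof of Observation \ref{obs2} asserts that $s(\la_0)\in\sigma(\mc A+\mc L_{\la_0})$ implies $s(\la_0)\in\sigma(\mbb A)$. But Proposition \ref{prop4} characterises $\mu\in\sigma(\mbb A)$ by $\mu\in\sigma(\mc A+\mc L_\mu)$, with the same $\mu$ in both places. The intermediate inequality $s(\mbb A)\geq s(\la_0)$ is in fact false in general. For $k=m=1$, $\mc A=\mc G=0$, $\mc B=\mc F=c_1=1$ one has $s(\la)=e^{-\la}$, so $s(0)=1$, while $s(\mbb A)$ is the real root of $\la=e^{-\la}$, approximately $0.567$.

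The conclusion you need is nevertheless true under \eqref{pos}, and your own setup almost proves it. Suppose $s(0)>0$. The function $\la\mapsto s(\la)-\la$ is continuous on $[0,\infty)$ and positive at $0$. It tends to $-\infty$, because $\mb E_\la\to 0$ and $\mc K_\la\to\mc I$, so $\mc L_\la\to 0$ and $s(\la)\to s(\mc A)$. Hence there is $\la^*>0$ with $s(\la^*)=\la^*$. Perron--Frobenius gives $\la^*\in\sigma(\mc A+\mc L_{\la^*})$, Proposition \ref{prop4} gives $\la^*\in\sigma(\mbb A)$, and so $s(\mbb A)\geq\la^*>0$. Note that Perron--Frobenius is needed at the crossing point $\la^*$, not at $\la_0=0$ as your last paragraph suggests.

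With this insertion your route is slightly more elementary than the paper's, since it needs no block matrix $\mbb M_\e$ and no appeal to Remark \ref{rem3}. It also makes the mechanism transparent: under \eqref{pos}, whenever the strictly decreasing function $\la\mapsto s(\la)-\la$ has a zero in $(\bar\la,\infty)$, that zero is $s(\mbb A)$. The corollary is the case where that zero is $0$.
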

\begin{proof}
   Since $\bar\la<0$, $\mc L_0$ is defined. If $s(\mc A+\mc L_0)=0$, then $0\in\sigma(\mc A+\mc L_0)=\sigma(\mathbb{A})$ and hence $s(\mathbb{A})\geq 0$. On the other hand, by monotonicity, $s(\lambda)<\lambda$ for all $\lambda >0$, that is, $\lambda\notin\sigma(\mc A+\mc L_{\lambda})$. Thus, $s(\mathbb{A})\leq 0$.

    Conversely, if $s(\mathbb{A})=0$, then, by positivity, $0\in \sigma (\mathbb{A})$, hence $0\in\sigma(\mc A+\mc L_0)$, which shows $s(\mc A+\mc L_0)\geq 0$. Further, $\varepsilon\in\rho(\mathbb{A})$ for any $\varepsilon>0$, that is,
    $$
-\mbb M_\e :=\left(\begin{array}{cc} -\mc A+\e\mc I &-\mc B\mb E_\e\\
-\mc F&\mc I-\mc G\mb E_\e\end{array}\right)
$$
is positively invertible. Thus, $\mbb M_\e$ being Metzler, must satisfy $s(\mbb M_\e)<0,$ by \cite[Theorem 8.1]{banpop}. Hence, it is a Metzler stable matrix, which, by Remark \ref{rem3}, yields $\mc A-\e\mc I+\mc L_\e$ being Metzler stable. Again, by \cite[Theorem 8.1]{banpop}, $s(A-\e\mc I+\mc L_\e)<0$ and taking the limit $\varepsilon\to 0^+$ yields $s(\mc A+\mc L_0)\leq 0$ by continuity of $s$ and $-\e\mc I +\mc L_\e\to \mc L_0$ as $\e\to 0^+$.
\end{proof}
We note a slight difference in the formulation of Corollary \ref{s(A)=0} compared to Theorem \ref{mstth}: here, the condition $s(A)=0$ does not imply $\bar{\lambda}<0$ and hence the latter must be assumed also in the necessary condition for the existence of $\mc L_0$.

\subsubsection{Application to the migration problem}\label{cons}

Let us apply \eqref{impres} in the context of Section  \ref{secmd}. We consider a more general version of \eqref{de1}, where, as before, we assume that each node $i$ is occupied by a single population $v_i, i\in I_v,$ but the population incoming to a node can interact with the residents of the node or bypass it. 

Accordingly, we write $\Phi^+ = \Phi_p^++\Phi_q^+$, where each nonzero entry of $\Phi^+$ is split as 
\begin{equation}\label{split}
\phi^+_{ij} = p_{ij}+q_{ij}, j\in E^+_i,\end{equation} with $p_{ij}+q_{ij}\leq 1$ that allows for some loss of the agents.  If we assume that there is no internal dynamics at the nodes (the population only changes due to immigration and emigration), then \eqref{de1} can be written  as 
\begin{subequations}
    \label{de2}
    \begin{equation}
    \mb u_t= -  \mc C\mb u_x, \label {de2a}
    \end{equation}
    \begin{equation}
    \mb v_t (t)=  -\mc M_d\mb v (t)+ \mc B \mb u(t,1)=-\mc M_d\mb v (t)+ \Phi^+_p\mc C \mb u(t,1), \label{de2b}
    \end{equation}
    \begin{equation}
     \mb u(t,0)=\mc F\mb v(t) + \mc G\mb u(t,1) = \mc C^{-1}\Phi^{-}_{w_1}\mc M_d\mb v(t) + \mc C^{-1}\Phi^{-}_{w_2}\Phi^+_q\mc C\mb u(t,1), \label{de2c}
    \end{equation}
    \end{subequations}
where, recall, the weights $w_1$ and $w_2$ are the weights attached to outgoing edges, see \eqref{om1}, and $\mc G$ is a weighted adjacency matrix of the line graph of $\mathbb G$.   We observe that in each column $i$  of $\Phi^-$, $\phi_{ji}^-\neq 0$ only for $j\in E^-_i.$  Moreover, the redistribution scheme can be different for the residents leaving a node and the transient travellers, which is why the weights can be different in $\mc F$ and $\mc G$, but both $w_1$ and $w_2$ satisfy  \eqref{sumom}. 
In particular,
$$
\mc K_0 = (\mc I-\mc C^{-1}\Phi_{w_2}^-\Phi^+_{q}\mc C)^{-1},
$$
and its existence is proved in the next lemma.
\begin{lem}\label{lem2}
The following conditions are equivalent:
\begin{itemize}
    \item[(i)] $\mc K_0 \geq 0$,
    \item[(ii)] $q_{ri}<1$ for at least one $i\in I_e$ (hence $i\in E^+_r$) such that   $w_{q,i}\neq 0$ for some Perron eigenvector $\mb w_q$ of $\Phi_{w_2}^-\Phi^+_{q}$.
\end{itemize}
\end{lem}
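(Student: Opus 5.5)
Since $\mc K_0=(\mc I-\mc C^{-1}\Phi_{w_2}^-\Phi^+_q\mc C)^{-1}$ and $\mc C^{-1}\Phi_{w_2}^-\Phi^+_q\mc C$ is similar to $\mc N:=\Phi_{w_2}^-\Phi^+_q\geq 0$ via a positive diagonal matrix, condition (i) is equivalent to $r(\mc N)<1$. We invoke \cite[Theorem 8.1]{banpop}: for a nonnegative matrix, $\mc I-\mc N$ is positively invertible if and only if $r(\mc N)<1$, and conjugation by the positive diagonal $\mc C$ preserves both nonnegativity of the inverse and the spectral radius. So the plan is to prove that $r(\mc N)<1$ if and only if (ii) holds.

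First we compute the column sums of $\mc N$. By Lemma \ref{lemn} (with $\Phi^-$ replaced by the weighted $\Phi^-_{w_2}$ and $\Phi^+$ by $\Phi^+_q$), the entry $n_{ji}$ is nonzero only if $e_i$ enters some node $\nu_r$ and $e_j$ leaves $\nu_r$, and then $n_{ji}=w_{2,jr}q_{ri}$. Summing over $j$ and using \eqref{sumom} gives $\sum_j n_{ji}=q_{ri}$ for $i\in E^+_r$, provided $\nu_r$ is not a sink. At a sink the column sum is $0$, which we treat as a "loss". Hence $\mb 1^T\mc N=\mb q^T$ with $q_i:=q_{ri}\le 1$, i.e.\ $\mc N^T\mb 1\le \mb 1$, and in particular $r(\mc N)\le 1$.

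Let $\mb w_q\geq 0$ be a Perron eigenvector, $\mc N\mb w_q=r(\mc N)\mb w_q$. Pairing with $\mb 1$ gives
$$r(\mc N)\,\mb 1^T\mb w_q=\mb 1^T\mc N\mb w_q=\sum_i q_i w_{q,i}.$$
Consequently,
$$(1-r(\mc N))\sum_i w_{q,i}=\sum_i (1-q_i)w_{q,i}.$$
If (ii) holds, the right-hand side is strictly positive, so $r(\mc N)<1$. Conversely, if $r(\mc N)<1$, the left-hand side is positive for every Perron eigenvector, so some $i$ with $w_{q,i}\ne0$ has $q_i<1$, which is (ii).

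The main obstacle is the bookkeeping of the "for some Perron eigenvector" quantifier in the reducible case, together with the convention that $q_{ri}<1$ must also cover arcs ending at sinks (effective column sum $0$). The identity above holds for every nonnegative eigenvector associated with $r(\mc N)$, so we only need to state carefully that (ii) for one such vector forces $r<1$, while $r<1$ gives (ii) for all of them. We would also check the degenerate case $r(\mc N)=0$, where the argument remains valid.
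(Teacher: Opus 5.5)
Your proof is correct and follows the paper's argument: reduce via the diagonal similarity by $\mc C$ to $r(\Phi_{w_2}^-\Phi_q^+)<1$, compute the column sums $q_{s_ii}$ using \eqref{sumom}, and pair a Perron eigenvector with $\mb 1$. Writing this as the exact identity $(1-r)\sum_i w_{q,i}=\sum_i(1-q_i)w_{q,i}$ handles both directions and the ``for some Perron eigenvector'' quantifier a bit more cleanly than the paper does. Your sink caveat is also a legitimate refinement, since the paper's column-sum computation tacitly assumes there are no sinks, as \eqref{sumom} already requires.
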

\begin{proof}
First, we observe that
$$
\mc K_0=(\mc C^{-1}\mc C-\mc C^{-1}\Phi_{w_2}^-\Phi^+_{q}\mc C)^{-1}=\mc C^{-1}(\mc I-\Phi_{w_2}^-\Phi^+_{q})^{-1}\mc C
$$
and thus $\mc K_0$ exists and is positive if and only if so is the middle factor, which in turn is equivalent to $r_q:=r(\Phi_{w_2}^-\Phi^+_{q})<1.$ 
The matrix $\Phi_{w_2}^-\Phi^+_{q}$ is a weighted adjacency matrix of  $L(\mbb G)$ and hence, by Lemma \ref{lemn} and the discussion above it, its nonzero entries are of the form $ w_{2,jr}q_{ri}$.  Using the fact that the weights sum up to 1, we obtain 
\begin{equation}
\mb 1\cdot \Phi_{w_2}^-\Phi^+_q = (q_{s_{1}1},\ldots,
q_{s_mm})\leq (1,\ldots,1) ,
\label{subper'}
\end{equation}
where $\mb 1 = (1,\ldots,1)$ and $s_j$ is the index of the vertex determined by the column $j$ (that is,  $s_j$s for $j\in E^+_r$ are equal). Now,  $\Phi_{w_2}^-\Phi^+_q$ is a nonnegative, nonzero matrix, and thus it has a dominant eigenvalue equal to its spectral radius $r_q$ and corresponding eigenvectors $\mb w_q \geq 0$ (in general, there may be multiple eigenvectors if $\mbb G$ is not strongly connected.) Multiplying \eqref{subper'} by an eigenvector $\mb w_q$, we obtain
$$
\mb 1\cdot \Phi_{w_2}^-\Phi^+_q\mb w_q = r_q\mb 1\cdot \mb w_q = (q_{s_11},\ldots,q_{s_mm})\cdot \mb w_q\leq \mb 1\cdot \mb w_q,
$$
hence
$$
r_q\leq \frac{\sum\limits_{i=1}^m q_{s_ii}w_{q,i}}{\sum\limits_{i=1}^m w_{q,i}}\leq 1.
$$
Thus, since $0\leq q_{pl}\leq 1, p\in I_v,l\in I_e$, if $q_{si}<1$ for $i$ such that $w_{q,i}\neq 0$, then $r_q<1.$ Clearly, if $q_{si}=1$ for all such $i$, then $r_q=1.$  \end{proof}

\begin{thm}\label{thm5}
Assume that Lemma \ref{lem2}(ii) holds and that $\mc m_{ii}\neq 0, i\in I_v$ (that is, there are no sinks in $\mbb G$). Then $s(\mbb A)<0$ 
if and only if  $p_{ri}+q_{ri}<1$ for such $i\in I_e$ (hence $i\in E^+_r$) that   $w_{\Phi,i}\neq 0$ for some Perron eigenvector $\mb w_{\Phi}$ of $\Phi_{w_1}^-\Phi^+_{p} +\Phi_{w_2}^-\Phi^+_{q}$.
\end{thm}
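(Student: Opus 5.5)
The plan is to reduce the statement to Theorem \ref{mstth} and then to a spectral radius computation for a nonnegative matrix on the line graph $L(\mbb G)$. That computation uses the same column-sum argument as Lemma \ref{lem2}.

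\emph{Checking the hypotheses of Theorem \ref{mstth}.} In \eqref{de2} we have $\mc A=-\mc M_d$, which is diagonal and hence Metzler. The other matrices are $\mc B=\Phi^+_p\mc C\geq 0$, $\mc F=\mc C^{-1}\Phi^-_{w_1}\mc M_d\geq 0$ and $\mc G=\mc C^{-1}\Phi^-_{w_2}\Phi^+_q\mc C\geq 0$, so \eqref{pos} holds. By Lemma \ref{lem2}(ii) and its proof, $r(\mc G)=r(\Phi^-_{w_2}\Phi^+_q)<1$. By monotonicity of the exponentials, $r(\mc G\mb E_\la)\leq r(\mc G)<1$ for all $\la\geq 0$. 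Since $\Omega$ is open, this gives \eqref{Gass} and $\bar\la<0$. Theorem \ref{mstth} then tells us that $s(\mbb A)<0$ if and only if $s(\mc A+\mc B\mc K_0\mc F)<0$.

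\emph{Simplifying the matrix.} Write $P:=\Phi^-_{w_1}\Phi^+_p$, $Q:=\Phi^-_{w_2}\Phi^+_q$ and $\mc K_0=\mc C^{-1}(\mc I-Q)^{-1}\mc C$. The factors of $\mc C$ cancel, and we obtain
\begin{equation*}
\mc A+\mc B\mc K_0\mc F=-\mc M_d+\Phi^+_p(\mc I-Q)^{-1}\Phi^-_{w_1}\mc M_d .
\end{equation*}
The no-sink assumption gives $\mc m_{ii}\neq 0$, so $\mc M_d>0$ is invertible and $-\mc M_d$ is Metzler stable. The second term is nonnegative, so this is a regular splitting. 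By \cite[Theorem 3.29]{Varga} or \cite[Theorem 8.5]{banpop}, stability is equivalent to
\begin{equation*}
r\bigl(\Phi^+_p(\mc I-Q)^{-1}\Phi^-_{w_1}\mc M_d\mc M_d^{-1}\bigr)<1 .
\end{equation*}
Using $r(\mc R\mc S)=r(\mc S\mc R)$, this spectral radius equals $r(P(\mc I-Q)^{-1})$. Next consider $\mc I-Q-P=(\mc I-Q)-P$. Here $\mc I-Q$ is positively invertible, since $r(Q)<1$, and $P\geq 0$, so this is again a regular splitting, exactly as in the proof of Proposition \ref{prop2}. Hence $r(P(\mc I-Q)^{-1})<1$ if and only if $s(P+Q-\mc I)<0$, that is, if and only if $r(P+Q)<1$, because $P+Q\geq 0$.

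\emph{Analysing $r(P+Q)<1$.} The matrix $P+Q=\Phi_{w_1}^-\Phi^+_{p}+\Phi_{w_2}^-\Phi^+_{q}$ is a weighted adjacency matrix of $L(\mbb G)$. By Lemma \ref{lemn}, column $i$ of $P+Q$ has entries $w_{1,jr}p_{ri}+w_{2,jr}q_{ri}$ for $j\in E^-_r$, where $i\in E^+_r$. The no-sink assumption makes $E^-_r\neq\emptyset$, and both weight families satisfy \eqref{sumom}. Therefore
\begin{equation*}
\mb 1\cdot(P+Q)=(p_{s_11}+q_{s_11},\ldots,p_{s_mm}+q_{s_mm})\leq \mb 1 .
\end{equation*}
Take a Perron eigenvector $\mb w_\Phi\geq 0$ with $(P+Q)\mb w_\Phi=r\mb w_\Phi$ and multiply the above identity by it. This gives
\begin{equation*}
r\sum_i w_{\Phi,i}=\sum_i(p_{s_ii}+q_{s_ii})w_{\Phi,i}\leq\sum_i w_{\Phi,i}.
\end{equation*}
If some $i$ with $w_{\Phi,i}\neq 0$ has $p_{s_ii}+q_{s_ii}<1$, the inequality is strict and $r<1$. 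Conversely, if $r<1$, the equality $r\sum_i w_{\Phi,i}=\sum_i(p+q)_{s_ii}w_{\Phi,i}$ is incompatible with $p_{s_ii}+q_{s_ii}=1$ on the whole support of $\mb w_\Phi$. So every Perron eigenvector, and in particular some Perron eigenvector, has a support index with $p+q<1$. This establishes both directions.

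\emph{Main obstacle.} I expect the delicate step to be the middle one: converting $s(-\mc M_d+\Phi^+_p(\mc I-Q)^{-1}\Phi^-_{w_1}\mc M_d)<0$ into $r(P+Q)<1$ with the correct chain of regular splittings. In particular, the cyclic reordering of products must keep nonnegativity, and invertibility of $\mc M_d$, which is where "no sinks" enters, must be used in the right place. I also need to check that the quantifier "for some Perron eigenvector" in the statement is consistent with the equality case when $\mbb G$ is not strongly connected.
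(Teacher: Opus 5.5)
Your proposal is correct and follows essentially the same route as the paper. Both arguments reduce via Theorem \ref{mstth}, use the regular splitting with stable part $-\mc M_d$, reorder cyclically to $r(P(\mc I-Q)^{-1})<1$, apply a second regular splitting $(\mc I-Q)-P$ to reach $r(P+Q)<1$, and finish with the column-sum/Perron-eigenvector argument; you are slightly more careful than the paper in explicitly verifying $\bar\la<0$ from $r(\mc G)<1$ and in showing that the ``some'' versus ``every'' Perron eigenvector quantifier is harmless.
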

\begin{proof}
Since, by assumption, the condition (i) of Lemma \ref{lem2} is valid, we can apply Theorem \ref{mstth} and \eqref{impres}  takes the form 
\begin{equation}
s(-\mc M_d + \Phi^+_p\mc C\mc K_0\mc C^{-1}\Phi^-_{w_1}\mc M_d)=s(-\mc M_d + \Phi^+_p(\mc I-\Phi_{w_2}^-\Phi^+_{q})^{-1}\Phi^-_{w_1}\mc M_d)<0.
\label{im1}
\end{equation}
By assumption, $-\mc M_d$ is a stable Metzler matrix and $\Phi^+_p\mc C\mc K_0\mc C^{-1}\Phi^-_{w_1}\mc M_d\geq 0$, so we can again use \cite[Theorem 8.5]{banpop} to claim that \eqref{im1} is equivalent to $r(\Phi^+_p(\mc I-\Phi_{w_2}^-\Phi^+_{q})\Phi^-_{w_1})<1$ and, using the independence of the spectral radius of the order of multiplication, we must have 
\begin{equation}\label{eq10}
r(\Phi^-_{w_1}\Phi^+_{p}(\mc I-\Phi_{w_2}^-\Phi^+_{q})^{-1})<1.\end{equation}
As in the proof of Proposition \ref{prop2} (see \eqref{sr1}), we observe that 
$
\Phi^-_{w_1}\Phi^+_{p} + \Phi_{w_2}^-\Phi^+_{q} -\mc I
$
is a regular splitting and thus \eqref{eq10} is equivalent to 
$$
s(\Phi^-_{w_1}\Phi^+_{p} + \Phi_{w_2}^-\Phi^+_{q} -\mc I)<0, 
$$
that is, $s(\Phi^-_{w_1}\Phi^+_{p} + \Phi_{w_2}^-\Phi^+_{q})<1$, or -- again on account of \cite[Theorem 8.5]{banpop} -- to
$$
r_\phi:=r(\Phi_{w_1}^-\Phi^+_p+\Phi_{w_2}^-\Phi^+_q)<1.
$$
Both matrices are weighted adjacency matrices of $L(\mbb G)$, hence, as in the proof of Lemma \ref{lem2}, nonzero entries of $\Phi_{w_1}^-\Phi^+_p+\Phi_{w_2}^-\Phi^+_q$ are of the form $w_{1,jr}p_{ri}+ w_{2,jr}q_{ri}, i\in E^+_r$. Thus, similarly, 
\begin{equation}
\mb 1\cdot (\Phi_{w_1}^-\Phi^+_p+\Phi_{w_2}^-\Phi^+_q) = \left(\begin{array} {c}p_{r_11}+q_{r_11}\\
\vdots\\p_{r_mm}+q_{r_mm}\end{array}\right)\leq \left(\begin{array} {c}1\\
\vdots\\1\end{array}\right). 
\label{subper}
\end{equation}
 Let $\mb w_\Phi\geq 0$ be a Perron eigenvector  of $\Phi_{w_1}^-\Phi^+_p+\Phi_{w_2}^-\Phi^+_q$.  Multiplying \eqref{subper} by $\mb w_\Phi$, we obtain
$$
\mb 1\cdot (\Phi_{w_1}^-\Phi^+_p+\Phi_{w_2}^-\Phi^+_q)\mb w_\Phi = r_\Phi\mb 1\cdot \mb w_\Phi 
= \sum\limits_{i=1}^m (p_{r_ii}+q_{r_ii})w_{q,i}
\leq \mb 1\cdot \mb w_\Phi.
$$
Thus, as in Lemma \ref{lem2}, for $r_\Phi=s(\mbb A)<0$, it is necessary and sufficient that $p_{ri}+q_{ri}<1$ for at least one $i$ for which $w_{\Phi,i}>0$. 
\end{proof}

\begin{rem} If there is a strictly positive Perron (right) eigenvector $\mb w_q$ of the respective matrices, the assumptions of Lemma \ref{lem2} and Theorem \ref{thm5} are satisfied if $q_{ri}<1$ (respectively $p_{ri}+q_{ri}<1$) for at least one pair $r, i.$ This is the case when $\mbb G$ is strongly connected, and so is $L(\mbb G)$, hence $\Phi^-\Phi^+$ is irreducible (and this property carries over to the weighted matrices).  

In  the migration model, $q_{ri}<1$ means that some agents moving along $e_i$ join the resident population at $\nu_r,$ and $p_{ri}+q_{ri}<1$ implies that there is a loss of agents at the splitting point. 

There are reducible matrices with a strictly positive right eigenvector, but, since we know that in our case $\mb 1$ is a positive left eigenvector, such matrices must have block diagonal form, that is, $\mbb G$ is disconnected, \cite[Theorems III.6 \& 7]{Gant}. Otherwise, for $r_q<1$ (resp. $r_\Phi<1$), it suffices to ensure that $q_{ri}<1$ (resp. $p_{ri}+ q_{ri}<1$) for at least one index corresponding to strong components of the line graph, see \cite{BanLAA, BNNHM}. 

On the other hand, if  $q_{ri}<1$ (resp. $p_{ri}+ q_{ri}<1$) for all $r\in I_v,i\in E^+_{r},$ then $r_q<1$ (resp. $r_\Phi<1$) irrespective of the structure of $\mbb G.$
\end{rem}
\begin{rem}\label{rem7}
We can apply the above considerations to \eqref{stab0} using \cite[Corollary 8.6]{banpop}, which extends \cite[Theorem 8.5]{banpop}, used in the case of strict inequality in \eqref{eq10}, to the case of weak inequality. Then, from \eqref{subper}, we see that $s(\mbb A)=0$ if  $p_{rj}+q_{rj}=1$ for all $r\in I_v$ and $j\in E^+_r.$ 
\end{rem}

The statement of the remark can be strengthened as follows.

\begin{thm}
Assume in \eqref{split} we have  $p_{ij}+q_{ij}=1$, for all $j\in E^+_i, i \in I_v.$  Then, the semigroup $(G_1(t))_{t\geq 0}$ solving \eqref{de2} is stochastic in $\mbb X_1$. 
\end{thm}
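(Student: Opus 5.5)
We want to show that $(G_1(t))_{t\geq 0}$ is positive and that $\|G_1(t)(\mb u,\mb v)\|_{\mbb X_1}=\|(\mb u,\mb v)\|_{\mbb X_1}$ for all $(\mb u,\mb v)\in \mbb X_{1,+}$. Positivity is already available: \eqref{pos} holds for \eqref{de2}, since $\mc A=-\mc M_d$ is diagonal (hence Metzler) and $\mc B=\Phi^+_p\mc C$, $\mc F=\mc C^{-1}\Phi^-_{w_1}\mc M_d$, $\mc G=\mc C^{-1}\Phi^-_{w_2}\Phi^+_q\mc C$ are nonnegative. So the work lies in the norm identity. Because $\mbb X_1$ is an AL-space, the norm is additive and linear on the positive cone. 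It therefore suffices to prove $\int_{\mbb X_1}\mbb A(\mb u,\mb v)=0$ for every $(\mb u,\mb v)\in D(\mbb A)$, where $\int_{\mbb X_1}(\mb u,\mb v):=\sum_j\int_0^1 u_j\,dx+\sum_i v_i$ is the positive linear functional extending the norm from the cone. Once this holds, for $(\mb{\mr u},\mb{\mr v})\in D(\mbb A)\cap \mbb X_{1,+}$ we get $\frac{d}{dt}\|G_1(t)(\mb{\mr u},\mb{\mr v})\|=\int_{\mbb X_1}\mbb A G_1(t)(\mb{\mr u},\mb{\mr v})=0$, since the orbit stays positive and in the domain. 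The identity then extends to all of $\mbb X_{1,+}$ by density of $D(\mbb A)\cap\mbb X_{1,+}$ in $\mbb X_{1,+}$ (for instance via $\la R(\la,\mbb A)\mb z\to \mb z$ with $R(\la,\mbb A)\geq0$) together with continuity of the norm.

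The key computation goes as follows. For $(\mb u,\mb v)\in D(\mbb A)$ we have
$$\sum_j\int_0^1 -c_j u_j'\,dx=\mb 1\cdot\mc C\gamma_0\mb u-\mb 1\cdot \mc C\gamma_1\mb u,$$
and, by the boundary condition \eqref{de2c},
$$\mc C\gamma_0\mb u=\Phi^-_{w_1}\mc M_d\mb v+\Phi^-_{w_2}\Phi^+_q\mc C\gamma_1\mb u.$$
Next I use the column-sum facts. By \eqref{sumom}, together with the assumption that there are no sinks (so every $E^-_i\neq\emptyset$, which is implicit in the setting of \eqref{de2}; I will state it), we get $\mb 1\cdot\Phi^-_w=\mb 1$ in $\mbb R^k$. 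Each column of $\Phi^+$ has a single $1$, and by the hypothesis $p_{ij}+q_{ij}=1$ we obtain $\mb 1\cdot\Phi^+_p+\mb 1\cdot\Phi^+_q=\mb 1$ in $\mbb R^m$. Substituting these gives
$$\mb 1\cdot\mc C\gamma_0\mb u=\mb 1\cdot\mc M_d\mb v+\mb 1\cdot\Phi^+_q\mc C\gamma_1\mb u.$$
The $\mb v$-component contributes
$$\mb 1\cdot(-\mc M_d\mb v+\Phi^+_p\mc C\gamma_1\mb u).$$
Summing both contributions, the $\mc M_d\mb v$ terms cancel. The remaining terms are $(\mb 1\cdot\Phi^+_q+\mb 1\cdot\Phi^+_p-\mb 1)\cdot\mc C\gamma_1\mb u=0$, so the total vanishes.

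The main obstacle I expect is the justification step rather than the algebra: making rigorous that $t\mapsto\|G_1(t)\mb z\|$ is differentiable with the stated derivative. This follows for $\mb z\in D(\mbb A)$ because $\int_{\mbb X_1}$ is a bounded functional and $t\mapsto G_1(t)\mb z$ is $C^1$ in $\mbb X_1$. A second point is checking that positive domain elements are dense in the positive cone, for which I would use the resolvent. Alternatively, I could work through the resolvent directly: show $\int_{\mbb X_1}\la R(\la,\mbb A)\mb z=\int_{\mbb X_1}\mb z$ for $\mb z\geq0$, which follows from the same identity applied to $R(\la,\mbb A)\mb z$, and then pass to the semigroup using the exponential formula $G_1(t)\mb z=\lim_n(\tfrac nt R(\tfrac nt,\mbb A))^n\mb z$.
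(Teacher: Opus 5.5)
Your proposal is correct and follows essentially the paper's argument. You show that the total mass has zero derivative along nonnegative classical solutions, phrased via the column-sum identities $\mb 1\cdot\Phi^-_w=\mb 1$ and $\mb 1\cdot(\Phi^+_p+\Phi^+_q)=\mb 1$ where the paper sums node by node using Lemma \ref{lemn} and \eqref{sumom}, and then extend to $\mbb X_{1,+}$ by density of positive domain elements (you via $\la R(\la,\mbb A)$, the paper via $n\int_0^{1/n}G_1(s)\,ds$); your explicit no-sink assumption is exactly what the paper uses implicitly when it invokes \eqref{sumom} at every node.
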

\begin{proof}
First, assume that $(t,x)\mapsto (\mb u(t,x), \mb v(t))$ is a nonnegative (classical) solution to \eqref{de2}. Integrating \eqref{de2b} and denoting by $N_j:=\int_0^1u_j(t,x)dx$ the population on $e_j,$ we get 
 \begin{equation}
 \frac{d}{dt}N_j = -c_ju_j(t,1)+c_ju_j(t,0), \quad j\in I_e.
 \label{ceq1}
 \end{equation}
 Now, for a given $i\in I_v$, we sum \eqref{ceq1} over $j\in E^-_i,$ and, using \eqref{de2c}, Lemma \ref{lemn} and \eqref{sumom}, we get
 \begin{equation*}
 \begin{split}
 \frac{d}{dt}\sum\limits_{j\in E^-_i}N_j &= -\sum\limits_{j\in E^-_i}c_ju_j(t,1)+\sum\limits_{j\in E^-_i}c_ju_j(t,0)\\
 &= -\sum\limits_{j\in E^-_i}c_ju_j(t,1)+\mc m_{ii}v_i + 
 \sum\limits_{r\in E^+_i}\phi^+_{q,ir} c_ru_r(t,1), \quad i \in I_v.
\end{split}
\end{equation*}
Then, denoting by $N=N_1+\cdots+N_m$ the total population on the edges and by $V=v_1+\cdots+v_k$ the total population in the nodes, summing over $i\in I_v$,  and adding \eqref{de2b}, we obtain 
\begin{equation*}
\begin{split}
\frac{d}{dt}(N(t)+V(t)) &= -\sum\limits_{j\in I_e}c_ju_j(t,1)+\sum\limits_{i\in I_v}\mc m_{ii}v_i + 
 \sum\limits_{i\in I_v}\left(\sum\limits_{r\in E^+_i}\phi^+_{q,ir}c_r u_r(t,1)\right)\\
 &\phantom{x}-\sum\limits_{i\in I_v}\mc m_{ii}v_i + \sum\limits_{i\in I_v}\sum\limits_{r\in E_i^+}\phi_{p,ir}^+c_ru_r(t,1)\\
  &= -\sum\limits_{j\in I_e}c_ju_j(t,1) + 
 \sum\limits_{i\in I_v}\left(\sum\limits_{r\in E^+_i}\phi^+_{ir}c_ru_r(t,1)\right) =0,
 \end{split}
 \end{equation*}
 since $\phi^+_{ir}=1$ if and only if $r\in E^+_i,$ so the inner sum represents the sum of all terminal values of $c_ru_r$ entering $\nu_i$, and then we sum over all vertices so that we get all terminal values of $u_i$s as any arc has to end at some, and only one, vertex. 

 Thus, differentiable and nonnegative solutions are conservative. We know that $G_{1}(t))_{t\geq 0}$ is positive and, by the first part of the proof,
$$
\frac{d}{dt} (N(t)+V(t)) = \frac{d}{dt}\|G_{1}(t)({\mb u}, {\mb v})\|_{\mbb X_1} = 0, \quad 0\leq ({\mb u}, {\mb v})\in D(\mbb A),
$$
that is, for any $t\geq 0$, 
$$
\|G_{1}(t)({\mb u}, {\mb v})\|_{\mbb X_1} = 
\|({\mb u}, {\mb v})\|_{\mbb X_1}.
$$
For $(\mb u,\mb v)\in \mbb X_{1,+}$, we can take $D(\mbb A)_+\ni (\mb u_n,\mb v_n)\to (\mb u,\mb v), n\to\infty$, defined by, e.g., $(\mb u_n,\mb v_n) = n\int_0^{\frac{1}{n}}G_{\mbb A}(s)(\mb u,\mb v)ds,$ see \cite[Theorem 2.4 a) \& b)]{Pa} and, using the continuity of the norm,  
$$
\|G_{1}(t)({\mb u}, {\mb v})\|_{\mbb X_1} = \lim\limits_{n\to\infty}\|G_{1}(t)({\mb u}_n, {\mb v}_n)\|_{\mbb X_1} = \lim\limits_{n\to\infty}\|({\mb u}_n, {\mb v}_n)\|_{\mbb X_1} =
\|({\mb u}, {\mb v})\|_{\mbb X_1}, \quad (\mb u,\mb v)\in \mbb X_{1,+}.
$$
\end{proof}
\subsection{General semigroups}
We can partly extend Theorem \ref{mstth} to more general matrices. 
\begin{thm}\label{mstth1}
Let us decompose $\mc A = \operatorname{diag}\,{\mb a}+\mc A_0,$ $\mb a=(a_{11},\ldots,a_{kk})$ and 
denote $\mc A_+= \operatorname{diag}\,{\mb a}+|\mc A_0|$. Further, let $\bar \la\leq 0$ (where $\bar \la$ is defined for $|\mc G|$ and $\mb E_{\Re \la}$). Then $s(\mbb A)<0$, provided 
\begin{equation}
s(\mc A_+ + |\mc B||\mc K_0|| \mc F|)<0.
\label{impres3}
\end{equation}
\end{thm}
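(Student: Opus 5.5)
We reduce the general problem to the positive one through a domination argument on the spectral problem. By Proposition \ref{prop4}, which applies because $\bar\la\le 0$ makes $\mc I-\mc G\mb E_\la$ invertible for $\Re\la>\bar\la$, it suffices to show that for every $\la$ with $\Re\la\ge 0$ the matrix $\la\mc I-\mc A-\mc L_\la$ is invertible. Here $\mc L_\la=\mc B\mb E_\la\mc K_\la\mc F$.

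Fix such a $\la$ and write $\la=\alpha+i\beta$ with $\alpha\ge0$. First we establish entrywise bounds. Since $r(|\mc G|\mb E_{\alpha})<1$, the Neumann series gives
$$
|\mc K_\la|=\Big|\sum_{n\ge0}(\mc G\mb E_\la)^n\Big|\le \sum_{n\ge0}(|\mc G|\mb E_{\alpha})^n=\mc K^+_{\alpha},
$$
where $\mc K^+_{\alpha}=(\mc I-|\mc G|\mb E_{\alpha})^{-1}$. It follows that $|\mc L_\la|\le |\mc B|\mb E_\alpha\mc K^+_\alpha|\mc F|\le |\mc B|\mc K^+_0|\mc F|$, using $\mb E_\alpha\le \mc I$ and the monotonicity of $\alpha\mapsto\mc K^+_\alpha$ for $\alpha\ge0$. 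The last step requires $r(|\mc G|)<1$, which follows from $\bar\la\le 0$ by openness of $\Omega$ or a limiting argument. Care is needed if $\bar\la=0$ exactly, since then $0$ may lie on the boundary of $\Omega$. I would interpret the hypothesis so that $\mc K_0$ in \eqref{impres3} exists, taking it as $(\mc I-|\mc G|)^{-1}$ read through the Neumann series. Note also that $|\mc K_0|\le \mc K^+_0$, and this is the natural quantity in \eqref{impres3}. If \eqref{impres3} literally uses $|\mc K_0|$, I would first prove the bound with $\mc K^+_0$ and then check that the two coincide or that the argument still closes.

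Next comes the key step, a diagonal-dominance/Metzler comparison. Suppose $(\la\mc I-\mc A-\mc L_\la)\mb x=0$ with $\mb x\ne0$. Taking moduli componentwise, we get $|\la-a_{ii}||x_i|\le \sum_j(|\mc A_0|+|\mc L_\la|)_{ij}|x_j|$. Since $\Re\la\ge0$, we have $|\la-a_{ii}|\ge \Re\la-a_{ii}\ge -a_{ii}$. Hence $0\le(\operatorname{diag}\mb a+|\mc A_0|+|\mc B|\mc K_0^+|\mc F|)|\mb x|=:\mc N|\mb x|$ with $|\mb x|\ge0$ nonzero. Since $\mc N$ is Metzler with $s(\mc N)<0$ by \eqref{impres3}, $-\mc N$ is positively invertible (\cite[Theorem 8.1]{banpop}). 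Therefore $-\mc N|\mb x|\le0$ implies $|\mb x|=(-\mc N)^{-1}(-\mc N|\mb x|)\le0$, so $\mb x=0$, a contradiction.

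Thus no $\la$ with $\Re\la\ge0$ lies in $\sigma(\mbb A)$. Since the spectrum is discrete and bounded from the right, with zeros of an entire determinant in a half-plane accumulating only at infinity, we get $s(\mbb A)<0$. To obtain a strict inequality rigorously, I would apply the same argument to $\la$ with $\Re\la\ge-\e$ for small $\e>0$. This works because $s(\mc N)<0$ is preserved under the small perturbations $\mc N+\e\mc I$ and $\mc K^+_{-\e}$, the latter being close to $\mc K^+_0$ by continuity of the Neumann series when $r(|\mc G|)<1$.

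The main obstacle is the boundary case $\bar\la=0$ and matching $|\mc K_0|$ versus $\mc K^+_0$ in \eqref{impres3}. The comparison step itself is routine.
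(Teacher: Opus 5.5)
Your argument is correct, but it takes a genuinely different route from the paper's.

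\textbf{The paper's route.} The paper works at the level of the semigroup. It proves $|R(\la,\mc A)|\le R(\Re\la,\mc A_+)$ by an ODE comparison. It then propagates moduli through the Neumann-series representation \eqref{res11} to get $|R(\la,\mbb A)|\le R(\Re\la,\mbb A_+)$. The Post--Widder formula turns this into $|G_{\mbb A}(t)\mb x|\le G_{\mbb A_+}(t)|\mb x|$, and finally Theorem \ref{mstth} is invoked for the positive problem $\mbb A_+$.

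\textbf{Your route.} You never leave the characteristic matrix. Three ingredients rule out every zero of $\det(\la\mc I-\mc A-\mc L_\la)$ in a half-plane $\Re\la\ge-\e$: the bound $|\mc L_\la|\le|\mc B|\mb E_{\Re\la}(\mc I-|\mc G|\mb E_{\Re\la})^{-1}|\mc F|$, the estimate $|\la-a_{ii}|\ge\Re\la-a_{ii}$, and the positive invertibility of $-\mc N$. The $\e$-shift is genuinely needed. With $\mc G\neq0$ the problem is of neutral type, so eigenvalue chains may approach a vertical line, and discreteness of $\sigma(\mbb A)$ alone would not give a strict inequality. In the shifted matrix you should also keep the factor $\mb E_{-\e}$, which is again a small perturbation of $\mc I$.

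\textbf{What each approach buys.} Your route is economical. It needs no comparison theorem, no Post--Widder formula, and none of the Sikolya-type analysis of $\la\mapsto s(\mc A+\mc L_\la)$ behind Theorem \ref{mstth}. What it does not give is the growth bound. The paper's domination yields $\omega(\mbb A)\le\omega(\mbb A_+)=s(\mbb A_+)<0$, i.e.\ uniform exponential stability. That is what is actually used later, since the linearised stability theorem requires $\omega^*<0$. This does not follow automatically from $s(\mbb A)<0$, because $(G_{\mbb A}(t))_{t\ge0}$ is not positive.

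\textbf{On your concern about $|\mc K_0|$.} Your reading, $\mc K_0^+=(\mc I-|\mc G|)^{-1}$ with $r(|\mc G|)<1$, is exactly what the paper's proof uses, because Theorem \ref{mstth} is applied to $\mbb A_+$. That theorem requires $\bar\la<0$. This has to be assumed; it cannot be deduced from $\bar\la\le0$ via openness of $\Omega$.

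The two readings do not coincide, and the literal one, $|(\mc I-\mc G)^{-1}|$, would make the statement false. Take $k=m=1$, $\mc A=-1$, $\mc B\mc F=-1.5$, $\mc G=-0.9$. Then $-1+1.5/1.9<0$ and $\bar\la<0$. Yet for a suitable traverse time $\tau$, the equation $(\la+1)(1+0.9e^{-\la\tau})+1.5e^{-\la\tau}=0$ has a purely imaginary root.
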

\begin{proof}
First, we provide an estimate for $R(\la,\mc A).$ Consider the system
\begin{equation*}
\mb y'=\mc A\mb y = (\operatorname{diag}\,{\mb a})\mb y+\mc A_0\mb y.
\end{equation*}
For each $i\in I_v,$ the right-hand side derivative of $|y_i(t)|$ is given by 
$$
|y_i(t)|_+' =\left\{\begin{array}{lcl}
y_i'(t)&\textit{if}&y_i(t)>0,\\
-y_i'(t)&\textit{if}&y_i(t)<0,\\
|y_i'(t)|&\textit{if}&y_i(t)=0,
\end{array}
\right.
$$
hence 
$$
|\mb y(t)|'_+ \leq (\operatorname{diag}\,{\mb a})|\mb y(t)| + |\mc A_0||\mb y(t)|.
$$
Since the right-hand side of the above is a Metzler matrix, $|\mb y(t)|\leq \mb z(t)$, where $\mb z$ satisfies 
$$
\mb z'= (\operatorname{diag}\,{\mb a})\mb z + |\mc A_0|\mb z, \quad \mb z(0)=|\mb y(0)|,
$$
see, e.g., \cite[Theorem 10 \& comment on p. 30]{Copp}; that is, $|G_{\mc A}(t)|\leq G_{\mc A_+}(t)$. By the integral representation of the resolvent, we get
\begin{equation*}
|R(\la,\mc A)|\leq R(\Re\la, \mc A_+),\quad \Re\la>s(\mc A_+).
\end{equation*}

To conclude  the proof, first, we observe that 
$$
|\mb{E}_{\lambda}(s,t)|=  \operatorname{diag}\left(|\mb e_i(s,t)|\right)_{1\leq i\leq m} = \operatorname{diag}\;\left(\left|e^{-\frac{\lambda}{c_i}(t-s)}\right|\right)_{1\leq i\leq m}= \mb{E}_{\Re\lambda}(s,t).
$$
Remembering that $\mc H_\la = \mc FR(\lambda,\mc A)\mc B +\mc G$, $$
|\mc H_\la|\leq |\mc F|R(\Re\la, \mc A_+) |\mc B|+|\mc G|=:\ov{\mc H}_{\Re\la}.
$$
Proposition \ref{prop2} and \eqref{impres3}  imply $r(\ov{\mc H}_0)<1$, hence  
$$
\left|\mc M_\la\right|:= \left|\sum\limits_{r=0}^\infty \left({\mc H}_{\lambda}\mb E_{\lambda}\right)^r \right|\leq \sum\limits_{r=0}^\infty \left( \ov{\mc H}_{\Re\lambda}\mb E_{\Re\lambda}\right)^r \leq \sum\limits_{r=0}^\infty \ov{\mc H}^r_0,
$$
where the last inequality follows from the fact that $R(\Re\la, \mc A_+) $ is a decreasing function of $\Re\la.$. Further, 
$$
\left |R_0(\la)\mb f\right| =\left|\int\limits_0^1 \mb{E}_{\lambda}(s,1)\mathcal{C}^{-1}(s)\mb{f}(s)ds\right|\leq \int\limits_0^1 \mb{E}_{\Re\lambda}(s,1)\mathcal{C}^{-1}(s)|\mb{f}(s)|ds = R_0(\Re\la)|\mb f|.
$$
Hence, for $(\mb u_\la,\mb v_\la)=R(\la, \mbb A)(\mb f,\mb g)$, we have, by \eqref{res11},
\begin{align*}
|\mb u_\la(x)|&\leq \mb E_{\Re\lambda}(0,x)|\mc M_\la| \left(|\mc F|R(\Re\lambda,\mc A_+)|\mb g| +\bar{\mc H}_{\Re\lambda}
  [R_0(\Re\la)|\mb{f}|](1)\right)+[R_0(\Re\la)|\mb{f}|](x)\\
  |\mb v_\la|&\leq
  R(\Re\la,\mc A_+)\left(|\mb g|+|\mc B|\mb E_{\Re\lambda}|\mc M_\la| \left(|\mc F|R(\Re\lambda,\mc A_+)|\mb g| \right.\right.\\
  &\left.\left.\phantom{xx}+ \bar{\mc H}_{\Re\lambda}
  [R_0(\Re\la)|\mb{f}|](1)\right)+|\mc B|[R_0(\Re\la)|\mb f|](1)\right),
 \end{align*}
that is, 
\begin{equation*}
|R(\la,\mbb A)|\leq R(\la, \mbb A_+),
\end{equation*}
where $\mbb A_+$ is the generator of the problem \eqref{mod1} with $\mc A,\mc B,\mc F,\mc G$ replaced by $\mc A_+,|\mc B|,|\mc F|$, and $|\mc G|$, respectively. Using, the Post-Widder Inversion Formula, e.g., \cite[Corollary IV 2.5]{ENshort}, we obtain
$$
|G_{\mbb A}(t)(\mb {\mr u},\mb {\mr v})|\leq G_{\mbb A_+}(t)(|\mb {\mr u}|,|\mb {\mr v}|), \quad (\mb {\mr u},\mb {\mr v})\in \mbb X_1.
$$
Since \eqref{impres3} ensures the stability of $\sem{\mbb A_+}$ by Theorem \ref{mstth}, the stability of $\sem{\mbb A}$ follows.
\end{proof}
\section{Nonlinear dynamics at the nodes}\label{secnon}
Next, we return to the full nonlinear model \eqref{mod1} with $\mc Q\neq 0.$ We focus on $\mbb X_1$ theory.
\subsection{Well-posedness of \eqref{mod1}}
 As before, we consider \eqref{mod1} as a matrix problem 
\begin{equation*}
\begin{split}
\partial_t\left(\begin{array}{c}\mb u\\\mb v\end{array}\right) &= \left(\begin{array}{cc}-\mc C\mc{diag}\,\mb{\p}_x&\mb 0\\\mc B\gamma_1&\mc A\end{array}\right)\left(\begin{array}{c}\mb u\\\mb v\end{array}\right) +  \left(\begin{array}{c} \mb 0\\ \mc Q(\mb v)\end{array}\right)=: \mbb A\left(\begin{array}{c}\mb u\\\mb v\end{array}\right) +  \mbb Q (\mb v)
\end{split}
\end{equation*}
in $\mbb X_1,$ with the linear operator $\mbb A$ defined on $D(\mbb A),$ given by \eqref{mod2b}, and $\mc Q:\Omega\to \mbb R^k$ a Lipschitz continuous function on $\Omega\subset \mbb R^k.$

Let us consider the mild formulation of \eqref{mod2}. We have 
\begin{equation}\label{mild1}
    \left(\begin{array}{c}\mb u(t)\\\mb v(t)\end{array}\right) = G_{1}(t)\left(\begin{array}{c}\mb{\mr u}\\\mb {\mr v}\end{array}\right) + \int_0^tG_{1}(t-s) \left(\begin{array}{c}\mb 0\\\mc Q(\mb v(s))\end{array}\right)ds.
\end{equation}
If we write 
$$
G_{1}(t) = \left(\begin{array}{cc}G_{11}(t)&G_{12}(t)\\G_{21}(t)&G_{22}(t)\end{array}\right),
$$
then, in particular,
$$
\left\|G_{1}(t)\left(\begin{array}{c}0\\\mb v\end{array}\right)\right\|_{\mbb X_1} = \|G_{12}(t)\mb v\|_1+\|G_{22}(t)\mb v\|_{1},
$$
and
$$
\left\|G_{1}(t)\left(\begin{array}{c}0\\\mb v\end{array}\right)-G_{1}(s)\left(\begin{array}{c}0\\\mb v\end{array}\right)\right\|_{\mbb X_1} = \|G_{12}(t)\mb v -G_{12}(s)\mb v\|_{1}+\|G_{22}(t)\mb v-G_{22}(s)\mb v\|_{1},
$$
so we see that both $G_{12}(t)$ and $G_{22}(t)$ are nonnegative (as we have coordinate-wise order) and bounded operators from $l^1(\mbb C^k)$ to, respectively,  $L^1((0,1), \mbb C^m)$ and $l^1(\mbb C^k),$ strongly continuous ($G_{22}$ even uniformly since it is a matrix), and satisfy the same exponential growth estimate as $G_{1}.$ Moreover, $G_{jj}(t), j=1,2,$ strongly converge to the identities in the respective spaces, and $G_{ij}(t), i\neq j,$ strongly converge to zero, as $t\to 0^+$.
Then \eqref{mild1} can be written as 
\begin{subequations}\label{mild2}
\begin{equation}
\mb u(t) = G_{11}(t)\mb{\mr u} + G_{12}(t)\mb{\mr v} + \int\limits_0^t G_{12}(t-s) \mc Q(\mb v(s))ds,\label{mild2a}
\end{equation}
\begin{equation}
\mb v(t) = G_{21}(t)\mb{\mr u} + G_{22}(t)\mb{\mr v} + \int\limits_0^t G_{22}(t-s) \mc Q(\mb v(s))ds,\label{mild2b}
\end{equation}
\end{subequations}
thus we have a hierarchical system, and it suffices to solve \eqref{mild2b} (which is a finite-dimensional system)  for $\mb v$ and substitute the solution into \eqref{mild2a}. 

Often, we are concerned with nonnegative solutions to \eqref{mod1} with nonnegative $\mc B,\mc F$ and $\mc G$, and Metzler $\mc A$, so that $G_{1}(t)\geq 0$, but with $\mc Q$ that is not necessarily nonnegative for $\mb v\geq 0.$ If, however, for any ball $B(0,R)$  there is $\alpha\geq 0$ such that
\begin{equation}
\mc Q(\mb v)+\alpha \mb v\geq 0, \quad \mb v\in B_+(0,R):=B(0,R)\cap \mbb R^k_+,
\label{poscon}
\end{equation}
then, we can consider an equivalent problem
\begin{equation}\label{mod3}
\begin{split}
\partial_t\left(\begin{array}{c}\mb u\\\mb v\end{array}\right) &= \left(\begin{array}{cc}-\mc C\mc{diag}\,\mb{\p}_x&\mb 0\\\mc B\gamma_1&\mc A-\alpha\mc I\end{array}\right)\left(\begin{array}{c}\mb u\\\mb v\end{array}\right) + \mbb Q_\alpha (\mb v)=: \mbb A_\alpha\left(\begin{array}{c}\mb u\\\mb v\end{array}\right) +  \mbb Q_\alpha (\mb v),
\end{split}
\end{equation}
where
$$
\mbb Q_{\alpha} (\mb v) = \left(\begin{array}{c} \mb 0\\\mc Q(\mb v)+\alpha \mb v\end{array}\right).
$$
Now, the semigroup generated by $\mbb{A}_\alpha$, defined on $D(\mbb A_\alpha)=D(\mbb A)$, is nonnegative by Proposition \ref{prop5} (as $s(\mc A-\alpha\mc I)<s(\mc A)),$ and 
$
G_{\alpha}(t)\leq G_{1}(t),
$
by \eqref{res1b} and an obvious estimate $R(\la,\mc A-\alpha\mc I)\leq R(\la, \mc A), \la >s(\mc A).$ 
Then, with $G_{ij}(t)$ replaced by $G_{\alpha,ij}(t)$, $i,j=1,2,$ \eqref{mild2b} takes the form  
\begin{equation}
\mb v(t) = G_{\alpha,21}(t)\mb{\mr u} + G_{\alpha,22}(t)\mb{\mr v} + \int\limits_0^t G_{\alpha,22}(t-s)(\mc Q(\mb v(s))+\alpha\mb v)ds.\label{mild3b}
\end{equation}
Thus, the Picard iterates of \eqref{mild3b} in $B_+(0,R)$ are nonnegative and hence $\mb u$ is nonnegative.
Thus, we can formulate
\begin{thm}
For any $(\mb {\mr u},\mb{\mr  v})\in \mbb X_{1,+}$, there exists a unique mild solution $(\mb u(t),\mb v(t))$ to \eqref{mod2} defined on its maximal interval of existence $[0,t_{\max}(\mb {\mr u},\mb{\mr  v})),$ which is nonnegative if \eqref{poscon} is satisfied. 
The solution is classical if $\mc Q$ is differentiable and $(\mb {\mr u},\mb{\mr  v})\in D(\mbb A).$ If $t_{\max}(\mb {\mr u},\mb{\mr  v})<\infty,$ then $\lim\limits_{t\to t_{\max}(\mb {\mr u},\mb{\mr  v})}\|(\mb u(t),\mb v(t))\|_{\mbb X_1} =\infty.$
\end{thm}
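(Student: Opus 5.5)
The approach is to reduce everything to the finite-dimensional integral equation \eqref{mild2b} (or \eqref{mild3b} in the positive case) for $\mb v$, solve it by a Banach fixed point argument, and then define $\mb u$ explicitly through \eqref{mild2a}. Since $G_{21}(t)\mb{\mr u}$ is a continuous $\mbb C^k$-valued function and $G_{22}(t)$ is a continuous matrix-valued function bounded by $Me^{\omega t}$, the map
$$
(\Gamma \mb v)(t) = G_{21}(t)\mb{\mr u} + G_{22}(t)\mb{\mr v} + \int_0^t G_{22}(t-s)\mc Q(\mb v(s))\,ds
$$
sends $C([0,\delta],\overline{B(\mb w,r)})$ into itself for small $\delta$, where $\mb w$ is chosen near $\mb{\mr v}$ so that the closed ball lies in $\Omega$ (assuming $\mb{\mr v}\in\Omega$, or $\Omega=\mbb R^k$ / a neighbourhood of $\mbb R^k_+$ in the positive case). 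It is a contraction because $\mc Q$ is Lipschitz with constant $L$, giving $\|\Gamma\mb v_1-\Gamma\mb v_2\|_\infty\le \delta M e^{\omega\delta}L\|\mb v_1-\mb v_2\|_\infty$. A local unique continuous $\mb v$ follows. Then \eqref{mild2a} defines $\mb u\in C([0,\delta],L^1)$ by strong continuity of $G_{11},G_{12}$, and $(\mb u,\mb v)$ satisfies \eqref{mild1}. Uniqueness of $(\mb u,\mb v)$ follows since any mild solution has $\mb v$ solving \eqref{mild2b}.

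The next step is to extend the solution by the standard continuation argument: glue solutions using the semigroup property, in the form of the mild formulation restarted at $t_0$ with data $(\mb u(t_0),\mb v(t_0))$, and define $t_{\max}$ as the supremum of existence times. For the blow-up alternative, the existence time $\delta$ depends only on a bound for $\|(\mb{\mr u},\mb{\mr v})\|_{\mbb X_1}$ and on the Lipschitz constant on the relevant ball. So if the norm stayed bounded along a sequence $t_n\to t_{\max}<\infty$, restarting at $t_n$ close to $t_{\max}$ would extend the solution, which is a contradiction. If $\mc Q$ is only locally Lipschitz, the same argument runs with $L=L_R$. I would apply it with $\liminf$ first and then upgrade to $\lim$ by the usual argument, which uses the uniformity of $\delta$ over bounded data.

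For nonnegativity under \eqref{poscon}, I would work with \eqref{mod3}. By Proposition \ref{prop5} $G_\alpha(t)\ge0$, so all blocks $G_{\alpha,ij}(t)\ge0$. I would run the Picard iteration for \eqref{mild3b} in the closed set of nonnegative continuous functions with values in $B_+(0,R)$. Each iterate is nonnegative because $\mc Q(\mb v)+\alpha\mb v\ge0$ there, so the limit is nonnegative, and then $\mb u\ge0$ by \eqref{mild2a} with $G_{\alpha,1j}$. Both mild formulations (with $\mbb A$ and $\mbb A_\alpha$) define the same solution, since $\mbb A+\mbb Q=\mbb A_\alpha+\mbb Q_\alpha$ and a bounded linear shift is compatible with mild solutions (standard, e.g. via Pazy's perturbation results). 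Uniqueness then identifies the two.

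For classical solutions when $\mc Q$ is $C^1$ and $(\mb{\mr u},\mb{\mr v})\in D(\mbb A)$, I would invoke the standard semilinear regularity theorem (Pazy, Thm. 6.1.5), since $\mbb Q$ is continuously Fréchet differentiable on $\mbb X_1$, depending only on the finite-dimensional component. The main obstacle is the bookkeeping of the domain $\Omega$: ensuring the iterates stay in $\Omega$ (respectively $B_+(0,R)$) and choosing $R$ larger than the data norm so that $\alpha=\alpha(R)$ is fixed on each local step. I would handle this by taking $R=2\|\mb{\mr v}\|+2\|G_{\alpha,21}\|\,\|\mb{\mr u}\|+1$ and shrinking $\delta$.
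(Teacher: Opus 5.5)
Your proposal is correct and is essentially the paper's argument. The paper also reduces \eqref{mild1} to the hierarchical system \eqref{mild2} and solves the finite-dimensional equation \eqref{mild2b}; for positivity it uses \eqref{mild3b} with the nonnegative semigroup generated by $\mbb A_\alpha$, whose Picard iterates stay in $B_+(0,R)$, and it defers continuation, blow-up and regularity to \cite[Theorems 6.1.2 \& 6.1.5]{Pa}. The only thing to tidy is your choice of $R$: it cannot be defined through $G_{\alpha,21}$ because $\alpha=\alpha(R)$, so take $R$ from $G_1$ alone (e.g.\ via $0\le G_\alpha(t)\le G_1(t)$) and use this large ball rather than a small ball around $\mb{\mr v}$, which is what makes $\delta$ depend only on the data norm in your blow-up step.
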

The proof of this result is standard, e.g., \cite[Theorems 6.1.2\,\& 6.1.5]{Pa}.

\subsection{Stability analysis}

We assume that $\mc Q$ is differentiable. To get the stationary points of \eqref{mod3}, we first observe that \eqref{mod1a} implies that $\mb u(x)=\mb u^*$ is a constant. Then, $\gamma_0\mb u^* = \gamma_1\mb u^*=\mb u^*$, which yields
\begin{equation}
\begin{split}
\mc A\mb v^* + \mc Q(\mb v^*) +\mc B\mb u^*&= 0,\\
\mc F\mb v^* + \mc G\mb u^*&=\mb u^*.
\end{split}
\label{stp1}
\end{equation}
To get the linearisation around $(\mb u^*,\mb v^*)$, we define $\mb z=\mb u-\mb u^*, \mb y = \mb v-\mb v^*$. Then, $\mb z\in (W^1_1(0,1),\mbb R^m)$ if $\mb u$ does, and
$$
\gamma_0 \mb z = \gamma_0\mb u - \mb u^* = \mc F\mb v+\mc G\gamma_1\mb u-\mb u^* = \mc F\mb y + \mc F\mb v^* + \mc G\gamma_1\mb z + \mc G\mb u^* -\mb u^*= \mc F\mb y +\mc G\gamma_1\mb z
$$
on account of the second formula in \eqref{stp1}. Hence, $(\mb u,\mb v)\in D(\mbb A)$ implies $(\mb z,\mb y)\in D(\mbb A)$ and the linearised systems is given by 
\begin{equation}
\begin{split}
    \p_t \mb z &= -\p_x \mb z,\\
    \mb y'&= \mc A\mb y +\mc J_{\mc Q}(\mb u^*,\mb v^*)\mb y +\mc B\gamma_1\mb z,\\
    \gamma_0\mb z&= \mc F\mb y + \mc G\gamma_1\mb z,
\end{split}
\label{lin1}
\end{equation}
where $\mc J_{\mc Q}(\mb u^*,\mb v^*)$ is the Jacobian of $\mc Q$ at $(\mb u^*,\mb v^*).$ Denote by $\sem {*}$ the $C_0$-semigroup generated by \eqref{lin1} and let $\omega^*$ be its type. 

Then, e.g., \cite[Proposition 4.17]{Webb} implies
\begin{thm}
If $\mc Q$ is differentiable and $\omega^*<0$,  then the stationary solution $(\mb u^*,\mb v^*)$ is locally asymptotically stable in the sense that there are $\e>0,\gamma >0, M\geq 1$ such that for any $(\mb {\mr u},\mb {\mr v})\in \mbb X_1$ satisfying  $\|(\mb {\mr u},\mb {\mr v})-(\mb u^*,\mb v^*)\|_{\mbb X_1}<\e$, the mild solution $(\mb u(t),\mb v(t))$ to \eqref{mod2} is globally defined and $\|(\mb { u}(t),\mb { v}(t))-(\mb u^*,\mb v^*)\|_{\mbb X_1}\leq Me^{-\gamma t}\|(\mb {\mr u},\mb {\mr v})-(\mb u^*,\mb v^*)\|_{\mbb X_1}$ for all $t\geq 0.$
\end{thm}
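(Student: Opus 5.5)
The plan is to reduce the statement to the standard principle of linearised stability for semilinear evolution equations, in the form of \cite[Proposition 4.17]{Webb}. We therefore need to recast the problem in the abstract form $w' = \mathbb A w + \mathbb F(w)$ around the equilibrium and check the hypotheses of that result.

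\textbf{Step 1 (shift to the equilibrium).} Set $(\mb z,\mb y)=(\mb u-\mb u^*,\mb v-\mb v^*)$. By \eqref{stp1} and the computation preceding \eqref{lin1}, $(\mb u,\mb v)$ is a mild solution of \eqref{mod2} if and only if $(\mb z,\mb y)$ is a mild solution of
\[
\partial_t(\mb z,\mb y)=\mathbb A^*(\mb z,\mb y)+\bigl(\mb 0,\ \mc R(\mb y)\bigr).
\]
Here $\mathbb A^*$ is the operator of \eqref{lin1}, namely $\mathbb A$ with $\mc A$ replaced by $\mc A+\mc J_{\mc Q}$. It has the same domain $D(\mathbb A)$, because the boundary condition is unchanged. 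It generates $\sem{*}$ by the $L^1$ generation theorem of Section \ref{exist}, which holds for arbitrary matrices. The remainder is
\[
\mc R(\mb y)=\mc Q(\mb v^*+\mb y)-\mc Q(\mb v^*)-\mc J_{\mc Q}\mb y .
\]
The constant term cancels because the first equation in \eqref{stp1} holds.

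\textbf{Step 2 (properties of the nonlinearity).} The map $\mathbb F(\mb z,\mb y)=(\mb 0,\mc R(\mb y))$ is defined on a neighbourhood of $0$ in $\mbb X_1$ and is locally Lipschitz. Since $\mc Q$ is differentiable at $\mb v^*$, it satisfies $\|\mathbb F(w)\|_{\mbb X_1}=o(\|w\|_{\mbb X_1})$ as $w\to 0$, because $\|\mb y\|\le\|(\mb z,\mb y)\|_{\mbb X_1}$. If Webb's proposition requires a Fréchet derivative of $\mathbb F$ at $0$ equal to zero, this is exactly that statement. Its local Lipschitz constant near $0$ tends to $0$ if $\mc Q$ is $C^1$; if only differentiability is available, the $o(\|w\|)$ bound combined with the Lipschitz continuity of $\mc Q$ should suffice for the Gronwall argument below.

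\textbf{Step 3 (the stability estimate).} Since $\omega^*<0$, fix $\gamma'\in(0,-\omega^*)$ and $M\ge1$ with $\|G_*(t)\|\le Me^{-\gamma' t}$. Choose $\delta>0$ so that $\|\mathbb F(w)\|\le \eta\|w\|$ for $\|w\|<\delta$, where $\eta M<\gamma'$. Writing the variation of constants formula for $w(t)$ and applying Gronwall's inequality to $e^{\gamma' t}\|w(t)\|$ gives
\[
\|w(t)\|\le Me^{-(\gamma'-M\eta)t}\|w(0)\|
\]
as long as $\|w\|<\delta$. Taking $\e=\delta/M$ keeps the solution inside the ball. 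The blow-up alternative of the preceding well-posedness theorem then gives $t_{\max}=\infty$, and the estimate holds with $\gamma=\gamma'-M\eta$.

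The only delicate point I expect is Step 2: ensuring that $\mathbb F$ is well defined and has the required smallness on a full neighbourhood in $\mbb X_1$ when $\mc Q$ is only defined on $\Omega$. This requires $\mb v^*$ to lie in the interior of $\Omega$, which should be stated or taken as implicit. Everything else is routine.
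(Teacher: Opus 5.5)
Your proposal is correct and follows the paper's route. The paper proves the statement simply by citing the principle of linearised stability in \cite[Proposition 4.17]{Webb}, and your Steps 1--3 (shifting to the equilibrium, absorbing $\mc J_{\mc Q}$ into the generator as a bounded perturbation so that the remainder is $o(\|w\|)$, then the Gronwall and continuation argument with the blow-up alternative) are the standard content of that result written out. Your caveat that $\mb v^*$ must lie in the interior of $\Omega$, with real-valued data, is a fair implicit assumption of the statement.
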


\subsubsection{Example: an $SIS$ model on a network}
For illustrative purposes, we consider a multi-patch $SIS$ disease with no vertical disease transmission. We assume that the infectives do not travel so that the disease develops only in the cities represented by the nodes of a digraph. Then, each population $v_l$ at site $ \nu_l$, $l\in I_v,$ is divided into susceptibles $s_l$ and infectives $i_l$ and the population without migrations evolves as   \begin{equation*}
\begin{split}
s_l'&= \Lambda_l -\mu^s_l s_l -  \beta_ls_li_l +\omega_l i_l, \\
i_l'&=  \beta_ls_li_l -\mu^i_l i_l-\omega_l i_l,
\end{split}
\end{equation*}
$l\in I_v$. Here,
\begin{itemize}
    \item the force of infection is given by the mass-action law $\beta_ls_li_l$,
    \item the demography is affine with the total birth rate $\Lambda_l$,
    \item $\mu^s_l$ and $\mu^i_l$ are the disease-state specific death rates per capita, 
    \item $\omega_l$ is the recovery rate (without immunity).
\end{itemize}
  
  Next, let the sites $\nu_l$ be the vertices of a digraph $\mbb G$ with directed edges (arcs) $\mb e_j, j\in I_e$, along which susceptibles $u_j$ move with some speed $c_j$. Then, at time $t$, at $ \nu_l,$ we will have the resident susceptibles $s_l(t)$ and the migrants arriving at the rates $b_{lj} = \phi^+_{lj}, j\in E^+_l,$ see Section \ref{secmd}. The latter's role in the infection is negligible, as can be seen from the following simple reasoning. Discarding the demography and recovery, the change in the number of susceptibles over a short period of time $\Delta t$ equals
  $$
  s_l(t+\Delta t)-s_l(t) = -\beta_l(s_l(t+\delta )+s^m_l(t+\delta))i_l(t+\delta)\Delta t, \quad 0\leq \delta \leq \Delta t,
  $$
  where $s^m_l(t+\delta)$ are new susceptibles that arrived between $t$ and $t+\delta$. The latter will be smaller than the number of new arrivals over the period $[t,t+\Delta t].$  Thus, if the new arrivals occur at some finite rate,  then $s^m_l(t+\delta) = O(\Delta t)$ and hence the contribution of the newly arriving susceptibles to the infection rate will be of $(\Delta t)^2$ order and thus will vanish when we pass to the derivatives. Thus, as in, e.g., \cite{arino2009}, 
    \begin{equation*}
\begin{split}
s_l'&= \Lambda_l -\mu^s_l s_l -  \beta_l s_l i_l +\omega_l i_l -\mc m_{ll} s_l + \sum_{j\in I_e} b_{lj}\gamma_1u_j, \\
i_l'&=  \beta_l s_l i_l -\mu^i_l i_l-\omega_l i_l.
\end{split}
\end{equation*} 
To combine this model with \eqref{de1}, for $\mb s = (s_1,\ldots,s_k), \mb i = (i_1,\ldots,i_k)$, we define
$$
\mb v = \left(\begin{array}{c}\mb s\\\mb i\end{array}\right).
$$
Then, \eqref{de1b} and \eqref{de1c} are modified to 
\begin{equation*}
\begin{split}
\mb v_t &= -\mc M_d\mb v+ \mc Q(\mb v) + \mc B \mb u(t,1),\\
\mc C\mb u(t,0)& = \mc F \mb v(t) + \mc G\mb u(t,1),
\end{split}
\end{equation*}
where 
$$
\mc A = \left(\begin{array}{cc}-\mc M_d&\mb 0\\
\mb 0&\mb 0\end{array}\right), \quad 
\mc Q(\mb v) = \left(\begin{array}{c} \Lambda_1 -\mu^s_1 s_1 -  \beta_1 s_1 i_1 +\omega_1 i_1\\
\vdots\\
\Lambda_k -\mu_k^s s_k -  \beta_k s_k i_k +\omega_k i_k\\
\beta_1 s_1 i_1 -\mu^i_1 i_1-\omega_1 i_1\\
\vdots\\
\beta_k s_k i_k -\mu^i_k i_k-\omega_k i_k
\end{array}\right),
$$
and
$$
\mc B = \left(\begin{array}{c} \Phi_p^+ \mc C \\
\mb 0\end{array} \right), \quad 
\mc F = \left(\begin{array}{cc}  \Phi^-_{w_1}\mc M_d  &\mb 0\end{array} \right), \quad \mc G = \Phi^-_{w_2}\Phi^+_q \mc C.
$$
We assume that there are no sinks in $\mbb G,$ that is, $-\mc M_d$ is a stable Metzler matrix. 

The stationary states $(\mb u^*,\mb s^*,\mb i^*)$ are solutions to the relevant version of \eqref{stp1}, 
\begin{equation*}
\begin{split}
-\mc M_d\mb s^* + \mb \Lambda -(\operatorname{diag}\,\mb \mu^s )\mb s^* -(\operatorname{diag}\,(\mb\beta\odot \mb i^*))\mb s^* +  (\operatorname{diag}\,\mb \omega)\mb i^*+\Phi_p^+\mc C \mb u^*&=\mb 0,\\
(\operatorname{diag}\,(\mb\beta\odot \mb i^*))\mb s^* - (\operatorname{diag}\,\mb \mu^i )\mb i^*  -(\operatorname{diag}\,\mb \omega)\mb i^*&=\mb 0,\\
\mc C\mb u^*-\Phi^-_{w_1}\mc M_d\mb s^* - \Phi^-_{w_2}\Phi^+_q \mc C\mb u^*&=\mb 0,
\end{split}
\end{equation*}
where $\odot$ denotes the Hadamard product of vectors. If we focus on a disease-free equilibrium (DFE), $(\mb u^*,\mb s^*,\mb 0)$, then we obtain the linear system
\begin{equation*}
\begin{split}
-\mc M_d\mb s^* + \mb \Lambda -(\operatorname{diag}\,\mb \mu^s )\mb s^* +\Phi_p^+\mc C \mb u^*&=\mb 0,\\
\mc C\mb u^*-\Phi^-_{w_1}\mc M_d\mb s^* - \Phi^-_{w_2}\Phi^+_q \mc C\mb u^*&=\mb 0.
\end{split}
\end{equation*}
Assuming that $r(\Phi^-_{w_2}\Phi^+_q)<1,$ see Lemma  \ref{lem2}, we arrive at
\begin{equation}
\mc T\mb s^*:= -(\mc M_d+\operatorname{diag}\,\mb \mu^s )\mb s^* +\Phi_p^+(\mc I - \Phi^-_{w_2}\Phi^+_q)^{-1}\Phi_{w_1}^-\mc M_d\mb s^*=-\mb \Lambda. 
\label{Teq}
\end{equation}
Using again \cite[Theorems 8.1 \& 8.5]{banpop}, $s(\mc T)<0$ (ensuring the positive invertibility of $-\mc T$), if and only if $r(\Phi_p^+(\mc I - \Phi^-_{w_2}\Phi^+_q)^{-1}\Phi_{w_1}^-\mc M_d(\mc M_d+\operatorname{diag}\,\mb \mu^s )^{-1})<1.$ However,
$$
\Phi_p^+(\mc I - \Phi^-_{w_2}\Phi^+_q)^{-1}\Phi_{w_1}^-\mc M_d(\mc M_d+\operatorname{diag}\,\mb \mu^s )^{-1}\leq \max\limits_{i\in I_v}\frac{\mc m_{ii}}{\mc m_{ii} + \mu^s_i}\Phi_p^+(\mc I - \Phi^-_{w_2}\Phi^+_q)^{-1}\Phi_{w_1}^-.
$$
Hence, using Theorem \ref{thm5} and Remark \ref{rem7}, there is a unique positive solution to \eqref{Teq}, $\mb s^*=-\mc T^{-1}\mb \Lambda$ if $\min\limits_{i\in I_v} \mu^s_i>0$ and $p_{ir}+q_{rj}\leq 1$ for any $i,j\in E^+_r, r\in I_v.$ This occurs, in particular, when at the endpoint of each arc, there is a perfect split of the population into the transient part and the part entering the site, and there is nonzero mortality at each site. 

For the local asymptotic stability of $\mb v^* = (\mb s^*,\mb 0),$  we evaluate the Jacobian of $\mc Q$
 at the DFE,  
\begin{align*}
\mc J_\mc Q(\mb v^*)& = \left(\begin{array}{cc}-\operatorname{diag}\;\mb \mu^s&\operatorname{diag} (-\mb\beta\odot\mb s^*+\mb\omega)\\
\mb 0& \operatorname{diag}( \mb\beta\odot\mb s^*-\mb\mu^i-\mb \omega)\end{array}\right). 
\end{align*}
We see that, in general, $\mc J_\mc Q(\mb v^*)$ is not a Metzler matrix. However, using Theorem \ref{mstth1}, we consider 
\begin{align*}
\mc J^+_\mc Q(\mb v^*) &= \left(\begin{array}{cc}-\operatorname{diag}\;\mb \mu^s&\operatorname{diag} |-\mb\beta\odot\mb s^*+\mb\omega|\\
\mb 0& \operatorname{diag}( \mb\beta\odot\mb s^*-\mb\mu^i-\mb \omega)\end{array}\right) 
\end{align*}
and we see that the DFE is locally asymptotically stable if 
\begin{align*}
&s\left(\left(\begin{array}{cc}\mc T&\operatorname{diag} |-\mb\beta\odot\mb s^*+\mb\omega|\\
\mb 0&\operatorname{diag}( \mb\beta\odot\mb s^*-\mb\mu^i-\mb \omega)\end{array}\right)
\right) \\
&=
s\left(\left(\begin{array}{cc}-\mc M_d-\operatorname{diag}\,\mb \mu^s+\Phi_p^+(\mc J-\Phi_{w_2}^-\Phi^+_q)^{-1}\Phi_{w_1}^-\mc M_d&\operatorname{diag} |-\mb\beta\odot\mb s^*+\mb\omega|\\
\mb 0&\operatorname{diag}( \mb\beta\odot\mb s^*-\mb\mu^i-\mb \omega)\end{array}\right)
\right)<0.
\end{align*}
By the assumptions ensuring the existence of a positive $\mb s^*$, $s(\mc T)<0.$ Hence, the DFE is locally asymptotically stable provided 
$$
\frac{\beta_rs_r^*}{\mu_r^i+\omega_r}<1, \quad r\in I_v,
$$
that is, we can define the basic reproduction number
\begin{equation*}
     R_0 = \max\limits_{r\in I_v}\frac{\beta_rs_r^*}{\mu_r^i+\omega_r}.
\end{equation*}
The structure of $R_0$ is typical for SIS models with migration of only susceptible individuals, see, e.g., \cite[Eqn (3.15)]{SalvdD} or \cite[Theorem 3.8]{arino2009}, adjusted to the mass-action law for the force of infections, but the difference lies in $\mb s^*$, which encodes the migration patterns. 
\section{Conclusions}
In this paper, we studied a coupled system of linear transport equations and nonlinear ordinary differential equations. By careful estimates of the resolvent of the linear part of the problem, we proved the existence of a $C_0$-semigroup solving it in $L^1$, and, by showing that the $L^1$ semigroup leaves $L^p$ spaces $(1<p<\infty)$ invariant, we extended the well-posedness to them. Next, we proved several results on the asymptotics of the obtained semigroup when it is positive, and extended them to arbitrary semigroups. The latter played an essential role in the study of the stability of the stationary points of the full nonlinear version of \eqref{mod1}, where, typically, the Jacobian in the linearisation of the problem has entries of arbitrary sign. We illustrated our results on an epidemiological example. 

Further work on this problem involves the existence of a stationary distribution in the conservative linear case and the related question of whether the problem has the Asynchronous Exponential Growth property, i.e., a spectral gap. Another avenue of research is the extension of the problem to include nonlinearities in the transport equations and boundary operators, and, related to this, the study of more complex ecological, biological, and epidemiological problems in metapopulations. 
\section{Declarations  and acknowledgments}
\begin{enumerate}
\item \textbf{Funding.} The research was supported by the National Research Foundation of South Africa, grant 87720, and by the IMPRESS-U programme in the framework of the project
\#2023/05/Y/ST6/00263.
\item \textbf{Conflicts of Interest.} The authors declare that they have no conflict 
of interest.
\item \textbf{Ethical approval.} Not applicable.
\item \textbf{Data availability.} Data sharing not applicable to this article as no
datasets were generated or analysed during the current study.
\item \textbf{Acknowledgment.} The work began at the Workshop \& Training School of the Networking in Applied Network Theory (NANT) at the Mathematical Research and Conference Centre in B\c{e}dlewo, Poland, and concluded during J.Banasiak's fellowship at the Stellenbosch Institute for Advanced Studies. 
\end{enumerate}
\bibliographystyle{plain}
\def\cprime{$'$} \def\cprime{$'$} \def\cprime{$'$} \def\cprime{$'$}
  \def\cprime{$'$} \def\cprime{$'$} \def\cprime{$'$} \def\cprime{$'$}
  \def\cprime{$'$} \def\cprime{$'$}


\begin{thebibliography}{10}

\bibitem{Adams}
R.~A. Adams and J.~J.~F. Fournier.
\newblock {\em Sobolev spaces}, volume 140 of {\em Pure Appl. Math., Academic
  Press}.
\newblock New York, NY: Academic Press, 2nd ed. edition, 2003.

\bibitem{Arendt}
W.~Arendt.
\newblock Resolvent positive operators.
\newblock {\em Proc. London Math. Soc. (3)}, 54(2):321--349, 1987.

\bibitem{arino2009}
J.~Arino.
\newblock Diseases in metapopulations.
\newblock In {\em Modeling and dynamics of infectious diseases}, pages 64--122.
  World Scientific, 2009.

\bibitem{BanLAA}
J.~Banasiak.
\newblock Explicit formulae for limit periodic flows on networks.
\newblock {\em Linear Algebra Appl.}, 500:30--42, 2016.

\bibitem{banpop}
J.~Banasiak.
\newblock {\em Introduction to mathematical methods in population theory}.
\newblock Springer Undergrad. Math. Ser. Cham: Springer, 2025.

\bibitem{JBAB1}
J.~Banasiak and A.~B{\l}och.
\newblock Telegraph systems on networks and port-{Hamiltonians}. {I}.
  {Boundary} conditions and well-posedness.
\newblock {\em Evol. Equ. Control Theory}, 11(4):1331--1355, 2022.

\bibitem{BaFa}
J.~Banasiak and A.~Falkiewicz.
\newblock Some transport and diffusion processes on networks and their graph
  realizability.
\newblock {\em Appl. Math. Lett.}, 45:25--30, 2015.

\bibitem{BanRot}
J.~Banasiak and A.~Falkiewicz.
\newblock A singular limit for an age structured mutation problem.
\newblock {\em Math. Biosci. Eng.}, 14(1):17--30, 2017.

\bibitem{BFNM3AS}
J.~Banasiak, A.~Falkiewicz, and P.~Namayanja.
\newblock Asymptotic state lumping in transport and diffusion problems on
  networks with applications to population problems.
\newblock {\em Math. Models Methods Appl. Sci.}, 26(2):215--247, 2016.

\bibitem{BNNHM}
J.~Banasiak and P.~Namayanja.
\newblock Asymptotic behaviour of flows on reducible networks.
\newblock {\em Netw. Heterog. Media}, 9(2):197--216, 2014.

\bibitem{BPChapt}
J.~Banasiak and A.~Puchalska.
\newblock Transport on networks -- a playground of continuous and discrete
  mathematics in population dynamics.
\newblock In {\em Mathematics applied to engineering, modelling, and social
  issues}, pages 439--487. Cham: Springer, 2019.

\bibitem{Bang}
J.~Bang-Jensen and G.~Gutin.
\newblock {\em Digraphs. {Theory}, algorithms and applications.}
\newblock Springer Monogr. Math. London: Springer, 2nd ed. edition, 2009.

\bibitem{BaCo}
G.~Bastin and J.-M. Coron.
\newblock {\em Stability and boundary stabilization of 1-{D} hyperbolic
  systems}, volume~88 of {\em Prog. Nonlinear Differ. Equ. Appl.}
\newblock Basel: Birkh{\"a}user/Springer, 2016.

\bibitem{batkai2017positive}
A.~B{\'a}tkai, M.~Kramar~Fijav{\v{z}}, and Abdelaziz Rhandi.
\newblock {\em Positive Operator Semigroups: from Finite to Infinite
  Dimensions}, volume 257.
\newblock Birkh{\"a}user, 2017.

\bibitem{BatPia}
A.~B{\'a}tkai and S.~Piazzera.
\newblock {\em Semigroups for delay equations}, volume~10 of {\em Res. Notes
  Math.}
\newblock Wellesley, MA: A K Peters, 2005.

\bibitem{Belair}
J.~B{\'e}lair, M.~C. Mackey, and J.~M. Mahaffy.
\newblock Age-structured and two-delay models for erythropoiesis.
\newblock {\em Math. Biosci.}, 128(1-2):317--346, 1995.

\bibitem{BatKraRh}
A.~Bátkai, M.~Kramar~Fijavž, and A.~Rhandi.
\newblock Abstract boundary delay systems and application to network flow.
\newblock {\em Mathematical Methods in the Applied Sciences}, 49(1):119--129,
  2026.

\bibitem{Copp}
W.~A. Coppel.
\newblock {\em Stability and asymptotic behavior of differential equations}.
\newblock D. C. Heath and Company, Boston, MA, 1965.

\bibitem{KraPhysD}
B.~Dorn, M.~Kramar~Fijav{\v{z}}, R.~Nagel, and A.~Radl.
\newblock The semigroup approach to transport processes in networks.
\newblock {\em Physica D}, 239(15):1416--1421, 2010.

\bibitem{DoumicMC}
M.~Doumic, A.~Marciniak-Czochra, B.~Perthame, and J.~P. Zubelli.
\newblock A structured population model of cell differentiation.
\newblock {\em SIAM J. Appl. Math.}, 71(6):1918--1940, 2011.

\bibitem{ENshort}
K.-J. Engel and R.~Nagel.
\newblock {\em A short course on operator semigroups}.
\newblock Universitext. Springer, New York, 2006.

\bibitem{Gant}
F.~R. Gantmacher.
\newblock Applications of the theory of matrices.
\newblock New {York}-{London}: {Interscience} {Publishers}. {IX}, 317 p.
  (1959)., 1959.

\bibitem{HaddRhandi}
S.~Hadd, R.~Manzo, and A.~Rhandi.
\newblock Unbounded perturbations of the generator domain.
\newblock {\em Discrete Contin. Dyn. Syst.}, 35(2):703--723, 2015.

\bibitem{bein}
R.~L. Hemminger and L.~W. Beineke.
\newblock Line graphs and line digraphs.
\newblock Selected topics in graph theory, 271-305 (1978)., 1978.

\bibitem{KamSal}
J.~C. Kamgang and G.~Sallet.
\newblock Computation of threshold conditions for epidemiological models and
  global stability of the disease-free equilibrium ({DFE}).
\newblock {\em Math. Biosci.}, 213(1):1--12, 2008.

\bibitem{KSMZ}
M.~Kramar and E.~Sikolya.
\newblock Spectral properties and asymptotic periodicity of flows in networks.
\newblock {\em Math. Z.}, 249(1):139--162, 2005.

\bibitem{KrPuch}
M.~Kramar~Fijav{\v{z}} and A.~Puchalska.
\newblock Semigroups for dynamical processes on metric graphs.
\newblock {\em Philos. Trans. R. Soc. Lond., A, Math. Phys. Eng. Sci.},
  378(2185):17, 2020.
\newblock Id/No 20190619.

\bibitem{Matrai}
T.~M{\'a}trai and E.~Sikola.
\newblock Asymptotic behavior of flows in networks.
\newblock {\em Forum Math.}, 19(3):429--461, 2007.

\bibitem{Mugnolo}
D.~Mugnolo.
\newblock {\em Semigroup methods for evolution equations on networks}.
\newblock Underst. Complex Syst. Cham: Springer, 2014.

\bibitem{Nic}
G.~Nickel.
\newblock A semigroup approach to dynamic boundary value problems.
\newblock {\em Semigroup Forum}, 69(2):159--183, 2004.

\bibitem{Ortega}
J.~M. Ortega.
\newblock {\em Numerical analysis}, volume~3 of {\em Classics in Applied
  Mathematics}.
\newblock Society for Industrial and Applied Mathematics (SIAM), Philadelphia,
  PA, second edition, 1990.

\bibitem{Pa}
A.~Pazy.
\newblock {\em Semigroups of linear operators and applications to partial
  differential equations}, volume~44 of {\em Applied Mathematical Sciences}.
\newblock Springer-Verlag, New York, 1983.

\bibitem{Pucharx}
A.~Puchalska, M.~N.~Cartier van Dissel, P.~Gora, M.~Iskrzy{\'n}ski, M.~Kramar
  Fijav{\v{z}}, D.~Manea, A.~Mauroy, I.~Naki{\'c}, S.~Nicaise, M.~B.
  Paradowski, G.~Rotundo, and E.~Sikolya.
\newblock Journey {Through} the {World} of {Dynamical} {Systems} on {Networks}.
\newblock Preprint, {arXiv}:2512.17571 [math.{DS}] (2025), 2025.

\bibitem{Rot}
M.~Rotenberg.
\newblock Transport theory for growing cell populations.
\newblock {\em J. Theoret. Biol.}, 103(2):181--199, 1983.

\bibitem{SalvdD}
M.~Salmani and P.~van~den Driessche.
\newblock A model for disease transmission in a patchy environment.
\newblock {\em Discrete Contin. Dyn. Syst., Ser. B}, 6(1):185--202, 2006.

\bibitem{Sik}
E.~Sikolya.
\newblock Flows in networks with dynamic ramification nodes.
\newblock {\em J. Evol. Equ.}, 5(3):441--463, 2005.

\bibitem{Staff}
O.~J. Staffans.
\newblock {\em Well-posed linear systems}, volume 103 of {\em Encycl. Math.
  Appl.}
\newblock Cambridge: Cambridge University Press, 2005.

\bibitem{Varga}
R.~S. Varga.
\newblock {\em Matrix iterative analysis.}, volume~27 of {\em Springer Ser.
  Comput. Math.}
\newblock Berlin: Springer, 2nd revised and expanded edition, 2000.

\bibitem{Webb}
G.~F. Webb.
\newblock {\em Theory of nonlinear age-dependent population dynamics},
  volume~89 of {\em Pure Appl. Math., Marcel Dekker}.
\newblock Marcel Dekker, Inc., New York, NY, 1985.

\bibitem{Weiss}
G.~Weiss.
\newblock Regular linear systems with feedback.
\newblock {\em Math. Control Signals Syst.}, 7(1):23--57, 1994.

\bibitem{Zwart}
H.~Zwart, Y.~Le~Gorrec, B.~Maschke, and J.~Villegas.
\newblock Well-posedness and regularity of hyperbolic boundary control systems
  on a one-dimensional spatial domain.
\newblock {\em ESAIM, Control Optim. Calc. Var.}, 16(4):1077--1093, 2010.

\end{thebibliography}
\end{document}